\providecommand{\N}{\mathbb{N}}
\providecommand{\Z}{\mathbb{Z}}
\providecommand{\R}{\mathbb{R}}
\providecommand{\C}{\mathbb{C}}
\providecommand{\E}{\mathbb{E}}
\providecommand{\P}{\mathcal{P}}
\providecommand{\T}{\mathbb{T}}
\providecommand{\CC}{\mathscr{C}}
\providecommand{\D}{\mathbb{D}}
\providecommand{\M}{\mathcal{M}}
\providecommand{\leqsim}{\lesssim}
\providecommand{\geqsim}{\gtrsim}
\providecommand{\F}{\mathcal{F}}
\providecommand{\I}{\mathbb{I}}
\renewcommand{\P}{\mathcal{P}}
\renewcommand{\H}{\mathcal{H}}
\renewcommand{\:}{\colon}
\numberwithin{equation}{section}
\newtheorem{theorem}{Theorem}[section]
\newtheorem{proposition}[theorem]{Proposition}
\newtheorem{corollary}[theorem]{Corollary}
\newtheorem{lemma}[theorem]{Lemma}
\theoremstyle{definition}
\theoremstyle{remark}
\title{Pointwise Convergence of Ergodic Averages Along Hardy Field Sequences}
\author{Maximilian O'Keeffe}
\date{}
\begin{document}
\maketitle

\begin{abstract}
Let $(X,\mu)$ be an arbitrary measure space equipped with a family of pairwise commuting measure preserving transformations $T_1, \dotsc, T_m$. We prove that the ergodic averages \[ A_{N;X}^{P_1, \dotsc, P_m}f = \frac{1}{N} \sum_{n=1}^N T_1^{\lfloor P_1(n) \rfloor} \dotsm T_m^{\lfloor P_m(n) \rfloor} f \] converge pointwise $\mu$-almost everywhere as $N \to \infty$ for any $f \in L^p(X)$ with $p>1$, where $P_1, \dotsc, P_m$ are Hardy field functions which are ``non-polynomial" and have distinct growth rates. To establish pointwise convergence we will prove a long-variational inequality, which will in turn prove that a maximal inequality holds for our averages.

Additionally, by restricting the class of Hardy field functions to those with the same growth rate as $t^c$ for $c>0$ non-integer, we also prove full variational estimates. We are therefore able to provide quantitative bounds on the rate of convergence of exponential sums of the form \[ \frac{1}{N} \sum_{n=1}^N e(\xi_1 \lfloor n^{c_1} \rfloor + \dotsb + \lfloor n^{c_m} \rfloor) \] where $0<c_1<\dotsb<c_m$ are non-integer.
\end{abstract}

\tableofcontents

\section{Introduction}

\subsection{Pointwise Convergence of Ergodic Averages} Consider a measure space $(X,\mu)$ which we allow to be $\sigma$-finite. Suppose $T_1, \dotsc, T_m\:X \to X$ are invertible, bi-measurable, commuting (in the sense that $T_i \circ T_j = T_j \circ T_i$ for all $i,j \in \{1, \dotsc, m\}$) and measure preserving preserving, so that $\mu(T_i^{-1}A) = \mu(A)$ for all measurable sets $A$. From now on we will denote $T_i \circ T_j$ by $T_iT_j$.

The study of pointwise convergence of ergodic averages began with Birkhoff \cite{Birkhoff89} who proved that the averages \[ \frac{1}{N} \sum_{n=1}^N f(T^nx) \] converge almost everywhere for any $f \in L^1(X)$. Furstenberg's ergodic theoretic proof of Szemer\'edi's theorem \cite{furstenberg1977ergodic} led to the study of other, more general, ergodic averages such as those of the form \begin{equation} \label{eqn:ergodicaverages} \frac{1}{N} \sum_{n=1}^N f \left( T_1^{P_1(n)} \dotsm T_m^{P_m(n)}x \right) \end{equation} where $P_1,\dotsc,P_m\:\Z \to \Z$ are integer sequences. One example of a class of sequences is to take the $P_i$ to be polynomials with integer coefficients. In the case $m=1$ Bourgain established the pointwise convergence of (\ref{eqn:ergodicaverages}) for any $f \in L^p(X)$ with $p>1$ by proving an oscillation estimate \cite{bourgain1988maximal,bourgain1988pointwise,Bourgain89}. More recently, Ionescu, Magyar, Mirek, and Szarek \cite{ionescu2023polynomial} proved that (\ref{eqn:ergodicaverages}) converge almost everywhere for any $f \in L^p(X)$ with $p>1$ for arbitrary $m \in \N$, any polynomials $P_1,\dotsc,P_m \in \Z[n]$ each having zero constant term, and for any $T_1, \dotsc, T_m$ which do not even have to be commuting (but which generate a two-step nilpotent group).

Another direction one can take is to let $P_1,\dotsc,P_m$ be real-valued sequences and to consider the averages \begin{equation} \label{eqn:floorergodicaverages} A_{N;X}^{P_1,\dotsc,P_m}f(x) = \frac{1}{N} \sum_{n=1}^N f \left( T_1^{\lfloor P_1(n) \rfloor} \dotsm T_m^{\lfloor P_m(n) \rfloor} x \right) \end{equation} Here a common example is to take $P_1,\dotsc,P_m$ from a \emph{Hardy field}. See Subsection \ref{subsec:hardyfield} for a precise definition but informally these are smooth functions such that they, and all their derivatives, are eventually monotonic---one can informally think of these as functions that look like $t^c$ for some $c>0$. This line of investigation began with Wierdl \cite{wierdl1989almost}, who established the pointwise convergence of (\ref{eqn:floorergodicaverages}) in the case $m=1$ and $P(t)=t^c$ where $c>1$ is non-integer.

In this paper we will take $P_1,\dotsc,P_m$ to be Hardy field functions such that the collection satisfies various properties which we will describe in Subsection \ref{subsec:hardyfield}. We will denote the set of such collections by $\P$. We give an informal outline what it means for $\{P_1,\dotsc,P_m\}$ to be in $\P$. It is well known that averages (\ref{eqn:ergodicaverages}) and (\ref{eqn:floorergodicaverages}) may fail to converge almost everywhere if $P_1$, say, is a function such as an exponential (which grows faster than any polynomial) or a logarithm (which grows slower than any polynomial). We therefore require $P_1, \dotsc, P_m$ to have polynomial growth. We will also require that $P_1,\dotsc,P_m$ are non-polynomial, so that the growth rates are non-integer. Finally, we will impose that $P_1,\dotsc,P_m$ have distinct growth rates.

Two examples of collections that belong in $\P$ to consider are $\{ t^{c_1},\dotsc,t^{c_m}\}$, where $c_1,\dotsc,c_m$ are positive, distinct and non-integer and $\{t^{a_1}(\log t)^{b_1},\dotsc,t^{a_m} (\log t)^{b_m}\}$ where $a_1,\dotsc,a_m$ are positive and distinct (but may be integer) and $b_1,\dotsc,b_m$ are non-zero reals.

\subsection{Statement of Results}

Our main theorem can be stated as follows.

\begin{theorem} \label{thm:mainthm}
Suppose $\{P_1,\dotsc,P_m\} \in \P$. Then the averages $A_{N;X}^{P_1,\dotsc,P_m}f$ converge pointwise almost everywhere as $N \to \infty$ for all $f \in L^p(X)$ with $1<p<\infty$.
\end{theorem}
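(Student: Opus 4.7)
The plan is to execute the strategy announced in the abstract: establish a long-variational inequality along a lacunary sequence of scales, which simultaneously yields a strong maximal inequality and, combined with convergence on a dense class, forces pointwise convergence on $L^p$ for every $1<p<\infty$. As a first step, I would invoke the Calder\'on transference principle to reduce the problem from an abstract measure-preserving system to the integer-shift model on $\Z^m$; here the averages become convolution operators whose Fourier multipliers are the exponential sums
\[ m_N(\xi) = \frac{1}{N} \sum_{n=1}^N e\bigl(\xi_1 \lfloor P_1(n) \rfloor + \dotsb + \xi_m \lfloor P_m(n) \rfloor\bigr), \qquad \xi \in \T^m. \]
Proving an $L^p(\Z^m)$ long $r$-variation estimate along the lacunary scales $\{2^k\}_{k \geq 0}$, for some fixed $r \in (2,\infty)$, is the central analytic task.

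The second step is a Hardy--Littlewood circle-method decomposition of $m_N$: partition $\T^m$ into major and minor arcs at scale $N^{\sigma}$ for small $\sigma > 0$, and treat the two regions separately. On the minor arcs I would deploy exponential-sum estimates adapted to Hardy fields, using van der Corput's $k$-th derivative test together with the non-polynomiality and distinct-growth-rate hypotheses baked into $\P$. These force any nontrivial linear combination $\xi_1 P_1 + \dotsb + \xi_m P_m$ to have derivatives that, on intervals of length $N$, stay quantitatively away from both rationals and polynomials, producing a uniform power saving $\sup_{\xi \in \mathrm{minor}} |m_N(\xi)| \lesssim N^{-\delta}$. A standard Rademacher--Menshov square-function argument then controls the contribution of the minor-arc piece to the $r$-variation norm.

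The third step is the major-arc analysis. Around each rational $a/q$ with $q$ small one approximates $m_N(\xi)$ by a product of an arithmetic factor (a Gauss-type sum in $a/q$) and a continuous oscillatory integral of the form $\frac{1}{N}\int_0^N e(\theta_1 P_1(t) + \dotsb + \theta_m P_m(t))\,dt$, which is essentially the multiplier of a classical continuous averaging operator on $\R^m$. For the latter, long-variational estimates are available through the Jones--Seeger--Wright framework and its descendants; transferring these bounds to the discrete setting and stitching together the individual major arcs via a Littlewood--Paley-type projection will yield the desired $L^p$ bound for the major-arc piece. Combining the two regimes gives the long $r$-variation inequality, the short variation over each dyadic block $[2^k, 2^{k+1}]$ is handled by a trivial square function, and the full maximal inequality drops out. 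Pointwise convergence then follows by a standard density argument from a class on which convergence can be verified by hand (for instance, via $L^2$-spectral methods applied to simple functions).

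The main obstacle I anticipate is the major-arc analysis, precisely because Hardy field functions lack the exact algebraic structure that polynomials enjoy: the factorization into a rational phase times a smooth integral is only asymptotic, and the error terms depend sensitively on the growth exponents of the $P_i$. Controlling these errors uniformly across all lacunary scales, while keeping the estimates compatible with the $r$-variation norm, will require a delicate two-scale argument---exploiting the fact that on short subintervals each $P_i$ is very well approximated by a Taylor polynomial of appropriate degree, whereas on the long scale non-polynomiality suppresses arithmetic resonances. Reconciling these two regimes in a single estimate is, I expect, the technical heart of the proof.
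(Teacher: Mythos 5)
Your high-level plan is broadly consonant with the paper's: transfer to $\Z^m$ via Calder\'on, prove a long $r$-variation inequality along lacunary scales, and deduce the maximal inequality and pointwise convergence. The van der Corput analysis and the passage to a continuous oscillatory integral of the form $\frac{1}{N}\int e(\xi_1 P_1(t)+\dotsb+\xi_m P_m(t))\,dt$ are also in the right spirit. But your description of the major arcs is wrong, and this is not a cosmetic error.

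You propose to decompose $\T^m$ into major arcs \emph{around each rational $a/q$ with $q$ small}, and to extract an arithmetic ``Gauss-type sum'' factor as in the polynomial circle method. This is the structure for polynomial sequences such as $n^2$, where $\sum_n e(\xi n^2)$ resonates near every rational with small denominator. For Hardy field sequences $\lfloor P_i(n)\rfloor$ with $P_i$ non-polynomial, this structure does not appear: the paper's Lemma \ref{lem:exponentialsum} shows that if $|m_{N;\Z}(\xi)|\geq\delta$ and $N$ is large then $\|\xi_i\|_\T\lesssim\delta^{-O(1)}|P_i(N)|^{-1}$ for every $i$---that is, the multiplier is large only on a single small box centred at the origin (mod $\Z^m$). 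There are no arithmetic factors, no Gauss sums, and no sum over $a/q$ to stitch together. Indeed the paper explicitly flags this as the principal advantage of working with Hardy fields. Proving this single-major-arc fact is the real technical heart of the argument (via Theorem \ref{thm:removefloor} to remove the floor functions and Lemma \ref{lem:exceptionalset} to control sign cancellations among the $P_i^{(j)}$), and your outline skips it entirely by importing the polynomial template.

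Two smaller points. First, for pointwise convergence you only need the long variation over a lacunary set; the short-variation square function you mention is relevant to the full variational estimate (Theorem \ref{thm:integerfullvariation}) but is not needed for Theorem \ref{thm:mainthm}. Second, the minor-arc contribution does not require a Rademacher--Menshov argument: once the single-scale minor-arc estimate gives decay $\lesssim(\log N)^{-O(C_0)}$, the lacunarity of $\D$ lets one simply sum the $\ell^p$ operator norms, using the elementary bound $V^r(a_N)\lesssim\sum_N|a_N|$.
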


To prove Theorem \ref{thm:mainthm} we will use a method employed by Bourgain in \cite{bourgain1988pointwise} and by Ionescu, Magyar Mirek, Szarek \cite{ionescu2023polynomial}. Indeed, the key observation is that qualitative behaviour such as almost everywhere pointwise convergence follows from certain quantitative estimates. In this paper we will prove a so-called long-variational inequality. For a sequence $(a_N)_{N \in \N}$ of complex numbers and $0<r<\infty$ we define the $r$-variation norm of $(a_N)$ by \begin{equation} V^r(a_N)_{N \in \N} = \sup_{N \in \N} |a_N| + \sup_{J \in \N} \sup_{\substack{N_0 \leq \dotsb \leq N_J \\ N_j \in \N}} \left( \sum_{j=0}^{J-1} |a_{N_{j+1}}-a_{N_j}|^r \right)^\frac{1}{r}. \end{equation} Note we can replace the set $\N$ in this definition with any other set $\D$ (so that, for example, the last supremum is taken over $N_j \in \D$). If $r \geq 1$ and $\D$ is a finite set then $V^r$ is a genuine norm on the space of sequences $(a_N)_{N \in \D}$.

The $r$-variation norms are useful in the context of pointwise convergence for the following reason. It is easily seen that if $V^r(a_N)_{N \in \N}<\infty$ then $(a_N)$ is a Cauchy sequence and hence converges. It therefore suffices to prove that $V^r(A_{N;X}^{P_1,\dotsc,P_m}f(x))_{N \in \N}<\infty$ for almost every $x \in X$ for some $r \geq 1$. This can be achieved by proving that the $L^p(X)$ norm of the function \[ x \mapsto V^r(A_{N;X}^{P_1,\dotsc,P_m}f(x))_{N \in \N} \] is finite. Thus it would suffice to prove an estimate of the form \[ \|V^r(A_{N;X}^{P_1,\dotsc,P_m}f)_{N \in \N} \|_{L^p(X)} \leqsim_{P_1,\dotsc,P_m,m,p,r} \|f\|_{L^p(X)}. \] The additional benefit of a variational estimate of this form is that one automatically obtains a maximal inequality and one can deduce norm convergence. This remains true if one replaces $\N$ with a lacunary set $\D$ (so there exists $\lambda>1$ such that $\frac{N_2}{N_1}>\lambda$ for all $N_1,N_2 \in \D$ with $N_1<N_2$) so long as the implicit constants do not depend on $\D$. An inequality of the form \[ \|V^r(A_{N;X}^{P_1,\dotsc,P_m}f)_{N \in \D} \|_{L^p(X)} \leqsim_{P_1,\dotsc,P_m,m,p,r,\lambda} \|f\|_{L^p(X)} \] for all $\lambda$-lacunary sets $\D$ is known as a long-variational inequality. Thus Theorem \ref{thm:mainthm} actually follows from the following.

\begin{proposition} \label{prop:variationalineq}
Suppose $\{P_1,\dotsc,P_m\} \in \P$ and let $f \in L^p(X)$ with $p>1$. Then for any $r>2$ and $\lambda>1$ we have the long-variational inequality \begin{equation} \label{eqn:variationalineq} \|V^r(A_{N;X}^{P_1,\dotsc,P_m}f)_{N \in \D} \|_{L^p(X)} \leqsim_{P_1,\dotsc,P_m,m,p,r,\lambda} \|f\|_{L^p(X)} \end{equation} whenever $\D \subseteq \N$ is $\lambda$-lacunary.
\end{proposition}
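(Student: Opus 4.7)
The plan is to combine the Calder\'on transference principle with a Fourier-analytic decomposition of the averaging multiplier and a Rademacher-Menshov-type variational inequality. First I would reduce to the model case where $X = \Z^m$ and $T_i$ is the shift in the $i$th coordinate; by the standard transference argument, the long-variational inequality for an arbitrary commuting measure-preserving system follows from the inequality on this model, with the same constants. On $\Z^m$ the operator $A_{N;X}^{P_1,\dotsc,P_m}$ is convolution with the probability kernel $K_N(n) = N^{-1}\sum_{k=1}^N \1_{n = (\lfloor P_1(k)\rfloor,\,\dotsc,\,\lfloor P_m(k)\rfloor)}$, and the task becomes to bound the long variation of the convolution operators $f \mapsto K_N \ast f$ on $\ell^p(\Z^m)$, equivalently to control the associated Fourier multipliers
\[ \widehat{K_N}(\xi) = \frac{1}{N}\sum_{k=1}^N e\bigl(\xi_1 \lfloor P_1(k)\rfloor + \dotsb + \xi_m \lfloor P_m(k)\rfloor\bigr), \qquad \xi \in \T^m. \]

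The second step would be to establish quantitative decay of $\widehat{K_N}(\xi)$ off a small neighbourhood of the origin. After absorbing the floor functions into a harmless error (each $\lfloor P_i(k)\rfloor - P_i(k)$ is bounded and partial summation turns the discrepancy into an $O(N^{-1})$ perturbation of the smooth phase), the sum reduces to $N^{-1}\sum_k e(\xi \cdot P(k))$ with Hardy-field phase $\varphi_\xi(t) = \sum_i \xi_i P_i(t)$. The hypothesis that $\{P_1,\dotsc,P_m\} \in \P$ forces a unique $P_i$ to dominate $\varphi_\xi$ on $[1,N]$ for each nonzero $\xi$, so some derivative $\varphi_\xi^{(k)}(t)$ has controlled size and definite sign on a substantial subinterval. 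Iterated van der Corput then should yield a uniform Weyl-type bound $|\widehat{K_N}(\xi)| \lesssim N^{-\delta}$ for $\xi$ outside a shrinking low-frequency box, complemented by smooth behaviour near the origin.

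With the multiplier estimate in hand, I would assemble the variational bound as follows. Via Plancherel, the multiplier decay gives $L^2$ control of the dyadic differences $A_{N_{j+1}}^{P_1,\dotsc,P_m} - A_{N_j}^{P_1,\dotsc,P_m}$ along $\D$; interpolation against the trivial bound $\|K_N\|_{\ell^1} = 1$ then gives $L^p$ control for $1 < p < \infty$. The low-frequency piece of $\widehat{K_N}$ is handled by comparison with a continuous smooth average whose long variation is classical, while the high-frequency piece is summed dyadically using a Rademacher-Menshov inequality---the hypothesis $r > 2$ is essential here to absorb the logarithmic loss inherent in that inequality. The $\lambda$-lacunarity of $\D$ is used to ensure that within any scale window $[T,2T]$ there are only $O_\lambda(1)$ elements of $\D$, keeping the final bound independent of $\D$.

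The main obstacle, to my mind, is the uniform Weyl-type decay in the second step. The assertion that a single $P_i$ dominates $\varphi_\xi$ is initially heuristic, and making it quantitative and uniform in $\xi$ requires pigeonholing $\xi$ into regimes according to which $|\xi_i| N^{c_i}$ is largest, verifying a lower bound on some derivative of $\varphi_\xi$ on a nontrivial subinterval, and then applying van der Corput the correct number of times---all using only the Hardy-field axioms. Unlike the polynomial case there is no rational-approximation structure on $\xi$ to exploit, so the usual minor/major-arc dichotomy degenerates, and the whole argument really hangs on a genuinely uniform oscillatory-integral estimate. Once that estimate is secured, passage to the long-variational inequality should follow by now-standard techniques in the spirit of Bourgain \cite{bourgain1988pointwise} and of Ionescu, Magyar, Mirek and Szarek \cite{ionescu2023polynomial}.
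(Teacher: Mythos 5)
Your overall architecture---transference to $\Z^m$, reduction to a Fourier multiplier problem, a major/minor-arc dichotomy driven by van der Corput decay, and a variational assembly that treats low and high frequencies separately---is the same architecture the paper uses, and you correctly identify the uniform oscillatory estimate as the crux. But there are two concrete problems.

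The serious gap is in the floor-removal step. You assert that since $\lfloor P_i(k)\rfloor - P_i(k)$ is bounded, partial summation turns the discrepancy into an $O(N^{-1})$ perturbation of the smooth phase. This is false. Writing $e(\xi_i\lfloor P_i(k)\rfloor) = e(\xi_i P_i(k))\,e(-\xi_i\{P_i(k)\})$, the factor $g(k) = e(-\xi_i\{P_i(k)\})$ is not of bounded variation on $[1,N]$: it jumps by $O(1)$ every time $P_i$ crosses an integer, and there are $\sim |P_i(N)|$ such crossings. When $\tau(P_i)>1$ the increment $P_i(k+1)-P_i(k) \sim P_i(N)/N > 1$, so $g$ jumps essentially at every $k$ and carries no monotonicity that partial summation could exploit. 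Even when $0<\tau(P_i)<1$ the total variation of $g$ is $\sim |P_i(N)| \to \infty$, and partial summation only yields a bound of size $|P_i(N)| \cdot \sup_k|\sum_{j\le k}e(\xi\cdot P(j))|$, which is hopeless without already having strong cancellation in those partial sums. The paper instead expands $e(-\xi\{x\})$ in its Fourier series, which introduces shifted frequencies $\xi_i + k_i$ and an error term $O(\log K \min(1,(K\|x\|_\T)^{-1}))$; controlling that error requires a \emph{separate} application of van der Corput to the equidistribution sums $\E_{n\in[N]}e(kP_i(n))$, after which a pigeonhole recovers a single shifted phase. So floor removal and van der Corput are not sequential independent steps as your sketch suggests; the removal itself costs a van der Corput bound.

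A lesser issue is the assembly. You attribute the $r>2$ constraint to a Rademacher--Menshov-type logarithmic loss, but the paper does not invoke Rademacher--Menshov at all: the minor-arc (high-frequency) piece is handled by the single-scale $\ell^2$ decay interpolated to $\ell^p$, summed directly over the $\lambda$-lacunary set using the $(\log N)^{-10}$ gain (this is where lacunarity enters, not in counting scales per window); the major-arc piece is further decomposed into Littlewood--Paley annuli $\Phi_{N,l}$, the discrete multiplier is swapped for its continuous analogue $m_{N;\R}$, and the final bound rests on a Calder\'on--Zygmund weak-$(1,1)$ argument together with the variational inequality of Jones--Seeger--Wright for smooth approximate identities---it is the latter (via L\'epingle's inequality) that genuinely imposes $r>2$. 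You also correctly flag, but leave open, the need for a quantitative ``one $P_i$ dominates the phase'' statement; the paper's resolution (Lemma 4.5) is an exceptional-set argument showing the set of $t\in[\tfrac{N}{2},N]$ on which cancellation between the $|P_i^{(j)}|$ degrades the $j$-th derivative has small measure, which is a genuinely new ingredient your outline does not supply.
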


By the Calder\'on transference principle \cite{calderon1968ergodic} it suffices to prove the inequality (\ref{eqn:variationalineq}) in the case $X=\Z^m$, $\mu = \mu_{\Z^m}$ is the counting measure, and $T_i x = x-e_i$ for all $x \in \Z^m$, where $e_i$ is the $i$\textsuperscript{th} basis vector of $\R^m$. In this case we will denote $A_{N;\Z^m}^{P_1,\dotsc,P_m}$ by $A_N^{P_1,\dotsc,P_m}$. Note we have \[ A_N^{P_1,\dotsc,P_m}f(x) = \frac{1}{N} \sum_{n=1}^N f(x-(\lfloor P_1(n) \rfloor,\dotsc,\lfloor P_m(n) \rfloor)) \] for all $x \in \Z^m$. We can additionally consider the so-called upper half operator \[ \Tilde{A}_N^{P_1,\dotsc,P_m}f(x) = \frac{1}{N} \sum_{n=1}^N 1_{n>\frac{N}{2}} f(x-(\lfloor P_1(n) \rfloor,\dotsc,\lfloor P_m(n) \rfloor)). \] Finally, if we can prove that (\ref{eqn:variationalineq}) holds for all $\lambda$-lacunary subsets of $\N$ which are finite (with the implied constants independent of $\D$ and the cardinality of $\D$) then one also obtains (\ref{eqn:variationalineq}). Thus Proposition \ref{prop:variationalineq} is reduced to proving the following theorem.

\begin{theorem} \label{thm:integervariation}
Suppose $\{P_1,\dotsc,P_m\} \in \P$ and let $f \in \ell^p(\Z^m)$ with $p>1$. Then for any $r>2$ and $\lambda>1$ we have the long-variational inequality \begin{equation} \label{eqn:integervariation} \| V^r(\Tilde{A}_N^{P_1,\dotsc,P_m}f)_{N \in \D} \|_{\ell^p(\Z^m)} \leqsim_{P_1,\dotsc,P_m,p,r,\lambda} \|f\|_{\ell^p(\Z^m)} \end{equation} for all finite $\lambda$-lacunary subsets $\D \subseteq \N$.
\end{theorem}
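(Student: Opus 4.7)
The plan is to Fourier-analyze the convolution $\Tilde{A}_N^{P_1,\dotsc,P_m} f = K_N \ast f$ on $\Z^m$, where the probability measure
\[
K_N = \frac{1}{N} \sum_{N/2 < n \leq N} \delta_{(\lfloor P_1(n) \rfloor, \dotsc, \lfloor P_m(n) \rfloor)}
\]
has Fourier multiplier $m_N(\xi) = \widehat{K_N}(\xi)$, a Hardy-field exponential sum on $\T^m$. Writing $\D = \{M_0 < M_1 < \dotsb\}$ with $M_{i+1}/M_i \geq \lambda$, a Rademacher--Menshov / Jones--Seeger--Wright dyadic decomposition of the long $r$-variation for $r > 2$ reduces the estimate to two quantitative ingredients: (i) an $\ell^p$ maximal inequality $\|\sup_N |\Tilde{A}_N f|\|_{\ell^p} \lesssim \|f\|_{\ell^p}$, and (ii) a lacunary square function bound $\bigl\|\bigl(\sum_i |\Tilde{A}_{M_{i+1}} f - \Tilde{A}_{M_i} f|^2\bigr)^{1/2}\bigr\|_{\ell^p} \lesssim \|f\|_{\ell^p}$. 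The assumption $r > 2$ is what provides summability across dyadic generations in this reduction.

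I would then split $m_N = L_N + E_N$, where
\[
L_N(\xi) = \frac{1}{N} \int_{N/2}^N e\bigl(-\xi_1 P_1(t) - \dotsb - \xi_m P_m(t)\bigr)\, dt
\]
is the continuous-type main term obtained by dropping the floors and replacing the Riemann sum with an integral, and $E_N$ is the error. The multipliers $L_N$ correspond, after transference from $\R^m$, to convolution averages against the image measure of the Hardy curve $t \mapsto (P_1(t), \dotsc, P_m(t))$, and the required long $\ell^p$-variational inequality for the $L_N$ is available by Jones--Seeger--Wright / Lépingle-type arguments applied to the natural dilation structure of such a curve, since its components have polynomial-like growth of distinct rates. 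This handles the main-term contribution to (i) and (ii) directly.

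For the error $E_N$, the crucial input is a uniform decay estimate $\|E_N\|_{L^\infty(\T^m)} \lesssim N^{-\delta}$ for some $\delta = \delta(P_1, \dotsc, P_m) > 0$. This follows from iterated van der Corput differencing applied to the multi-parameter phase $\xi \cdot P(t) = \xi_1 P_1(t) + \dotsb + \xi_m P_m(t)$: the hypothesis $\{P_1, \dotsc, P_m\} \in \P$ ensures, through the non-polynomial and distinct-growth conditions, that some iterated derivative $\frac{d^k}{dt^k}(\xi \cdot P(t))$ is monotone on $[N/2, N]$ and of controlled size uniformly in $\xi \in \T^m$, precluding cross-cancellation between the individual phases $\xi_i P_i(t)$. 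Plancherel promotes this into an $\ell^2$ operator norm bound of $O(N^{-\delta})$ for the error piece, and interpolation with the trivial $\ell^1$ and $\ell^\infty$ bounds gives $O(N^{-\delta(p)})$ decay on $\ell^p$ for every $1 < p < \infty$. Summing over the $\lambda$-lacunary scales $\D$ then produces a geometric series, so the $E_N$ contribution is harmless for both the maximal and square function estimates.

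The chief obstacle lies in establishing this uniform minor-arc bound in the presence of the floor functions $\lfloor P_i(n) \rfloor$, which are non-smooth and therefore not directly amenable to van der Corput. Removing the floors requires a Fejér-kernel smoothing that introduces additional correction terms involving derivatives of the phase, and one must verify that these remain uniformly controlled on $\T^m$. Crucially, because the growth rates of the $P_i$ are non-integer, there is no major-arc regime where $\xi \cdot P(t)$ can resonate with a rational frequency to destroy cancellation, so minor-arc methods suffice everywhere on $\T^m$ -- this is the decisive role played by the hypothesis $\{P_1,\dotsc,P_m\}\in\P$. A secondary delicacy is arranging the van der Corput differencing so that at each stage the derived phase still has polynomial-like growth with distinct rates, closing the induction. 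Finally, the threshold $r > 2$ is intrinsic to the Lépingle--Jones--Seeger--Wright mechanism, which is precisely why the theorem delivers long rather than full variations at this level of generality.
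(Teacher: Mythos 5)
Your high-level plan shares some features with the paper (a Fourier-side analysis, passing from the exponential sum $m_N$ to the continuous oscillatory integral $L_N = m_{N;\R}$, and van der Corput as the engine behind the minor-arc decay), but there are two structural gaps that prevent the argument from closing.

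First and most seriously, the reduction you propose in the opening paragraph --- that a Rademacher--Menshov / Jones--Seeger--Wright decomposition reduces the long $r$-variation, $r>2$, to (i) a maximal inequality and (ii) a lacunary square function bound --- is not a valid implication. Variation seminorms are strictly stronger than both the maximal function and the square function of consecutive lacunary differences; indeed, the whole point of L\'epingle-type inequalities is that $V^r$ carries genuinely more information. (For Birkhoff averages $\frac{1}{N}\sum_{n\le N} T^n f$, for instance, the maximal inequality and the lacunary square function bound are classical, yet the $V^r$ estimate for $r>2$ was a nontrivial theorem of Bourgain.) The ``$r>2$ gives summability across dyadic generations'' remark is true for martingale differences via L\'epingle, but that requires either an actual martingale comparison or a direct multi-scale argument; a bound on a single lacunary square function does not suffice. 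The paper does not take this shortcut: after the major/minor split and a further Littlewood--Paley decomposition into annular pieces $f_{N,l}$, it proves the $V^r$ bound for each $l$ directly, using Theorem \ref{thm:minorarcestimate} on $\ell^2$ for $l>-C_1$, a Calder\'on--Zygmund weak-$(1,1)$-plus-interpolation argument with Khintchine and translated kernels for $p\ne 2$, $l>-C_1$, and only for the lowest frequency piece $l=-C_1$ does it invoke the ready-made Jones--Seeger--Wright variation theorem for Littlewood--Paley-type smooth bumps.

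Second, the uniform error bound $\|E_N\|_{L^\infty(\T^m)}\lesssim N^{-\delta}$, on which your ``minor-arc methods suffice everywhere'' statement rests, is unverified and is in fact not how the paper proceeds. Near $\xi=0$ the estimate $|m_N(\xi)-L_N(\xi)|\lesssim N^{-\delta}$ comes from Lipschitz-type control of the phases and the smallness of $|\xi_i|\cdot|P_i(N)|$; away from $\xi=0$ one would instead have to show that $m_N$ and $L_N$ are \emph{individually} small (from van der Corput on the discrete side and stationary phase on the continuous side), and these two regimes must be glued uniformly in $\xi$ after removing the floors. This is nontrivial: removing the floor introduces integer shifts $\xi_i\mapsto\xi_i+k_i$, and one has to handle all $k_i$ simultaneously and uniformly. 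The paper sidesteps this by comparing $m_{N;\Z}$ and $m_{N;\R}$ \emph{only on the major arc box} (Proposition \ref{prop:replacemultiplier}, where the comparison is genuinely pointwise-small because the phases there are flat), while on the minor arc it uses the inverse theorem for exponential sums (Lemma \ref{lem:exponentialsum}) and the resulting single-scale $\ell^2$ decay (Theorem \ref{thm:minorarcestimate}). Relatedly, the statement that ``there is no major-arc regime'' is misleading: there is exactly one major arc, the box $\prod_i[[2^{l_N}|P_i(N)|^{-1}]]$ around the origin, on which $m_N$ is of size $\sim 1$ and cannot be treated as an error; what the Hardy-field/non-integer hypotheses buy is that there are no \emph{other} major arcs at nonzero rationals, not that minor-arc estimates hold everywhere.

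As a secondary point, your appeal to ``L\'epingle-type arguments applied to the natural dilation structure of such a curve'' does not apply uniformly to $\P$. The family $\P$ includes functions like $t^a(\log t)^b$ with no exact dilation invariance, so one cannot directly cite a curve-averaging variation theorem that assumes homogeneity. The paper instead verifies the shift-invariant Calder\'on--Zygmund / H\"ormander conditions for the translated kernels $\F_{\R^m}^{-1}\Phi_{N,l}(\cdot-(P_1(Nt),\dotsc,P_m(Nt)))$ by hand.
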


As stated previously, a benefit of establishing a long variational inequality is that one can deduce a maximal inequality and norm convergence.

\begin{corollary} \label{cor:maximal}
Suppose $\{P_1,\dotsc,P_m\} \in \P$ and let $f \in L^p(X)$ with $p>1$. Then one has the maximal inequality \[ \left \| \sup_{N \in \N} |A_{N;X}^{P_1,\dotsc,P_m}f| \right\|_{L^p(X)} \leqsim_{P_1,\dotsc,P_m,m,p} \|f\|_{L^p(X)}, \] and the averages $A_{N;X}^{P_1,\dotsc,P_m}f$ converge both pointwise and in $L^p(X)$ norm as $N \to \infty$.
\end{corollary}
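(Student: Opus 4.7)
The plan is to derive all three conclusions from the long-variational inequality in Proposition \ref{prop:variationalineq}. Pointwise a.e.\ convergence is already the content of Theorem \ref{thm:mainthm}, so only two things remain: establishing the maximal inequality over \emph{all} $N \in \N$, and upgrading pointwise to $L^p$-norm convergence.

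For the maximal inequality, the main subtlety is that Proposition \ref{prop:variationalineq} only controls variation norms along lacunary $\D$, whereas we need the supremum over every $N \in \N$. I would bridge this gap with a positivity/monotonicity argument. The triangle inequality gives $|A_{N;X}^{P_1,\dotsc,P_m} f| \leq A_{N;X}^{P_1,\dotsc,P_m}|f|$, reducing matters to nonnegative input. Then for $g \geq 0$ and $2^k \leq N < 2^{k+1}$, comparing the partial sums of length $N$ to those of length $2^{k+1}$ yields $A_{N;X}^{P_1,\dotsc,P_m} g \leq 2\, A_{2^{k+1};X}^{P_1,\dotsc,P_m} g$ pointwise, so
\[ \sup_{N \in \N} |A_{N;X}^{P_1,\dotsc,P_m} f| \leq 2 \sup_{k \geq 0} A_{2^k;X}^{P_1,\dotsc,P_m}|f|. \]
Since the $r$-variation norm dominates the supremum by definition, the right-hand side is pointwise at most $2\, V^r(A_{N;X}^{P_1,\dotsc,P_m}|f|)_{N \in \D}$ for $\D = \{2^k \: k \geq 0\}$ and any fixed $r > 2$. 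Taking $L^p(X)$ norms and applying Proposition \ref{prop:variationalineq} with $\lambda = 2$, together with $\||f|\|_{L^p(X)} = \|f\|_{L^p(X)}$, delivers the maximal inequality.

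With the maximal inequality in hand, the passage to $L^p$-norm convergence is a standard application of the dominated convergence theorem. Writing $f_\infty(x)$ for the a.e.\ limit supplied by Theorem \ref{thm:mainthm} and $\mathcal{M}f(x) := \sup_{N \in \N} |A_{N;X}^{P_1,\dotsc,P_m} f(x)|$, we have $|A_{N;X}^{P_1,\dotsc,P_m} f - f_\infty|^p \leq (2 \mathcal{M} f)^p$ with $\mathcal{M}f \in L^p(X)$, so the pointwise convergence upgrades to convergence in $L^p(X)$. No step in this deduction is genuinely hard — the entire corollary is a qualitative extraction from Proposition \ref{prop:variationalineq} — and the only point demanding care is the lacunary-to-full reduction for the supremum, which is precisely what the positivity comparison $A_N g \leq 2 A_{2^{k+1}} g$ accomplishes.
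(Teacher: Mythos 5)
Your proof is correct, and it takes essentially the same route the paper intends but leaves implicit. The key observations you need — $|A_{N;X}f| \leq A_{N;X}|f|$, the pointwise comparison $A_{N;X}g \leq 2\, A_{2^{k+1};X}g$ for $g \geq 0$ and $2^k \leq N < 2^{k+1}$, and the domination $\sup_N \leq V^r$ along the dyadic set — are exactly the standard lacunary-to-full reduction, and applying Proposition \ref{prop:variationalineq} with $\D = \{2^k\}$ and $\lambda = 2$ closes the argument for the maximal inequality. Pointwise convergence is Theorem \ref{thm:mainthm}, as you note, and dominated convergence (with dominating function $(2\mathcal{M}f)^p \in L^1$ thanks to the maximal inequality) gives $L^p$ convergence.

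The one place where your route differs from the paper's is norm convergence. The paper explicitly remarks that this can be obtained without the variational machinery: by the spectral theorem, $L^2$-norm convergence reduces to pointwise convergence of the exponential sums $\frac{1}{N}\sum_{n=1}^N e(\xi_1 \lfloor P_1(n)\rfloor + \dotsb + \xi_m \lfloor P_m(n)\rfloor)$, which is clear from the estimate in (\ref{eqn:directform}); one then extends to $L^p$, $p \neq 2$, by density and the maximal inequality. Your dominated-convergence argument is a perfectly valid alternative and is arguably cleaner for general $p$, since it bypasses the density step; but it does invoke the full pointwise convergence theorem, whereas the spectral-theorem route is genuinely independent of it. Both are standard and both are correct.
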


We remark that the norm convergence of $A_{N;X}^{P_1,\dotsc,P_m}$ is standard to prove and does not require the heavy machinery of Proposition \ref{prop:variationalineq}. Indeed, by the spectral theorem norm convergence follows from the pointwise convergence of the exponential sum \[ \frac{1}{N} \sum_{n=1}^N e(\xi_1 \lfloor P_1(n) \rfloor + \dotsb+\xi_m \lfloor P_m(n) \rfloor) \] for all $\xi_1,\dotsc,\xi_m$. But it is clear by (\ref{eqn:directform}) that we have convergence to $1_{\xi_1=\dotsb=\xi_m=0}$, so norm convergence follows.

We will in fact use Corollary \ref{cor:maximal} to improve on Theorem (\ref{thm:integervariation}) in some cases. For an appropriate subset $\P'$ of $\P$, which includes all families of the form $\{t^{c_1},\dotsc,t^{c_m} \}$ where $0<c_1<\dotsb<c_m$ are non-integer, we are able to prove full variational estimates (not just long variational ones).

\begin{theorem} \label{thm:integerfullvariation}
Suppose $\{P_1,\dotsc,P_m\} \in \P'$ and let $f \in \ell^p(\Z^m)$ with $p>1$. Then for any $r>2$ we have the variational inequality \begin{equation} \label{eqn:integerfullvariation}\|V^r(\tilde{A}_N^{P_1,\dotsc,P_m} f)_{N \in \N} \|_{\ell^p(\Z^m)} \leqsim_{P_1,\dotsc,P_m,m,p,r} \|f\|_{\ell^p(\Z^m)}. \end{equation}
\end{theorem}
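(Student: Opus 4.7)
The plan is to deduce Theorem \ref{thm:integerfullvariation} from Theorem \ref{thm:integervariation} via a long--short variation decomposition. Fix $r>2$ and set $\D_0=\{2^k:k\ge 0\}$. By the subadditivity of $t\mapsto t^{1/r}$ for $r\ge 1$, splitting each partition $N_0<\dotsb<N_J$ according to whether consecutive terms lie in a common dyadic block $[2^k,2^{k+1}]$ or straddle such a block gives the pointwise inequality
\[ V^r(\tilde{A}_N^{P_1,\dotsc,P_m}f)_{N\in\N} \leqsim V^r(\tilde{A}_N^{P_1,\dotsc,P_m}f)_{N\in\D_0} + \Bigl(\sum_{k\ge 0} V^r(\tilde{A}_N^{P_1,\dotsc,P_m}f)_{N\in [2^k,2^{k+1}]}^r\Bigr)^{1/r}. \]
The first (long) term is controlled directly by Theorem \ref{thm:integervariation} applied to the $2$-lacunary set $\D_0$ (restricted to finite truncations and then passed to the limit, which is legitimate because the constants in Theorem \ref{thm:integervariation} are independent of $|\D|$). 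It therefore remains to control the short-variation term in $\ell^p(\Z^m)$.

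The strategy for the short variation is to prove a per-block bound of the form
\[ \|V^r(\tilde{A}_N^{P_1,\dotsc,P_m}f)_{N\in [2^k,2^{k+1}]}\|_{\ell^p(\Z^m)} \leqsim 2^{-\delta k}\|f\|_{\ell^p(\Z^m)} \]
for some $\delta=\delta(p,r)>0$. Once this is available, the crude pointwise inequality $(\sum_k X_k^r)^{1/r}\le\sum_k X_k$ (valid for $X_k\ge 0$ and $r\ge 1$) collapses the short variation to a convergent geometric series in $\ell^p(\Z^m)$, yielding Theorem \ref{thm:integerfullvariation}.

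I would establish the per-block bound first on $\ell^2$ and then interpolate. At $p=2$, Plancherel reduces the problem to an $L^\infty(\T^m)$ bound on the block variation of the multiplier
\[ \tilde m_N(\xi) = \frac{1}{N}\sum_{n=\lfloor N/2\rfloor+1}^{N} e\bigl(-\xi_1\lfloor P_1(n)\rfloor - \dotsb - \xi_m\lfloor P_m(n)\rfloor\bigr). \]
The restriction to $\P'$ (which presumably enforces an asymptotic of the form $P_i(t)\sim c_it^{c_i}$ with the $c_i$ positive, distinct and non-integer) should permit a standard minor/major-arc decomposition. On the minor arcs, van der Corput and Weyl-type estimates should give $|\tilde m_{N+1}(\xi)-\tilde m_N(\xi)|\leqsim N^{-1-\eta}$ for some $\eta>0$, and summing across a dyadic block combined with the trivial inequality $V^r\le V^1$ (valid for $r\ge 1$) yields the desired $2^{-\eta k}$ decay uniformly in $\xi$. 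On the major arcs one replaces $\tilde m_N$ by a smooth integral analogue obtained by passing from the sum over $n$ to the corresponding integral in $t$; the block variation of this integral model can be computed directly using the homogeneity of the leading power, while the approximation error decays polynomially in $N$ by stationary phase. Interpolating the resulting $\ell^2$ estimate against the uniform $\ell^p$ bound provided by the maximal inequality of Corollary \ref{cor:maximal} extends the per-block estimate to all $1<p<\infty$, at the cost of a possibly smaller $\delta$.

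The main obstacle is the major-arc regime, where $\tilde m_N$ does not decay at all and the block-wise variational control must instead come from the smooth integral model together with rapid decay of the approximation error. This is precisely where the stronger hypothesis $\{P_1,\dotsc,P_m\}\in\P'$ is needed, since only for families whose leading term is a single power does the asymptotic theory of the exponential sum provide the uniform-in-$\xi$ expansion that makes a clean block-by-block comparison possible; for the general class $\P$ the weaker asymptotic information seems only sufficient to produce the long-variational estimate of Theorem \ref{thm:integervariation}.
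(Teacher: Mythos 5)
Your overall frame (long--short decomposition; long piece from Theorem~\ref{thm:integervariation}; short piece estimated on $\ell^2$ and interpolated) matches the paper at a high level, but the central step you rely on is false: there is no $\delta>0$ for which
\[
\bigl\|V^r(\tilde{A}_N^{P_1,\dotsc,P_m}f)_{N\in\I_k}\bigr\|_{\ell^p(\Z^m)} \leqsim 2^{-\delta k}\,\|f\|_{\ell^p(\Z^m)}
\]
holds uniformly in $f$. At $p=2$, Plancherel turns this into a claim that $V^r\bigl(m_{N;\Z}(\xi)\bigr)_{N\in\I_k}\leqsim 2^{-\delta k}$ uniformly in $\xi$. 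Now choose $\xi$ so that $|\xi_i|\,|P_i(2^k)|\sim 1$ for each $i$. As $N$ ranges over $\I_k=\{2^k,\dotsc,2^{k+1}\}$, the rescaled frequencies $|\xi_i|\,|P_i(N)|$ sweep through the transition zone where the multiplier goes from ``near its value at $0$'' to ``small'': by the inverse theorem (\ref{eqn:directform}) the multiplier is $\Omega(1)$ at the low end of the block and $o(1)$ at the high end, so the block variation is bounded below by a constant. Taking $\hat f$ concentrated at such a $\xi$ gives $\|V^r(A_Nf)_{N\in\I_k}\|_{\ell^2}\gtrsim\|f\|_{\ell^2}$, with no decay in $k$. (The degenerate test case $\xi$ very near $0$ does give decay, which is why the estimate looks plausible; the obstruction lies at frequencies comparable to the block scale.) Your minor-arc heuristic $|\tilde m_{N+1}-\tilde m_N|\leqsim N^{-1-\eta}$ also cannot hold uniformly: consecutive differences of these normalized partial sums are generically of size $N^{-1}$, and extra decay requires cancellation that is absent precisely at these transition frequencies.

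The paper's way around this is to not look for decay in $k$ at all. Instead it performs a Littlewood--Paley decomposition $f=\sum_j f*\check\eta_j$ with $\eta_j$ supported at frequency scale $\prod_i[[|P_i(2^j)|^{-1}]]$, and proves decay in the \emph{mismatch} parameter $|j|$:
\[
\Bigl\|\Bigl(\sum_{k}V^2\bigl(A_N(f*\check\eta_{j+k})\bigr)_{N\in\I_k}^2\Bigr)^{1/2}\Bigr\|_{\ell^p(\Z^m)} \leqsim 2^{-O(|j|)1_{p=2}}\|f\|_{\ell^p(\Z^m)},
\]
which is summable in $j$ after interpolation. At $p=2$ the per-$k$ bound comes from Bourgain's $V^2$ lemma applied to $\nu_N=m_{N;\Z}-m_{2^k;\Z}$, using the upper bound $|\nu_N(\xi)|\leqsim\min\bigl(\max_i|\xi_i||P_i(2^k)|,\ (\max_i|\xi_i||P_i(2^k)|)^{-O(1)}\bigr)$ together with the clean homogeneity $|P_i(t)|\sim t^{c_i}$ from $\P'$; this is where $\P'$ (rather than $\P$) is genuinely used, not in some asymptotic of the exponential sum. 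For $p\neq 2$ the argument does not interpolate the per-block bound directly against Corollary~\ref{cor:maximal}; instead one uses the pointwise bound $V^2(A_Nf)_{N\in\I_k}\leqsim A_{2^{k+1}}|f|$ and a vector-valued (Fefferman--Stein-type) extension of the maximal inequality (Lemma~\ref{lem:vectorinequality}), then interpolates the full $j$-sum, not each block. Your long-variation step and the reduction of $V^r$ to a long plus short split (with your $V^r/\ell^r$ variant, which is weaker but still valid) are fine; the missing idea is the frequency decomposition that converts the absent $k$-decay into usable $|j|$-decay.
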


By the same reductions as before and the Calder\'on transference principle again, Theorem \ref{thm:integervariation} implies full variational estimates on any measure preserving system.

\begin{corollary} \label{cor:fullvariation}
Suppose $\{P_1,\dotsc,P_m\} \in \P'$ and let $f \in L^p(X)$ with $p>1$. Then for any $r>2$ we have the variational inequality \[ \|V^r(A_N^{P_1,\dotsc,P_m}f)_{N \in \N}\|_{L^p(X)} \leqsim_{P_1,\dotsc,P_m,m,p,r} \|f\|_{L^p(X)}. \]
\end{corollary}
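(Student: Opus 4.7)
My plan is to mirror the reductions carried out between Proposition \ref{prop:variationalineq} and Theorem \ref{thm:integervariation}. The first step is to apply the Calder\'on transference principle to reduce the abstract setting on $(X,\mu,T_1,\dotsc,T_m)$ to the model shift system on $\Z^m$; since transference preserves variational norms (it is just a triangle/embedding argument into direct sums of $\ell^p(\Z^m)$), this step is formal once we have the corresponding $\ell^p(\Z^m)$ full-variational statement for the operator $A_N^{P_1,\dotsc,P_m}$ itself. The second step is then to deduce full variation for $A_N^{P_1,\dotsc,P_m}$ from the full variation for $\tilde A_N^{P_1,\dotsc,P_m}$ supplied by Theorem \ref{thm:integerfullvariation}, via a long/short decomposition along the dyadic scales $N_k=2^k$.

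For the long part, I would use the identity $A_N^{P_1,\dotsc,P_m} = \tfrac12\, A_{N/2}^{P_1,\dotsc,P_m} + \tilde A_N^{P_1,\dotsc,P_m}$, valid for $N$ even, and iterate to obtain the telescoping expression
\[ A_{N_k}^{P_1,\dotsc,P_m} f \;=\; \sum_{j=0}^{k-1} 2^{-j}\, \tilde A_{N_{k-j}}^{P_1,\dotsc,P_m} f \;+\; O\!\left(2^{-k}|f|\right). \]
By the triangle inequality for $V^r$ and the summability of $\sum_j 2^{-j}$, the long variation $V^r(A_{N_k}^{P_1,\dotsc,P_m} f)_{k\in\N}$ is dominated by $\sum_{j\geq 0} 2^{-j}\, V^r(\tilde A_{N_k}^{P_1,\dotsc,P_m} f)_{k\in\N}$, and each summand is controlled in $\ell^p(\Z^m)$ by Theorem \ref{thm:integerfullvariation} restricted to the lacunary set $\{N_k\}$. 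For the short variation inside a block $[N_k,N_{k+1})$, I would use the telescoping identity $A_{N+1}^{P_1,\dotsc,P_m} f - A_N^{P_1,\dotsc,P_m} f = \tfrac{1}{N+1} f_{N+1} - \tfrac{1}{N+1} A_N^{P_1,\dotsc,P_m} f$, where $f_n(x) := f(x-(\lfloor P_1(n)\rfloor,\dotsc,\lfloor P_m(n)\rfloor))$. Summing in $N$ across the block produces a pointwise bound of the form $\sup_{N \geq N_k}|A_N^{P_1,\dotsc,P_m} f| + N_k^{-1}\sum_{N_k \leq n < N_{k+1}}|f_n|$, and once the $r$-th powers are summed in $k$ and $\ell^p$ norms are taken, both pieces are controlled by the maximal function $\sup_N |A_N^{P_1,\dotsc,P_m} f|$, which is bounded by $\|f\|_{\ell^p(\Z^m)}$ thanks to Corollary \ref{cor:maximal}.

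The main obstacle is really just bookkeeping: I need to check that the error in the iterated identity (arising from $N$ not being a pure power of two and from floor-function perturbations near endpoints) contributes only lower-order terms that are again absorbed into the Hardy--Littlewood-type maximal function. No genuinely new input beyond the Calder\'on transference principle, Theorem \ref{thm:integerfullvariation}, and the maximal inequality of Corollary \ref{cor:maximal} should be required.
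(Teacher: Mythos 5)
Your high-level plan (Calder\'on transference to $\Z^m$, then a reduction from $\tilde A_N$ to $A_N$ via the dyadic telescoping identity) is the right one, and your treatment of the \emph{long} variation over $\{2^k\}$ is sound. However, your argument for the \emph{short} variation has a genuine gap. From the recursion you derive the pointwise bound $V^2(A_N f)_{N\in\I_k}\leqsim \sup_{N\geq 2^k}|A_N f|+2^{-k}\sum_{n\in\I_k}|f_n|\leqsim \sup_{N}|A_Nf|+A_{2^{k+1}}|f|$, but this bound does not decay in $k$, so the $\ell^2$-sum over $k$ cannot be absorbed into the maximal function. Concretely, at $p=2$ one has $\bigl\|\bigl(\sum_k (A_{2^{k+1}}|f|)^2\bigr)^{1/2}\bigr\|_{\ell^2}^2=\sum_k\|A_{2^{k+1}}|f|\|_{\ell^2}^2$; taking $f$ to be the indicator of a box of side $\sim P_i(2^K)$ in the $i$-th coordinate, each term with $k\leq K$ is comparable to $\|f\|_{\ell^2}^2$ (the Fourier support of $|f|$ sits inside the major arc at scale $2^k$ for those $k$, so the multiplier is bounded below), and the sum is $\gtrsim K\|f\|_{\ell^2}^2$. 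The same divergence occurs for $\sum_k\|\sup_{N\geq 2^k}|A_Nf|\|^2$. In the paper, the short-variation estimate for $\tilde A_N$ (Section 6) is not a corollary of the maximal inequality: it hinges on a Littlewood--Paley decomposition, the square-function bound~(\ref{eqn:squarefunction}), and the gain $2^{-O(|j|)}$ coming from the exponential-sum decay via Lemma~\ref{lem:general2shortbound} and~(\ref{eqn:nuNbound}). That gain is exactly what makes the $k$-sum converge, and your argument has no substitute for it.

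The cleaner route, and the one the paper's ``same reductions as before'' alludes to, is to avoid re-introducing a long/short split (that split already lives \emph{inside} the proof of Theorem~\ref{thm:integerfullvariation}). After transference, write $A_N=\sum_{j\geq 0}c_{N,j}\,\tilde A_{\lfloor N/2^j\rfloor}$ with $c_{N,j}=\lfloor N/2^j\rfloor/N$, so that the upper-half identity $A_N=\tilde A_N+\tfrac{\lfloor N/2\rfloor}{N}A_{\lfloor N/2\rfloor}$ has been iterated all the way down. One checks $\sup_N|c_{N,j}|\leq 2^{-j}$ and, because $c_{\cdot,j}$ is a sawtooth with oscillation $\sim 2^{-j}/M$ in the $M$-th block $[2^jM,2^j(M+1))$, that $V^r(c_{N,j})_{N\in\N}\leqsim_r 2^{-j}$ for every $r>1$. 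Since repeated consecutive values do not increase an $r$-variation, $V^r(\tilde A_{\lfloor N/2^j\rfloor}f)_{N\in\N}=V^r(\tilde A_M f)_{M\in\N}$. A Leibniz-type bound $V^r(c_Nd_N)\leq(\sup_N|c_N|)V^r(d_N)+(\sup_N|d_N|)V^r(c_N)$ then gives $V^r(A_Nf)_{\N}\leqsim\sum_j 2^{-j}V^r(\tilde A_Mf)_M\leqsim V^r(\tilde A_Mf)_M$, and Theorem~\ref{thm:integerfullvariation} finishes the proof. This is a genuine pointwise inequality between variations, so it transfers back to $L^p(X)$ without loss.
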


We will use Corollary \ref{cor:fullvariation} to obtain bounds on the rate of convergence of exponential sums of the form \[ \frac{1}{N} \sum_{n=1}^N e(\xi_1 \lfloor P_1(n) \rfloor + \dotsb + \lfloor P_m(n) \rfloor) \] whenever $P_1,\dotsc,P_m \in \P'$.

\subsection{Overview of Proof} The rest of the paper will be dedicated to proving Theorems \ref{thm:integervariation} and \ref{thm:integerfullvariation}. Working on the integer lattice will allow us to use harmonic analytic tools such as the Fourier transform, which will in turn allow us to use the circle method to analyse our averages. This application of the circle method is based on an analysis of the relevant exponential sums. One benefit of studying Hardy field functions is that we will only have one major arc---a box of suitable size centred at the origin.

Our operator is a convolution operator: $\tilde{A}_N^{P_1,\dotsc,P_m}f = K_N^{P_1,\dotsc,P_m}*f$ with \[ K_N^{P_1,\dotsc,P_m}
(x) = \int_{\T^m} m_{N;\Z}^{P_1,\dotsc,P_m}(\xi) e(x \cdot \xi) \,d\xi \] where \[ m_{N;\Z}^{P_1,\dotsc,P_m}(\xi_1,\dotsc,\xi_m) = \frac{1}{N} \sum_{n=1}^N 1_{n>\frac{N}{2}} e(\xi_1 \lfloor P_1(n) \rfloor + \dotsb + \xi_m \lfloor P_m(n) \rfloor). \] We will first prove that if $\delta \leq |m_{N;\Z}^{P_1,\dotsc,P_m}(\xi_1,\dotsc,\xi_m)|$ then either $N \leqsim_{P_1,\dotsc,P_m} \delta^{-O_{P_1,\dotsc,P_m}(1)}$ or $\|\xi_i\|_\T \leqsim_{P_1,\dotsc,P_m} \delta^{-O_{P_1,\dotsc,P_m}(1)} |P_i(N)|^{-1}$ for each $i \in \{1,\dotsc,m\}$, where $\|\xi\|_\T = \min_{n \in \Z} |\xi-n|$ is the distance from $\xi$ to the nearest integer. This result will define the relevant major arc where the symbol of our operator is large. Although we state and prove this exponential sum result in inverse form, one way to interpret it is to take $\delta = |m_{N;\Z}^{P_1, \dotsc, P_m}(\xi_1,\dotsc,\xi_m)|$, in which case one obtains \begin{equation} \label{eqn:directform} |m_{N;\Z}^{P_1,\dotsc,P_m}(\xi_1,\dotsc,\xi_m)| \leqsim_{P_1,\dotsc,P_m} N^{-O_{P_1,\dotsc,P_m}(1)} + \min_{1 \leq i \leq m} (\|\xi_i\|_\T |P_i(N)|)^{-O_{P_1,\dotsc,P_m}(1)}. \end{equation} In addition to providing the right definition for the major arc, we will also be able to use this inverse theorem to provide an $\ell^2(\Z^m)$ bound for functions supported on the minor arc which exhibits decay.

We will then turn to proving (\ref{eqn:integervariation}). The decay obtained by the single scale minor arc estimate coupled with the lacunary nature of $\D$ will allow us to prove that the $r$-variation of the minor arc piece of our operator is bounded on $\ell^p(\Z^m)$. At this point we can perform a Littlewood--Paley type decomposition and use tools such as vector valued square function bounds and L\'epingle's inequality to conclude the proof of Theorem \ref{thm:integervariation}.

Finally, we will turn to the proof of (\ref{eqn:integerfullvariation}). In proving full variational inequalities it is common to split the variation operator into a long variation piece and a short variation piece. The long variation of a sequence $(a_N)$ is simply defined as $V^r(a_N)_{N \in \D}$ where \[ \D = \{2^n : n \in \N\}, \] and the short variation is defined as \[ \left( \sum_{k=1}^\infty V^2(a_N)_{N \in \I_k}^2 \right)^\frac{1}{2}, \] where $\I_k = \{2^k,\dotsc,2^{k+1}\}$. Indeed, we have the following inequality \begin{equation} \label{eqn:longshort} V^r(a_N)_{N \in \N} \leqsim V^r(a_N)_{N \in \D} + \left( \sum_{k=1}^\infty V^2(a_N)_{N \in \I_k}^2 \right)^\frac{1}{2}. \end{equation}

Since Theorem \ref{thm:integervariation} will have been proved by this point and $\P' \subseteq \P$, it will suffice to only prove that the short variation operator is bounded on $\ell^p(\Z^m)$. This will be much easier to handle than the long variational inequality and will not require the circle method. We will instead use a Littlewood--Paley decomposition and split into the $p=2$ and $p \neq 2$ cases. Note that by this stage the proof of Corollary \ref{cor:maximal} will also be complete. This is relevant because we will need a maximal inequality in order to prove the short variational inequality.

\section{Applications}

In this section we outline some applications of our main results. To begin, we use Proposition \ref{prop:variationalineq} and Corollary \ref{cor:fullvariation} to study rates of convergence of exponential sums. Given $\delta>0$, an indexing set $\D$, and a sequence $(a_N)_{N \in \D}$, we define the jump counting function $\mathcal{N}_\delta(a_N)_{N \in \D}$ to be the supremum over all $J \in \N$ for which there exist times $N_0 \leq \dotsb \leq N_J$ with $N_j \in \D$ such that $|a_{N_{j+1}}-a_{N_j}| \geq \delta$ for all $j$. In other words, $\mathcal{N}_\delta(a_N)_{N \in \D}$ denotes the number of $\delta$-jumps of the sequence $(a_N)_{N \in \D}$. Jump counting functions and variation norms are connected by the inequality \begin{equation} \label{eqn:jumpcountingvariation} \delta \mathcal{N}_\delta(a_N)_{N \in \D}^\frac{1}{r} \leq V^r(a_N)_{N \in \D} \end{equation} which is easily seen.

\begin{corollary}
Let $\{P_1,\dotsc,P_m\} \in \P$. Given any $\lambda$-lacunary set $\D$ and any $\delta>0$, we have \[ \mathcal{N}_\delta \left( \frac{1}{N} \sum_{n=1}^N e(\xi_1 \lfloor P_1(n) \rfloor + \dotsb + \xi_m \lfloor P_m(n) \rfloor) \right)_{N \in \D} \leqsim_{\varepsilon,\lambda} \delta^{-(2+\varepsilon)} \] for any $\varepsilon>0$. If $\{P_1,\dotsc,P_m\} \in \P'$ then \[ \mathcal{N}_\delta \left( \frac{1}{N} \sum_{n=1}^N e(\xi_1 \lfloor P_1(n) \rfloor + \dotsb + \xi_m \lfloor P_m(n) \rfloor) \right)_{N \in \N} \leqsim_{\varepsilon} \delta^{-(2+\varepsilon)} \] for any $\varepsilon>0$. The implied constants are uniform in $\xi_1,\dotsc,\xi_m$.
\end{corollary}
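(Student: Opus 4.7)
The plan is to deduce both jump-counting bounds from the abstract variational estimates already established, by choosing a particularly simple dynamical system in which the ergodic averages literally evaluate to the exponential sum. Fix $\xi = (\xi_1,\dotsc,\xi_m) \in \T^m$, and take $X = \T$ with normalized Lebesgue measure and the commuting measure-preserving maps $T_i \: \T \to \T$ defined by $T_i y = y + \xi_i \pmod{1}$. Take $f(y) = e(y)$, so that $\|f\|_{L^p(\T)} = 1$ for every $p \in [1,\infty]$, independently of $\xi$. A direct computation then gives
\[
A_{N;\T}^{P_1,\dotsc,P_m} f(y) = e(y) \cdot S_N(\xi),\qquad S_N(\xi) := \frac{1}{N}\sum_{n=1}^N e(\xi_1 \lfloor P_1(n)\rfloor + \dotsb + \xi_m \lfloor P_m(n)\rfloor).
\]

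Because the factor $e(y)$ has modulus one and is independent of $n$, every increment satisfies $|A_{N_{j+1}}f(y) - A_{N_j}f(y)| = |S_{N_{j+1}}(\xi) - S_{N_j}(\xi)|$, so the variation $V^r(A_N f(y))_{N \in \D}$ is the \emph{constant} function $V^r(S_N(\xi))_{N \in \D}$ in $y$. Taking $L^p(\T)$ norms and applying Proposition \ref{prop:variationalineq} with any fixed $p>1$ and $r = 2+\varepsilon$ gives
\[
V^r(S_N(\xi))_{N \in \D} = \|V^r(A_{N;\T}^{P_1,\dotsc,P_m}f)_{N \in \D}\|_{L^p(\T)} \leqsim_{P_1,\dotsc,P_m,m,p,r,\lambda} \|f\|_{L^p(\T)} = 1
\]
for every $\lambda$-lacunary $\D \subseteq \N$. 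The implicit constant is independent of the measure space and of the transformations $T_i$, hence uniform in $\xi$. Inserting this into (\ref{eqn:jumpcountingvariation}) yields $\delta \,\mathcal{N}_\delta(S_N(\xi))_{N \in \D}^{1/r} \leqsim 1$, i.e. $\mathcal{N}_\delta(S_N(\xi))_{N \in \D} \leqsim_{\varepsilon,\lambda} \delta^{-(2+\varepsilon)}$, establishing the first assertion.

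For the second assertion, assuming $\{P_1,\dotsc,P_m\} \in \P'$, I repeat the same construction verbatim but invoke Corollary \ref{cor:fullvariation} in place of Proposition \ref{prop:variationalineq}; this removes the lacunarity restriction and produces the bound over the full set $\D = \N$ with the same dependence on $\varepsilon$ (and no $\lambda$). Since $\|f\|_{L^p(\T)}$ is independent of $\xi$ in both cases, the uniformity in $\xi_1,\dotsc,\xi_m$ claimed in the corollary is automatic. There is no genuine obstacle here---the content is entirely in Proposition \ref{prop:variationalineq} and Corollary \ref{cor:fullvariation}; the only items to verify are the elementary computation of $A_{N;\T}^{P_1,\dotsc,P_m}f$ on the rotation system and the jump-variation comparison (\ref{eqn:jumpcountingvariation}).
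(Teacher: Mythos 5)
Your proposal is correct and takes essentially the same approach as the paper: transfer the abstract variational estimates to a concrete rotation system where the ergodic average collapses to the exponential sum, then invoke the jump--variation comparison~(\ref{eqn:jumpcountingvariation}). Your version is in fact marginally cleaner, since you work on $\T$ with $T_i y = y+\xi_i$ rather than on $\T^m$ with coordinate rotations $T_i x = x+\alpha_i e_i$ and an auxiliary character $e(\beta\cdot x)$ requiring the choice $\alpha_i\beta_i=\xi_i$, but the underlying idea is identical.
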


\begin{proof}
Consider the torus $\T^m$ with the normalised Lebesgue measure, and define $T_i\:\T^m \to \T^m$ by $T_ix=x+\alpha_i e_i$ for some fixed $\alpha_i \in \T$, where $e_i$ is the $i$\textsuperscript{th} standard basis vector. Let $f(x)=e(\beta \cdot x)$ for fixed $\beta \in \T^m$. Then $\|f\|_{L^2(\T^m)}=1$ and \[ A_{N;\T}^{P_1,\dotsc,P_m}f(x) = e(\beta \cdot x) \frac{1}{N} \sum_{n=1}^N e(\beta \cdot (\alpha_1 \lfloor P_1(n) \rfloor+\dotsb+\lfloor P_m(n) \rfloor)). \] Given $\xi_1,\dotsc,\xi_m \in \T$, choose $\beta$ and $\alpha_i$ appropriately so that $\alpha_i\beta_i=\xi_i$. Then \[ \|V^r(A_{N;\T}^{P_1,\dotsc,P_m}f)_{N \in \D} \|_{L^2(\T^m)} = V^r \left( \frac{1}{N} \sum_{n=1}^N e(\xi_1 \lfloor P_1(n) \rfloor + \dotsb + \xi_m \lfloor P_m(n) \rfloor) \right)_{N \in \D}, \] so Proposition \ref{prop:variationalineq} implies \[ V^r \left(\frac{1}{N} \sum_{n=1}^N e(\xi_1 \lfloor P_1(n) \rfloor + \dotsb + \xi_m \lfloor P_m(n) \rfloor) \right)_{N \in \D} \leqsim_{r,\lambda} 1 \] for any $r>2$, uniformly in $\xi_1,\dotsc,\xi_m$. The first claim then follows by (\ref{eqn:jumpcountingvariation}). An analogous argument with $\D$ replaced by $\N$ and Corollary \ref{cor:fullvariation} instead of Proposition \ref{prop:variationalineq} yields the second claim.
\end{proof}

Next we give an application to equidistribution.

\begin{corollary}
Let $(X,\mu)$ be a probability space and let $T_1,\dotsc,T_m$ be pairwise commuting and ergodic transformations. Let $\{P_1,\dotsc,P_m\} \in \P$. Then for all $f \in L^p(X)$ with $p>1$ and $\mu$-almost every $x \in X$ we have \[ \lim_{N \to \infty} \frac{1}{N} \sum_{n=1}^N f(T_1^{\lfloor P_1(n) \rfloor} \dotsm T_m^{\lfloor P_m(n) \rfloor} x) = \int_X f \,d\mu. \] In particular, we have the equidistribution property \[ \lim_{N \to \infty} \frac{|\{1 \leq n \leq N : T_1^{\lfloor P_1(n) \rfloor} \dotsm T_m^{\lfloor P_m(n) \rfloor} x \in A \}|}{N} = \mu(A) \] for all measurable $A \subseteq X$ and $\mu$-almost every $x \in X$.
\end{corollary}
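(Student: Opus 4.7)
The a.e.~existence of $f^*(x):=\lim_{N\to\infty} A_{N;X}^{P_1,\dotsc,P_m}f(x)$ for $f\in L^p(X)$, $p>1$, is the content of Theorem~\ref{thm:mainthm}, so the task is to identify $f^*$ with the constant $\int_X f\,d\mu$; the equidistribution statement is then the special case $f=\1_A$.

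I would first treat $f\in L^\infty(X)\subseteq L^2(X)$ via the spectral theorem. The commuting unitaries $U_i\colon h\mapsto h\circ T_i$ on $L^2(X)$ admit a joint projection-valued spectral measure $E$ on $\T^m$ satisfying $U_1^{k_1}\dotsm U_m^{k_m}=\int_{\T^m} e(k\cdot\xi)\,dE(\xi)$, whence
\[
A_{N;X}^{P_1,\dotsc,P_m}f = \int_{\T^m} m_N(\xi)\,dE(\xi)f, \qquad m_N(\xi) := \frac{1}{N}\sum_{n=1}^N e\Bigl(\sum_{i=1}^m \xi_i \lfloor P_i(n) \rfloor\Bigr).
\]
By (\ref{eqn:directform}) and the remark following Corollary~\ref{cor:maximal}, $m_N(\xi)\to \1_{\xi=0}$ pointwise on $\T^m$ with $|m_N|\le 1$, so dominated convergence in the spectral integral gives $A_{N;X}^{P_1,\dotsc,P_m}f\to E(\{0\})f$ in $L^2(X)$. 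But $E(\{0\})f$ is the projection of $f$ onto the joint invariants of the $U_i$, and any joint invariant is in particular $T_1$-invariant; ergodicity of $T_1$ then forces it to be constant, so $E(\{0\})f=\bigl(\int_X f\,d\mu\bigr)\cdot\1$. Since the pointwise limit $f^*$ must agree with the $L^2$ limit a.e., the desired identity $f^*=\int_X f\,d\mu$ holds a.e.~whenever $f\in L^\infty(X)$.

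To extend to general $f\in L^p(X)$ I would use the maximal inequality from Corollary~\ref{cor:maximal}: given $\varepsilon>0$, pick $g\in L^\infty(X)$ with $\|f-g\|_{L^p(X)}<\varepsilon$, so that for $\mu$-a.e.~$x$
\[
\bigl|f^*(x) - \textstyle\int_X f\,d\mu\bigr| \le \sup_{N}\bigl|A_{N;X}^{P_1,\dotsc,P_m}(f-g)(x)\bigr| + \bigl|\textstyle\int_X(f-g)\,d\mu\bigr|.
\]
The $L^p$ norm of the right-hand side is $\leqsim \|f-g\|_{L^p(X)}<C\varepsilon$ (via the maximal inequality for the first term and the inclusion $L^p(X)\subseteq L^1(X)$ for the second, which is valid because $\mu$ is a probability measure), so Chebyshev gives $\mu\{|f^*-\int f\,d\mu|>\delta\}\le(C\varepsilon/\delta)^p$; sending $\varepsilon\to 0$ and then $\delta\to 0$ forces $f^*=\int_X f\,d\mu$ a.e. The only nontrivial input is the spectral identification of the $L^2$ limit, which is a direct consequence of the already-established $m_N\to \1_{\{0\}}$; the rest is a standard $L^2\to L^p$ transfer through the maximal inequality, so I do not anticipate any serious obstacles beyond careful bookkeeping of the prior results.
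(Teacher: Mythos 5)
Your proof is correct and follows essentially the same route as the paper: identify the $L^2$ limit as $\int_X f\,d\mu$ via the spectral theorem, the convergence $m_N\to\1_{\{0\}}$ from (\ref{eqn:directform}), and ergodicity; then transfer from $L^\infty$ to $L^p$ through the maximal inequality of Corollary~\ref{cor:maximal}. The only stylistic difference is that you identify the a.e.\ pointwise limit with the $L^2$ limit directly (and spell out the density argument via Chebyshev), whereas the paper first passes to a subsequence to obtain a.e.\ convergence of the $L^2$-convergent averages; both are standard and equivalent.
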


\begin{proof}
Note by (\ref{eqn:directform}) we have \[ \frac{1}{N} \sum_{n=1}^N e(\xi_1 \lfloor P_1(n) \rfloor + \dotsb + \lfloor P_m(n) \rfloor) \to 1_{\xi_1=\dotsb=\xi_m=0}. \] Thus by the spectral theorem and the assumption that $T_1,\dotsc,T_m$ are ergodic, it follows that \[ \lim_{N \to \infty} \frac{1}{N} \sum_{n=1}^N f(T_1^{\lfloor P_1(n) \rfloor} \dotsm T_m^{\lfloor P_m(n) \rfloor} x) = \int_X f \,d\mu \] for all $f \in L^\infty(X)$ with convergence taking place in $L^2(X)$ norm. Therefore, there is a subsequence $(N_j)$ along which one has \[ \lim_{j \to \infty} \frac{1}{N_j} \sum_{n=1}^{N_j} f(T_1^{\lfloor P_1(n) \rfloor} \dotsm T_m^{\lfloor P_m(n) \rfloor} x) = \int_X f \,d\mu \] pointwise almost everywhere. Combining this observation with Theorem \ref{thm:mainthm} proves that \[ \lim_{N \to \infty} \frac{1}{N} \sum_{n=1}^N f(T_1^{\lfloor P_1(n) \rfloor} \dotsm T_m^{\lfloor P_m(n) \rfloor} x) = \int_X f \,d\mu \] holds almost everywhere for all $f \in L^\infty(X)$. Since $L^\infty(X)$ is dense in $L^p(X)$, we obtain the same conclusion for $f \in L^p(X)$ with $p>1$ by the maximal inequality from Corollary (\ref{cor:maximal}). Specialising $f=1_A$ yields the equidistribution result.
\end{proof}

\section{Preliminaries}

\subsection{Hardy Field Functions} \label{subsec:hardyfield}

In this subsection we provide a definition of a Hardy field and discuss their properties. We will then formally describe the properties of $\P$ and $\P'$ that we will impose.

Consider the set of smooth functions $P\:(u,\infty) \to \R$, defined on some interval $(u,\infty)$, where $u \in \R \cup \{-\infty\}$. Call two such functions $P$ and $Q$ equivalent if there exists $T>0$ such that $P(t)=Q(t)$ for all $t \geq T$. This is an equivalence relation, and a \emph{germ at infinity} is simply an equivalence class under this notion of equivalence. The set of germs at infinity becomes a ring under the obvious operations $[P]+[Q]=[P+Q]$ and $[P][Q]=[PQ]$. By an abuse of notation we will denote $[P]$ by $P$. A subfield of the ring of germs at infinity which is closed under differentiation is called a \emph{Hardy field}. Common examples of Hardy fields include the field of constant functions on $\R$, the set $\R(x)$ of rational functions, and the so-called \emph{logarithmico-exponential} functions, which is the field generated (under the usual ring operations, differentiation, and function composition) by polynomials, the exponential function, and the logarithm.

Hardy field functions satisfy many nice properties. If $\H$ is a Hardy field and $P \in \H$ then $P$ is eventually positive, negative, or identically zero. This follows from the fact that $P$ must have a multiplicative inverse in $\H$ (as long as $P \neq 0$) and the Intermediate Value Theorem. As a consequence, $\H$ contains no periodic functions, and hence $\lim_{t \to \infty} P(t)$ always exists in $\R \cup \{\pm \infty\}$ for any $P \in \H$. One can therefore impose a total ordering $\prec$ on $\H$ by writing $P \prec Q$ if $Q-P$ is eventually positive. Moreover, note that for any distinct $P,Q \in \H$ either $\lim_{t \to \infty} \frac{P(t)}{Q(t)}$ or $\lim_{t \to \infty} \frac{Q(t)}{P(t)}$ is a finite constant. We therefore write $P(t) \sim_\H Q(t)$ if $\lim_{t \to \infty} \frac{P(t)}{Q(t)}=1$. Finally, one can define the \emph{type} of a function $P \in \H$ by \[ \tau(P) = \inf \{ \alpha \in \R \mid |P(t)| \leq t^\alpha \text{ eventually} \}. \] If $\tau(P)<\infty$ we say that $P$ is \emph{sub-polynomial}. Thus for every sub-polynomial $P \in \H$ there exists $\alpha \in \R$ such that for all $\varepsilon>0$ we have \begin{equation} \label{eqn:Hardybound} t^{\alpha-\varepsilon} \leq |P(t)| \leq t^{\alpha+\varepsilon} \end{equation} for all $t$ sufficiently large (depending on $\varepsilon$). We call $\tau(P)$ the \emph{growth rate} of $P$ in this case. We will call $P$ \emph{super-polynomial} if $\tau(P)>0$ and say that $P$ is of \emph{polynomial growth} if $0<\tau(P)<\infty$.

The type of a Hardy field function behaves well with respect to differentiation and inverses. Indeed, if $\tau(P)<\infty$ then $\tau(P') = \tau(P)-1$ (this can be seen by L'H\^opital's rule), and if additionally $\tau(P) \neq 0$ then $\tau(P^{-1}) = \tau(P)^{-1}$.

In this paper we will say a collection $\{P_1,\dotsc,P_m\}$ is an element of $\P$ if \begin{enumerate}[(i)]
    \item $P_1,\dotsc,P_m$ are of polynomial growth;
    \item $P_1,\dotsc,P_m$ are non-polynomial in the sense that for each $i \in \{1,\dotsc,m\}$ and $k \in \N$ we have $\lim_{t \to \infty} \frac{P_i(t)}{t^k} = 0$ or $\lim_{t \to \infty} \frac{t^k}{P_i(t)}=0$;
    \item $\tau(P_1),\dotsc,\tau(P_m)$ are distinct;
    \item $P_1,\dotsc,P_m$ satisfy the regularity conditions \begin{equation} \label{eqn:halforder} P_i\left(\frac{t}{2}\right) \sim P_i(t) \end{equation} and \begin{equation} \label{eqn:derivativeorder} tP'(t) \sim P(t) \end{equation} for all $t$ sufficiently large (depending on $P_1, \dotsc,P_m$).
\end{enumerate} We will say that $\{P_1,\dotsc,P_m\}$ is an element of $\P'$ if there exist non-integer $0<c_1<\dotsb<c_m$ such that $\lim_{t \to \infty} \frac{P_i(t)}{t^{c_i}}$ is a non-zero constant for each $i$ (thus $\tau(P_i)=c_i$). It is easily seen that $\P' \subseteq \P$. Note that if $\lim_{t \to \infty} \frac{P(t)}{t^c}$ is a non-zero constant then one can remove the dependence on $\varepsilon$ in (\ref{eqn:Hardybound}) in the sense that \begin{equation} \label{eqn:noepsHardybound} |P(t)| \sim t^c \end{equation} for all $t$ sufficiently large.

\subsection{Variation Norms}

We will use various standard facts about variation norms throughout the course of the paper. Let $\D$ be any countable ordered set and let $a,b\:\D \to \C$ be sequences of complex numbers. In this paper the ordered sets will be subsets of the natural numbers. As discussed previously, for $0<r <\infty$ the $V^r$ norm of the sequence $(a_N)$ is defined as \[ V^r(a_N)_{N \in \D} = \sup_{N \in \D} |a_N| + \sup_{J \in \N} \sup_{\substack{N_0<\dotsb<N_J \\ N_j \in \D}} \left( \sum_{j=0}^{J-1} |a_{N_{j+1}}-a_{N_j}|^r \right)^\frac{1}{r}. \] Then $V^r$ is a norm on the space of functions from $\D$ to $\C$ and is decreasing in $r$.

The first fact we will commonly use is that one can bound a $V^r$ norm by an $\ell^r$ norm. Since $\D$ is countable one can then bound an $\ell^r$ norm above by an $\ell^1$ norm. Thus we have
\begin{equation} V^r(a_N)_{N \in \D} \leqsim \left( \sum_{N \in \D} |a_N|^r \right)^\frac{1}{r} \leq \sum_{N \in \D} |a_N|. \label{eqn:variationvsellr} \end{equation}

In order to obtain lower bounds on the elements of the indexing set $\D$, the following fact will also be useful. One has \begin{equation} \label{eqn:orderedpartition} V^r(a_N)_{N \in \D} \leqsim V^r(a_N)_{N \in \D_1} + V^r(a_N)_{N \in \D_2} \end{equation} whenever $\D_1$ and $\D_2$ form a partition of $\D$ such that $N_1<N_2$ whenever $N_1 \in \D_1$ and $N_2 \in \D_2$.

\subsection{Notation}

We will use the convention that $\N = \{1,2,\dotsc\}$. For a real number $x$ we denote the greatest integer less than or equal to $x$ by $\lfloor x \rfloor$. For $x \in \R$ or $x \in \T$ we denote the complex exponential by $e(x) = e^{2\pi i x}$. All logarithms in this paper will be base 2. For a real number $\xi$ we define $\langle \xi \rangle = (1+|\xi|^2)^\frac{1}{2}$. Given $x,y \in \R^m$ we denote their inner product by $x \cdot y$.

Given a finite set $A$ and a function $f\:A \to \C$ we write \[ \E_{x \in A} f(x) = \frac{1}{|A|} \sum_{x \in A} f(x). \] Often, we will take $A = \{1, \dotsc, N\}$ or $A = \{-N, \dotsc, N\}$. We will denote these two sets by $[N]$ and $[[N]]$ respectively. If we are working in $\T$, then $[x]$ and $[[x]]$ will denote the intervals $[0,x]$ and $[-x,x]$ respectively (on identifying $\T$ with $[-\frac{1}{2},\frac{1}{2}])$. We denote the indicator function of a set $A$ by $1_A$. Note in particular that if $A$ is a subset of a finite set $B$ then \[ \E_{x \in B} 1_A(x) = \frac{|A|}{|B|} \] denotes the density of $A$ in $B$.

We will work with the Fourier transform on $\Z$ primarily, but also on $\R$. The Fourier transform and inverse Fourier transform of a function $f\:\Z \to \C$  will be denoted by $\hat{f}$ and $\check{f}$ respectively. For functions on $\R$ we will always denote the Fourier transform and its inverse by $\F_\R$ and $\F_\R^{-1}$ respectively.

Recall that the Haar measure on $\Z$ is the counting measure, and the pontyagrin dual $\Z$ is $\T$ with the normalised Lebesgue measure. The Haar measure on $\R$ is the Lebesgue measure and the pontyagrin dual of $\R$ is itself.

For quantities $X$ and $Y$ we write $X \leqsim Y$ or $X = O(Y)$ to mean that there is a constant $C>0$ such that $|X| \leq C |Y|$. If this constant $C$ depends on parameters $a_1,\dotsc,a_m$ we will denote this by $X \leqsim_{a_1,\dotsc,a_m} Y$ or $X = O_{a_1,\dotsc,a_m}(Y)$. We write $X \sim_{a_1,\dotsc,a_m} Y$ if both $X \leqsim_{a_1,\dotsc,a_m} Y$ and $Y \leqsim_{a_1,\dotsc,a_m} X$.

We now fix $m \in \N$ and Hardy field functions $P_1,\dotsc,P_m$ such that $\{P_1,\dotsc,P_m\} \in \P$. We will denote $\tau(P_i)$ by $c_i$ for each $i \in \{1,\dotsc,m\}$. By an abuse of notation we will denote $\Tilde{A}_N^{P_1,\dotsc,P_m}$ by $A_N$ and $m_{N;\Z}^{P_1,\dotsc,P_m}$ by $m_{N;\Z}$. We allow all implied constants to depend on the parameters $P_1,\dotsc,P_m,m,p,r,\lambda$ appearing in Theorem \ref{thm:integervariation} and will therefore suppress this dependence in implied constants from now on.

\section{Exponential Sums and a Single Scale Minor Arc Estimate}

The point of this section is to define the minor arcs for our operator $A_N$ and to show that if a function $f$ is Fourier supported on the minor arcs then the $\ell^2(\Z^m)$ norm of $A_Nf$ exhibits some decay. Both of these aims will be achieved by first proving an exponential sum bound.

\subsection{An Exponential Sum Estimate}

We begin by showing that if the exponential sum \[ m_{N;\Z} (\xi_1,\dotsc,\xi_m) = \E_{n \in [N]} e(\xi_1 \lfloor P_1(n) \rfloor + \dotsb + \xi_m \lfloor P_m(n) \rfloor) \] is larger in absolute value than some threshold $\delta \in (0,1]$, then either $N$ is small in the sense that $N \leqsim \delta^{-O(1)}$ or the coefficients $\xi_i$ lies in a small interval centred at the origin of width $O(\delta^{-O(1)} |P_i(N)|^{-1})$ for each $i$.

\begin{lemma} \label{lem:exponentialsum}
Let $N \geq 1$, $0<\delta \leq 1$, and $\xi_1, \dotsc, \xi_m \in \T$. Suppose that $\delta \leq |m_{N;\Z}(\xi_1, \dotsc,\xi_m)|$. Then either $N \leqsim \delta^{-O(1)}$ or we have $\|\xi_i\|_\T \leqsim \delta^{-O(1)} |P_i(N)|^{-1}$ for each $i \in \{1, \dotsc, m\}$.
\end{lemma}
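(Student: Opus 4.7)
This is an inverse theorem of Weyl/van der Corput type for a Hardy-field exponential sum, and the strategy is to reduce the sum to a smooth oscillatory integral, apply the first derivative test (Kusmin--Landau), and then use the distinct growth rates of the $P_i$ to peel off bounds on each $\xi_i$. I begin with preliminary reductions: by $1$-periodicity of $e(\xi_i\lfloor P_i(n)\rfloor)$ in $\xi_i$ I may assume $\xi_i \in (-1/2, 1/2]$, so that $|\xi_i| = \|\xi_i\|_\T$. I may also assume $N$ is large enough that the regularity conditions (\ref{eqn:halforder}), (\ref{eqn:derivativeorder}) and (\ref{eqn:Hardybound}) are in force on $[N/2, N]$, since otherwise we are in the trivial regime $N \leqsim \delta^{-O(1)}$. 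A dyadic decomposition and pigeonholing then let me focus on a single block $\sum_{N/2<n\leq N} e(\phi(n))$, where $\phi(t) = \sum_i \xi_i P_i(t)$.

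Next I remove the floor function via a truncated Vaaler approximation of the sawtooth --- incurring errors that remain acceptable after a polynomial adjustment of $\delta$ --- and then compare the resulting smooth sum to $\int_{N/2}^N e(\phi(t))\,dt$ by Abel summation. Applying Kusmin--Landau (the first derivative test) to this integral, the hypothesis $|m_{N;\Z}(\xi)| \geq \delta$ forces $\min_{t \in [N/2, N]}|\phi'(t)| \leqsim \delta^{-O(1)} N^{-1}$. The monotonicity needed for Kusmin--Landau follows since each $P_i'$ is eventually monotone and the distinct growth rates $c_i - 1$ of the $P_i'$ prevent cancellation of the dominant term of $\sum_i \xi_i P_i'$ on $[N/2,N]$.

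The crux is converting this single bound on $|\phi'|$ into individual bounds on each $\xi_i$. Since $P_i'(t) \sim P_i(t)/t$ and the $c_i - 1$ are distinct, $\sum_i \xi_i P_i'(t)$ is dominated on $[N/2,N]$ by its term of largest growth rate, so the derivative bound yields $|\xi_m| \leqsim \delta^{-O(1)}|P_m(N)|^{-1}$ (assuming $c_m$ is the largest type). I then iterate: once $\xi_m$ is constrained, the phase contribution $\xi_m P_m(n)$ is bounded in absolute value by $O(\delta^{-O(1)})$ on $[N/2,N]$, and can be split into short intervals on which the $\xi_m P_m$ modulation is essentially constant, reducing the problem to an $(m-1)$-variable estimate with mildly worse $\delta$. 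Induction on $m$ then yields the stated bound for every $\xi_i$.

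The main obstacle is precisely this last step: after constraining $\xi_m$, the real-valued phase $\xi_m P_m(n)$ is bounded but not small, so it must be handled as an $O(1)$-bounded modulation of the remaining lower-dimensional sum rather than an outright error. Controlling the implicit exponents $O(1)$ in $\delta^{-O(1)}$ so they grow only polynomially through the $m$ induction steps --- and verifying that the dominant-term heuristic for $\sum_i \xi_i P_i'$ genuinely holds at each stage of the induction --- is the main technical bookkeeping of the argument.
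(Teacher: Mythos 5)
Your high-level plan — remove the floor via a Fourier/Vaaler expansion, apply a derivative test to the smooth phase $\phi(t)=\sum_i \xi_i P_i(t)$, exploit the distinct growth rates to identify a dominant term, constrain that $\xi_i$, strip it off, and induct on $m$ — is indeed the skeleton of the paper's argument. However, there is a genuine gap at the step you describe as ``the monotonicity needed for Kusmin--Landau follows since\ldots the distinct growth rates $c_i-1$ of the $P_i'$ prevent cancellation of the dominant term of $\sum_i\xi_i P_i'$.'' This is not true as stated, and the point at which your proof breaks. After the floor-removal step the effective coefficients (call them $a_i=\xi_i+k_i$) can have magnitude as large as $\delta^{-O(1)}$ and \emph{arbitrary signs}, so a linear combination $\sum_i a_i P_i'(t)$ can vanish, change sign, or fail to be dominated by any single term on a substantial part of $[N/2,N]$, even though the $P_i'$ have distinct growth rates. (Think $a_1 P_1'(t)-a_2 P_2'(t)$ with a zero at a point $t_0\in[N/2,N]$: near $t_0$ no single term dominates, and $\phi'$ is neither monotone nor bounded below.) The paper's Lemma~\ref{lem:exceptionalset} is devoted precisely to controlling this: it shows that the set of $t$ where the pointwise-dominance fails has measure $\leqsim(\delta^{O(A)}/A)N$, and is proved by a non-trivial induction on $m$ using a correlation function $\mathcal{C}_\eta(t)=\eta(t)/\sum_i|a_i||P_i(t)|$, the Mean Value Theorem, and a second exceptional-set estimate at a shifted exponent. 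You flag this yourself as ``the main technical bookkeeping,'' but it is really the crux of the lemma, not bookkeeping, and your proposal contains no mechanism for it.

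A secondary but related issue is your choice of the first-derivative test. Kusmin--Landau controls the sum by $\|\phi'\|_{\T}^{-1}$, so even after you establish dominance you must rule out $\phi'$ being close to a \emph{nonzero} integer; with coefficients of size $\delta^{-O(1)}$ and $c_i>1$ the quantity $\phi'(t)\sim a_i P_i(N)/N$ is typically $\gg 1$, so this is a real obstruction. The paper instead applies Van der Corput's $j$-th derivative bound (its Lemma~\ref{lem:vandercorputbound}) with $j=\max_i\lceil c_i\rceil+1$, for which $P_i^{(j)}(t)\sim P_i(N)/N^j\to 0$, so $\phi^{(j)}$ is genuinely small in absolute value and the $\|\cdot\|_\T$ issue disappears; moreover the same high-derivative estimate is already what makes the floor-removal error term manageable in Theorem~\ref{thm:removefloor}, where the first-derivative test would not suffice for $c_i>1$. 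Finally, your subsequent stripping step (``$\xi_m P_m(n)$ is bounded by $O(\delta^{-O(1)})$'') would not obviously let you restart the induction: what the paper actually uses, in Lemma~\ref{lem:ontheorder}, is that the \emph{total variation} of $n\mapsto e((\xi_{i_0}+k_{i_0})P_{i_0}(n))$ on $[N/2,N]$ is $\leqsim\delta^{-O(1)}$ once $\|\xi_{i_0}\|_\T\leqsim\delta^{-O(1)}|P_{i_0}(N)|^{-1}$, so that a single summation by parts removes $P_{i_0}$ at the cost of a $\delta^{O(1)}$ loss and a pass to a subinterval $N'\sim N$; simply knowing the modulation is bounded is not enough.
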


One can also think of this as meaning that the tuple $(\xi_1,\dotsc,\xi_m)$ must lie in a small box of the form $\prod_{i=1}^m [[O(\delta^{-O(1)} |P_i(N)|^{-1})]]$. This view will determine the major arc (and thus the minor arc too) for our operator $A_N$, which will allow us to use the circle method in the next section.

To study the exponential sum $m_{N;\Z}$ we would like to exploit the smoothness of the Hardy field functions $P_1,\dotsc,P_m$. If the floor functions were not in place, we would be able to use the following lemma of Van der Corput that can be found in \cite{chan2010additive}.

\begin{lemma}[Van der Corput] \label{lem:vandercorputbound}
Let $j \geq 2$ be a natural number and $\frac{N}{2} \leq a<b \leq N$. Suppose $\eta \:[a,b] \to \R$ is $j$ times continuously differentiable and satisfies $0<\lambda \leq |\eta^{(j)}(t)| \leq h \lambda$ for all $t \in [a,b]$. Then \begin{equation} \left| \frac{1}{N} \sum_{n=a}^b e(\eta(n)) \right| \leqsim h \left( \lambda^{\frac{1}{J-2}}+N^{-\frac{2}{J}} + (\lambda N^j)^{-\frac{2}{J}} \right) \end{equation} where $J = 2^j$.
\end{lemma}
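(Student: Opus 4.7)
The plan is to induct on $j \geq 2$, with the base case $j = 2$ handled by the classical second derivative test (van der Corput--Kusmin--Landau) and the inductive step carried out by the van der Corput A-process (Weyl differencing), which converts a bound involving the $j$th derivative into one involving the $(j-1)$st derivative of a difference.

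For the base case $j = 2$, since $|\eta''| \in [\lambda, h\lambda]$ is of constant sign on $[a,b]$, the first derivative $\eta'$ is monotone. I would fix a parameter $\varepsilon > 0$ and split the index range $[a,b] \cap \Z$ into a bad set $B_\varepsilon = \{n : \|\eta'(n)\|_\T < \varepsilon\}$ and its complement. Monotonicity of $\eta'$ together with $|\eta''| \geq \lambda$ gives $|B_\varepsilon| \leqsim h\varepsilon N + \varepsilon/\lambda$, and the complement is a union of at most $O(h\lambda N + 1)$ subintervals on which $\|\eta'(t)\|_\T \geq \varepsilon$. On each such subinterval the Kusmin--Landau inequality (obtained by Abel summation against the bounded factor $1/|e(\eta'(n)) - 1| \leqsim \varepsilon^{-1}$) contributes $O(\varepsilon^{-1})$. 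Summing over subintervals and optimising with $\varepsilon \sim \lambda^{1/2}$ yields $|\sum_{n=a}^{b} e(\eta(n))| \leqsim h(N\lambda^{1/2} + N^{1/2} + \lambda^{-1/2})$, which after division by $N$ matches the $j=2$ case.

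For the inductive step, I would invoke the Weyl--van der Corput inequality (a consequence of Cauchy--Schwarz applied to a shifted average): for any $1 \leq H \leq N$,
\[
\left|\sum_{n=a}^{b} e(\eta(n))\right|^2 \leqsim \frac{N^2}{H} + \frac{N}{H}\sum_{h=1}^{H}\left|\sum_{n} e\bigl(\eta(n+h) - \eta(n)\bigr)\right|.
\]
The differenced phase $\eta_h(t) := \eta(t+h) - \eta(t)$ satisfies $\eta_h^{(j-1)}(t) = \int_t^{t+h} \eta^{(j)}(s)\,ds$, so $|\eta_h^{(j-1)}(t)| \in [h\lambda, h^2\lambda]$. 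Applying the inductive hypothesis to $\eta_h$ with parameters $(j-1, h\lambda, h)$ bounds each inner sum, and then optimising $H$ against $\lambda$ and $N$ converts the $(j-1)$-level exponents $1/(2^{j-1}-2)$ and $2/2^{j-1}$ into the $j$-level exponents $1/(J-2)$ and $2/J$ with $J = 2^j$; the middle term $N^{-2/J}$ arises from the trivial bound $N^2/H$ at the balanced choice of $H$.

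The main obstacle I anticipate is the algebraic bookkeeping in the inductive optimisation. The ratio $h$ simultaneously appears as a coefficient multiplying the lower-order derivative after differencing, as the new ratio parameter in the inductive hypothesis, and as the summation index in the A-process; one must verify that these contributions combine to yield only a first-power dependence on $h$ in the conclusion and do not accumulate geometrically over the induction. Boundary effects from shifting indices by $h \leq H$ contribute terms of size $O(H)$ which are harmless since they are absorbed in the main $N^2/H$ term of the Weyl--van der Corput inequality.
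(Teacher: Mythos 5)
The paper does not actually prove Lemma \ref{lem:vandercorputbound}: it is imported as a known estimate (van der Corput's $j$-th derivative test) from the cited reference \cite{chan2010additive}, so there is no internal argument to compare yours against. What you propose is the classical proof of that test --- the Kusmin--Landau/second-derivative estimate for $j=2$, then induction via the Weyl--van der Corput $A$-process --- and in outline it is sound and would indeed yield the stated three-term bound.

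Two pieces of bookkeeping deserve more care than your sketch gives them. First, you conflate the two roles of $h$: after differencing, $\eta_h^{(j-1)}(t)=\int_t^{t+h}\eta^{(j)}(s)\,ds$ satisfies $h\lambda\le|\eta_h^{(j-1)}(t)|\le h\,h_0\lambda$ (using that $\eta^{(j)}$ is continuous and nonvanishing, hence of constant sign), where $h_0$ is the ratio from the hypothesis; the inductive hypothesis must therefore be applied with parameters $(j-1,\,h\lambda,\,h_0)$, not $(j-1,\,h\lambda,\,h)$ as you write. This is not cosmetic: feeding the shift $h$ in as the new ratio would contribute an extra factor of order $H$ after summing over $h\le H$, which destroys the optimisation, whereas with the correct ratio $h_0$ the dependence stays first-power and does not accumulate through the induction --- exactly the danger you flag, so you should resolve it explicitly. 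Second, in the base case the bad set should be counted as $O\bigl((h\lambda N+1)(\varepsilon/\lambda+1)\bigr)$ integers (the monotone $\eta'$ meets $O(h\lambda N+1)$ integer values, each preimage having length $O(\varepsilon/\lambda)$), with the stray terms absorbed by noting the bound is trivial when $\lambda\gtrsim1$; and in the inductive step the optimal choice $H\asymp\lambda^{-2/(J-2)}$ must be truncated to $[1,N]$, with a short case analysis on the size of $\lambda$ relative to $N$ (choosing $H$ differently, or falling back on the trivial bound, in the extreme ranges) to recover precisely the three terms $\lambda^{1/(J-2)}+N^{-2/J}+(\lambda N^j)^{-2/J}$. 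None of this is a conceptual gap, but it is where the actual work lies, and it is carried out in the standard references from which the paper quotes the lemma.
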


Lemma \ref{lem:vandercorputbound} will be key to obtaining principal major arc behaviour with the right bounds. Our first task is therefore to remove the floor function from $m_{N;\Z}(\xi_1,\dotsc,\xi_m)$. We will mimic the method utilised in \cite{chan2010additive} to do so.

\begin{theorem} \label{thm:removefloor}
Let $N \geq 1$, $0<\delta \leq 1$, and $\xi_1,\dotsc,\xi_m \in \T$. Suppose \[ \delta \leq | \E_{n \in [N]} 1_{n>\frac{N}{2}} e(\xi_1 \lfloor P_1(n) \rfloor+\dotsb+\xi_m \lfloor P_m(n) \rfloor) | \] for some $\xi_1, \dotsc, \xi_m$. Then either $N \leqsim \delta^{-O(1)}$ or there exist integers $k_1, \dotsc, k_m$ with $|k_i| \leqsim \delta^{-O(1)}$ such that \[ \delta^{O(1)} \leqsim | \E_{n \in [N]} 1_{n>\frac{N}{2}} e((\xi_1+k_1) P_1(n)+\dotsb+(\xi_m+k_m)P_m(n)) |. \]
\end{theorem}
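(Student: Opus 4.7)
The plan is to linearize the floor functions via a truncated Fourier-series expansion. Writing $\lfloor P_i(n) \rfloor = P_i(n) - \{P_i(n)\}$, we decompose
\[
e\Bigl(\sum_{i=1}^m \xi_i \lfloor P_i(n) \rfloor\Bigr) = e\Bigl(\sum_{i=1}^m \xi_i P_i(n)\Bigr) \prod_{i=1}^m g_{\xi_i}(P_i(n)),
\]
where $g_\xi(y) := e(-\xi \{y\})$ is a $1$-periodic function of $y$ with Fourier coefficients
\[
c_k(\xi) = \frac{1 - e(-\xi)}{2\pi i(\xi + k)}, \qquad |c_k(\xi)| \lesssim \frac{1}{1 + |\xi + k|}.
\]

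First I would fix a threshold $K := \delta^{-C}$ for a sufficiently large absolute constant $C$ and split $g_\xi = T^K_\xi + R^K_\xi$, where $T^K_\xi(y) := \sum_{|k| \le K} c_k(\xi) e(ky)$ is the truncated Fourier series and $R^K_\xi$ is its tail. Expanding $\prod_i g_{\xi_i}(P_i(n)) = \prod_i (T^K_{\xi_i} + R^K_{\xi_i})(P_i(n))$ yields $2^m$ terms, of which exactly one (the \emph{main term}) has every factor equal to some $T^K_{\xi_i}$, while the remaining $2^m - 1$ \emph{error terms} each contain at least one factor $R^K_{\xi_j}$. Substituting back, the main-term contribution becomes
\[
\sum_{|k_1|,\dotsc,|k_m| \le K} \Bigl(\prod_i c_{k_i}(\xi_i)\Bigr)\, \E_{n \in [N]} 1_{n > N/2}\, e\Bigl(\sum_i (\xi_i + k_i) P_i(n)\Bigr).
\]
If this main term has modulus at least $\delta/2$, then since $|c_{k_i}(\xi_i)| \leq 1$ and the sum has $O(K^m)$ terms, pigeonholing produces integers $|k_i| \le K \lesssim \delta^{-O(1)}$ with
\[
\Bigl|\E_{n \in [N]} 1_{n > N/2}\, e\Bigl(\sum_i (\xi_i + k_i) P_i(n)\Bigr)\Bigr| \gtrsim \delta K^{-m} = \delta^{O(1)},
\]
as desired.

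It remains to show every error term can be made smaller than $\delta/2^{m+2}$ once $N \gtrsim \delta^{-O(1)}$ (otherwise the dichotomy already holds trivially). A typical error term takes the shape
\[
\E_{n \in [N]} 1_{n > N/2}\, e\Bigl(\sum_i \xi_i P_i(n)\Bigr)\, R^K_{\xi_j}(P_j(n)) \prod_{i \ne j} \bigl(T^K_{\xi_i} \text{ or } R^K_{\xi_i}\bigr)(P_i(n));
\]
the $T^K_{\xi_i}$ and $R^K_{\xi_i}$ factors are bounded pointwise by $\|g_{\xi_i}\|_\infty + \sum_{|k| \le K} |c_k(\xi_i)| \lesssim \log K$. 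By Cauchy--Schwarz it then suffices to bound $\E_{n \in [N]}|R^K_{\xi_j}(P_j(n))|^2$, which on expanding the square equals
\[
\sum_{|k|, |k'| > K} c_k(\xi_j) \overline{c_{k'}(\xi_j)}\, \E_{n \in [N]} 1_{n > N/2}\, e\bigl((k - k') P_j(n)\bigr).
\]
The diagonal $k = k'$ contributes $\sum_{|k| > K} |c_k(\xi_j)|^2 \lesssim K^{-1}$, while the off-diagonal terms are handled by Lemma \ref{lem:vandercorputbound} applied to the smooth phase $(k - k') P_j$. Because $P_j$ has polynomial growth and the regularity (\ref{eqn:derivativeorder}), an appropriate derivative of $(k-k')P_j$ is eventually monotonic and of controlled size, producing power-saving decay $N^{-\eta}$ for some $\eta > 0$ whenever $N \gtrsim \delta^{-O(1)}$. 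Choosing $C$ large enough absorbs the $\log K$ losses.

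The main obstacle is this last step: the Gibbs phenomenon precludes any useful pointwise bound on $R^K_\xi$, so one must extract quasi-equidistribution of $(k - k')P_j(n) \bmod 1$ from the smoothness of $P_j$ via the Van der Corput estimate. The delicate point is to balance the three parameters $\delta$, $K$, and $N$ so that every error beats a polynomial power of $\delta$; this closely follows the blueprint of Chan \cite{chan2010additive}.
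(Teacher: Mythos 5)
Your high-level strategy—remove the floor via a truncated Fourier expansion of $e(-\xi\{y\})$ at height $K=\delta^{-O(1)}$, pigeonhole the main term, and push the error term through Van der Corput—is exactly the strategy of the paper. But the way you control the error term has a genuine gap at the Cauchy--Schwarz step. You want to bound $\E_n|R^K_{\xi_j}(P_j(n))|^2 = \sum_{|k|,|k'|>K}c_k\bar{c_{k'}}\,\E_n e((k-k')P_j(n))$. The coefficients satisfy only $|c_k|\lesssim 1/|k|$, so after summing out one variable the off-diagonal weight for a fixed difference $d=k-k'$ is of size roughly $\min(K^{-1},\tfrac{\log|d|}{|d|})$, which is not absolutely summable in $d$. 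Meanwhile, the Van der Corput bound you invoke produces a term $\lambda^{1/(J-2)}$ with $\lambda\sim |d|\,|P_j^{(j_0)}(N)|$ that \emph{grows} with $|d|$; there is no uniform power-saving in $N$ over all $d$, and using the trivial bound for large $|d|$ still leaves a divergent $\sum_d \tfrac{\log|d|}{|d|}$. So the off-diagonal piece cannot be closed term-by-term the way you sketch, and the Gibbs-phenomenon obstruction you flag is not actually circumvented.

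The paper avoids this by never squaring the tail. It uses the Vaughan-type truncation $e(-\xi\{x\}) = a(\xi)\sum_{|k|\leq K}\frac{e(kx)}{\xi+k}+O\bigl(\log K\,\min(1,\tfrac{1}{K\|x\|_\T})\bigr)$ with $|a(\xi)|\leq\|\xi\|_\T$, so the error is controlled \emph{pointwise} by the function $\min(1,\tfrac{1}{K\|x\|_\T})$. That function is then expanded into its own Fourier series, whose coefficients $b_k$ decay like $\min(\tfrac{\log K}{K},\tfrac{K}{k^2})$. The crucial point is the $k^{-2}$ decay: after applying Lemma \ref{lem:vandercorputbound}, the resulting single sum $\sum_{|k|>K}\tfrac{K}{k^2}\bigl(|k|\,|P_i^{(j)}(N)|\bigr)^{1/(J-2)}$ converges absolutely because $2-\tfrac{1}{J-2}>1$, giving the required power saving. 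Your $L^2$ route produces coefficients decaying only like $1/|d|$, which is exactly one power short of being able to absorb the Van der Corput loss. If you want to keep a Cauchy--Schwarz flavour you would need to replace the sharp truncation $T^K$ by a smoothed one (e.g.\ a de la Vall\'ee Poussin mean) so that the residual $R^K$ has faster-decaying coefficients; as written, the argument does not close.

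Two smaller points: the rearrangement $\E_n|R^K|^2=\sum_{k,k'}c_k\bar{c_{k'}}\E_n e((k-k')P_j(n))$ is not justified since the double series is not absolutely convergent, and the pigeonhole loss in the main term is $K^{-m}$ rather than the stated $K^{-m}$ times $\delta$ unless you are also careful with the factor $\prod_i|c_{k_i}(\xi_i)|$, though the paper handles this by first using $|a(\xi_i)|\leq\|\xi_i\|_\T\leq|\xi_i+k_i|$ to drop the coefficients before pigeonholing.
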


\begin{proof}
Assume that $\xi_1, \dotsc,\xi_m \neq 0$. For notational ease we will also assume all sums in $n$ are supported on $n>\frac{N}{2}$. We begin by removing the floor function. Write \[ m_{N;\Z}(\xi_1, \dotsc,\xi_m) = \E_{n \in [N]} e(\xi_1 P_1(n)+\dotsb+\xi_m P_m(n)) \prod_{i=1}^m e(-\xi_i \{P_i(n)\} ). \] Expanding the function $x \mapsto e(-\xi \{x\})$ (for $\xi \neq 0$) into its Fourier series, for any $K \in \N$ one has \[ e(-\xi \{x\}) = a(\xi) \sum_{|k| \leq K} \frac{e(kx)}{\xi+k} + O\left(\log K \min \left(1,\frac{1}{K\|x\|_\T} \right) \right) \] where $|a(\xi)| \leq \|\xi\|_\T$.  Thus \begin{multline} \label{eqn:approximatebycontinuous} m_{N;\Z}(\xi_1,\dotsc,\xi_m) = \sum_{|k_1|,\dotsc,|k_m| \leq K} \E_{n \in [N]} e((\xi_1+k_1) P_1(n) + \dotsb + (\xi_m+k_m)P_m(n)) \prod_{i=1}^m \frac{a(\xi_i)}{(\xi_i+k_i)} \\ + O \left( K^{m-1} \log K \max_{1 \leq i \leq m} \E_{n \in [N]} \min \left(1,\frac{1}{K\|P_i(n) \|_\T} \right) \right). \end{multline} We first deal with the error term. We may further expand the function \[ \min \left(1,\frac{1}{K\|x\|_\T} \right) = \sum_{k \in \Z} b_k e(kx) \] into its Fourier series, where its coefficients satisfy \[ b_k \leqsim \min \left(\frac{\log K}{K}, \frac{K}{k^2} \right). \] Thus we may bound \[ \E_{n \in [N]} \min \left(1, \frac{1}{\|P_i(n)\|_\T} \right) \leqsim \frac{\log K}{K} + \sum_{0<|k| \leq K} \frac{\log K}{K} |\E_{n \in [N]} e(kP_i(n))| + \sum_{|k|>K} \frac{K}{k^2} |\E_{n \in [N]} e(kP_i(n))|. \] We take $K = A \delta^{-A}$ for a constant $A>0$ sufficiently large so that $\frac{\log K}{K}$ may be subtracted and absorbed into $\delta$. By the assumption (\ref{eqn:halforder}) made on $\P$ and Lemma \ref{lem:vandercorputbound}, taking $j=\lceil c_i \rceil+1$, we have \[ \sum_{0<|k| \leq K} \frac{\log K}{K} |\E_{n \in [N]} e(kP_i(n))| \leqsim  \log K( K^\frac{1}{2^j-2} |P_i^{(j)}(N)|^\frac{1}{2^j-2} + N^{-\frac{1}{2^{j-1}}} + |P_i^{(j)}(N)N^j|^{-\frac{1}{2^{j-1}}}) \] where we bounded $|k| \geq 1$ in the third term. Similarly, we may bound \[ \sum_{|k|>K} |\E_{n \in [N]} e(k P_i(n))| \leqsim K^\frac{1}{2^j-2} |P_i^{(j)}(N)|^\frac{1}{2^j-2} + N^{-\frac{1}{2^{j-1}}} + |P_i^{(j)}(N)N^j|^{-\frac{1}{2^{j-1}}}. \] With $c_i=t(P_i)$, we note that for any $\varepsilon_1,\varepsilon_2>0$ we have \[ K^\frac{1}{2^j-2} |P_i^{(j)}(N)|^\frac{1}{2^j-2} + N^{-\frac{1}{2^{j-1}}} + |P_i^{(j)}(N)N^j|^{-\frac{1}{2^{j-1}}} \leqsim_{\varepsilon_1, \varepsilon_2} K^\frac{1}{2^j-2} N^\frac{c_i-j+\varepsilon_1}{2^j-2} + N^{-\frac{1}{2^{j-1}}} + N^\frac{\varepsilon_2-c_i}{2^{j-1}}. \] Take $\varepsilon_1 = \frac{1}{2}$ and $\varepsilon_2=\frac{c_i}{2}$. Then taking $A>0$ is sufficiently large and ensuring that $N \geqsim \delta^{-O(1)}$, we see that we have a genuine error term in (\ref{eqn:approximatebycontinuous}), and thus \[ \delta \leqsim \sum_{|k_1|,\dotsc,|k_m| \leq K} |\E_{n \in [N]} e((\xi_1+k_1)P_1(n)+\dotsb+(\xi_m+k_m)P_m(n))| \prod_{i=1}^m \frac{|a(\xi_i)|}{|\xi_i+k_i|}. \] Note now that $|\xi_i+k_i| \geq \|\xi_i\|_\T$ since $k_i$ is an integer, and $|a(\xi_i)| \leq \|\xi_i\|_\T$. Thus we have \[ \delta \leqsim \sum_{|k_1|,\dotsc,|k_m| \leq K} |\E_{n \in [N]} e((\xi_1+k_1)P_1(n)+\dotsb+(\xi_m+k_m)P_m(n))|. \] We now apply the pigeonhole principle to deduce the existence of $k_1,\dotsc,k_m \in [[K]]$ such that \begin{equation} \label{eqn:continuousinequality} \delta^{O(1)} \leqsim |\E_{n \in [N]} e((\xi_1+k_1)P_1(n)+\dotsb+(\xi_m+k_m)P_m(n))|, \end{equation} completing the proof.
\end{proof}

If one tries to use the Van der Corput bound in Lemma \ref{lem:vandercorputbound}, it is easy to obtain the right major arc bounds if $m=1$ (see the proof of the next lemma to see why). The point of the next lemma is to show that if the $j$\textsuperscript{th} derivative of the function \[ t \mapsto (\xi_1+k_1) P_1(t) + \dotsb +(\xi_m+k_m) P_m(t) \] behaves like $(\xi_{i_0}+k_{i_0}) P_{i_0}^{(j)}(t)$ for a single $i_0$ on a sub-interval (in a sense reducing back to the $m=1$ case), then we obtain major arc behaviour not only for $\xi_{i_0}$, but also for \emph{every} $\xi_i$.

\begin{lemma} \label{lem:ontheorder}
Suppose \[ \delta \leq \left|\frac{1}{N} \sum_{n=a}^b 1_{n>\frac{N}{2}} e((\xi_1+k_1) P_1(n) +\dotsb+(\xi_m+k_m) P_m(n) ) \right| \] where $\frac{N}{2} \leq a<b \leq N$, $\xi_1, \dotsc, \xi_m \in \T$, and $k_1,\dotsc,k_m \in \Z$ satisfy $|k_1|, \dotsc, |k_m| \leqsim \delta^{-O(1)}$. Let $\eta(t) = (\xi_1+k_1) P_1(t)+\dotsb+(\xi_m+k_m)P_m(t)$. If there exists $c_{i_0}+1 \leq j \leqsim 1$ such that \begin{equation} \label{eqn:ontheorder} |\xi_{i_0}+k_{i_0}| \delta^{O(1)} |P_{i_0}^{(j)}(N)| \leqsim_{j} |\eta^{(j)}(t)| \leqsim_{j} |\xi_{i_0}+k_{i_0}|  |P_{i_0}^{(j)}(N)| \end{equation} for some $i_0 \in \{1, \dotsc, m\}$, then $\|\xi_i\|_\T \leqsim \delta^{-O(1)} |P_i(N)|^{-1}$ for all $i \in \{1, \dotsc, m\}$.
\end{lemma}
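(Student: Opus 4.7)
The plan is to apply Van der Corput (Lemma~\ref{lem:vandercorputbound}) to the sum, extract the desired bound on $\|\xi_{i_0}\|_\T$ first, and then use the distinct growth rates of the $P_i$'s to propagate this to every other $i$.

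For the first step, I would apply Lemma~\ref{lem:vandercorputbound} with the $j$ from the hypothesis, taking $\lambda = |\xi_{i_0}+k_{i_0}| \delta^{O(1)} |P_{i_0}^{(j)}(N)|$ and $h \lesssim \delta^{-O(1)}$, both read off (\ref{eqn:ontheorder}) after iterating (\ref{eqn:derivativeorder}) to see that $|P_{i_0}^{(j)}(t)| \sim |P_{i_0}^{(j)}(N)|$ on $[N/2, N]$. Since the sum is assumed to exceed $\delta$ in modulus, at least one of the three terms of the Van der Corput bound must be $\gtrsim \delta$. The middle term gives $N \lesssim \delta^{-O(1)}$ outright. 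The first term gives $\lambda \gtrsim \delta^{O(1)}$; combined with the a priori bound $|\xi_{i_0}+k_{i_0}| \lesssim \delta^{-O(1)}$ (since $\xi_{i_0} \in \T$ and $|k_{i_0}| \lesssim \delta^{-O(1)}$) and (\ref{eqn:Hardybound}) applied to $P_{i_0}^{(j)}$, this forces $N^{c_{i_0}-j+\varepsilon} \gtrsim \delta^{O(1)}$, and since $j \geq c_{i_0}+1$ with $\varepsilon$ chosen small, we again get $N \lesssim \delta^{-O(1)}$. The remaining case gives $\lambda N^j \lesssim \delta^{-O(1)}$, i.e.\ $|\xi_{i_0}+k_{i_0}| |P_{i_0}^{(j)}(N)| \lesssim \delta^{-O(1)} N^{-j}$; iterating (\ref{eqn:derivativeorder}) to write $|P_{i_0}^{(j)}(N)| \sim |P_{i_0}(N)|/N^j$ simplifies this to $|\xi_{i_0}+k_{i_0}| \lesssim \delta^{-O(1)} |P_{i_0}(N)|^{-1}$. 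In both $N$-small alternatives the full conclusion is trivial upon enlarging the implicit constants in terms of $P_1,\dotsc,P_m$, since (\ref{eqn:Hardybound}) then forces $|P_i(N)|^{-1}$ to be at least a positive power of $\delta$.

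To extend the bound from $i_0$ to every $i$, I would substitute the Van der Corput estimate into the upper bound of (\ref{eqn:ontheorder}) to obtain $|\eta^{(j)}(t)| \lesssim \delta^{-O(1)} N^{-j}$ uniformly for $t \in [a, b]$. Expanding $\eta^{(j)}(t) = \sum_{i=1}^m (\xi_i + k_i) P_i^{(j)}(t)$ and evaluating at $m$ distinct sample points $t_1, \dotsc, t_m \in [a,b]$ produces an $m \times m$ linear system for the unknowns $\xi_i + k_i$, whose coefficient matrix reduces, after appropriate diagonal rescaling, to a generalized Vandermonde-type matrix whose entries are essentially $(t_l/N)^{c_i - j}$ (up to Hardy field slowly varying corrections). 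Since the growth rates $c_i$ are pairwise distinct and the $t_l/N$ are $m$ distinct points in $[1/2, 1]$, this matrix is invertible with an inverse whose operator norm depends only on $c_1, \dotsc, c_m$ and $m$. Solving the system and inserting the uniform bound on the $\eta^{(j)}(t_l)$ yields $|\xi_i + k_i|\,|P_i^{(j)}(N)| \lesssim \delta^{-O(1)} N^{-j}$ for every $i$, and therefore $\|\xi_i\|_\T \leq |\xi_i + k_i| \lesssim \delta^{-O(1)} |P_i(N)|^{-1}$, as required. The main obstacle I expect is making precise the conditioning of this generalized Vandermonde matrix: the Hardy field hypotheses only control $P_i^{(j)}$ up to slowly varying $t^\varepsilon$ factors, and one has to verify that these corrections neither destroy invertibility nor inflate the bound on the inverse uniformly in $N$. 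The Van der Corput step itself, and the resulting case analysis, is by comparison routine once (\ref{eqn:ontheorder}) has been plugged in.
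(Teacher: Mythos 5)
Your step one (Van der Corput to control $\|\xi_{i_0}\|_\T$, with the ``$N$ small'' alternatives absorbed by enlarging the implicit constant) matches the paper's base step exactly. Where you diverge is in propagating the bound to the other indices. The paper observes that once $|\xi_{i_0}+k_{i_0}|\lesssim\delta^{-O(1)}|P_{i_0}(N)|^{-1}$, the phase $\psi(n)=e((\xi_{i_0}+k_{i_0})P_{i_0}(n))$ has total variation $\lesssim\delta^{-O(1)}$ on $[a,b]$ (by (\ref{eqn:derivativeorder})), so summation by parts removes it from the exponential sum; one is then left with a large exponential sum in $m-1$ Hardy field phases, and the conclusion for $i\neq i_0$ follows by the inductive version of the whole circle-method machinery (Lemma~\ref{lem:exceptionalset} applied to $\eta'=\sum_{i\neq i_0}(\xi_i+k_i)P_i$ followed by the $m-1$ case of this lemma). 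Your route instead linearizes at the level of $\eta^{(j)}$: from Van der Corput and the \emph{upper} bound in (\ref{eqn:ontheorder}) you get $|\eta^{(j)}(t)|\lesssim\delta^{-O(1)}N^{-j}$ uniformly on $[a,b]$, and then you invert a sampled linear system to recover each $|(\xi_i+k_i)P_i^{(j)}(N)|$. This avoids the induction on $m$ entirely, which is an appealing simplification of the logical structure.

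However, the obstacle you flag at the end is a genuine one and you should not treat the inversion as routine. Two points need care. First, the hypothesis gives $b-a\geq \delta N-1$ only (from $\delta\leq\frac{1}{N}|\sum_{n=a}^{b}\cdots|$), so the sample points $t_1,\dotsc,t_m$ may be confined to an interval of length $\sim\delta N$; the raw generalized Vandermonde determinant $\det(s_l^{\,c_i-j})$ is then only of size $\delta^{\binom{m}{2}}$, which must itself be absorbed into the $\delta^{-O(1)}$ factor in the conclusion. Second, and more seriously, the statement ``inverse whose operator norm depends only on $c_1,\dotsc,c_m$ and $m$'' is not quite right: since $P_i^{(j)}(t)/P_i^{(j)}(N)$ is only $(t/N)^{c_i-j}$ up to slowly varying corrections that are $O(1/\log N)$ at best, a crude pointwise comparison of your matrix to the idealized $(s_l^{\,c_i-j})$ loses control once $\delta^{\binom{m}{2}}\lesssim 1/\log N$, which happens whenever $N$ is merely polynomial in $\delta^{-1}$. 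The way to rescue the argument is to \emph{not} perturb entrywise but instead to Taylor expand each $h_i(t)=P_i^{(j)}(t)/P_i^{(j)}(N)$ to order $m-1$ around $t_1$, so that your matrix factors (up to a controlled remainder) as a Vandermonde in the spacings $(t_l-t_1)$ times the Wronskian $\bigl(h_i^{(p)}(t_1)\bigr)_{p,i}$; iterated (\ref{eqn:derivativeorder}) gives $h_i^{(p)}(t_1)N^p\to (c_i-j)(c_i-j-1)\cdots(c_i-j-p+1)$, so the Wronskian factor tends to a fixed nonzero falling-factorial Vandermonde in the exponents $c_i-j$, while the spacing Vandermonde contributes exactly the $\delta^{\binom{m}{2}}$ you can afford. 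The derivative estimates needed here are precisely the iterated forms of (\ref{eqn:derivativeorder}) rather than the pointwise (\ref{eqn:Hardybound}), and this is what your sketch is missing; without them the argument as written has a gap. With that repair the linear-algebra route does go through, but it trades the paper's one line of summation by parts for a quantitative Wronskian lemma, so it is not clearly the shorter path.
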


\begin{proof}
We proceed by induction. If $m=1$ then we have $|\eta^{(j)}(t)| \sim |\xi_1+k_1| |P_1^{(j)}(N)|$ by (\ref{eqn:halforder}) where $j = \lceil c_1 \rceil+1$. Then Lemma \ref{lem:vandercorputbound} implies \[ \delta \leqsim (|\xi_1+k_1||P_1^{(j)}(N)|)^\frac{1}{J-2} + N^{-\frac{2}{J}} + (|\xi_1+k_1||P_1^{(j)}(N)|N^j)^{-\frac{2}{J}} \] where $J = 2^j$. By bounding $|\xi_1+k_1| \leqsim \delta^{-O(1)}$ and ensuring $N \geqsim \delta^{-O(1)}$, we may absorb the first two terms on the right hand side so that \[ \delta \leqsim (|\xi_1+k_1||P_1^{(j)}(N)|N^j)^{-\frac{2}{J}}. \] Rearranging shows that $|\xi_1+k_1| \leqsim \delta^{-\frac{J}{2}} |P_1^{(j)}(N)|^{-1} N^{-j}$. Since $k_1$ is an integer we have $\|\xi_1\|_\T \leq |\xi_1+k_1|$, and thus by assumption (\ref{eqn:derivativeorder}) we have $\|\xi_1\|_\T \leqsim \delta^{-\frac{J}{2}} |P_1(N)|^{-1}$. This completes the proof if $m=1$, and if $m>1$ this provides the base case for the induction.

Now suppose $m>1$ and that the lemma holds for all elements of $\P$ with strictly less than $m$ elements. We apply Lemma \ref{lem:vandercorputbound} with the assumption (\ref{eqn:ontheorder}) to bound \begin{multline*} \left|\frac{1}{N} \sum_{n=a}^b 1_{n>\frac{N}{2}} e((\xi_1+k_1) P_1(n)+\dotsb+(\xi_m+k_m)P_m(n)) \right| \\ \leqsim \delta^{-O(1)} (|(\xi_{i_0}+k_{i_0})|\delta^{O(1)} |P_{i_0}^{(j)}(N)|)^\frac{1}{J-2} + N^{-\frac{2}{J}} + (|(\xi_{i_0}+k_{i_0}| \delta^{O(1)} |P_{i_0}^{(j)}(N)|N^j)^{-\frac{2}{J}}. \end{multline*} We may bound $|\xi_{i_0}+k_{i_0}| \leqsim \delta^{-O(1)}$. Then on ensuring that $N \geqsim \delta^{-O(1)}$ we may absorb the first two terms on the right hand side to bound \[ \delta^{O(1)} \leqsim (|(\xi_{i_0}+k_{i_0}| \delta^{O(1)} |P_{i_0}^{(j)}(N)| N^j)^{-\frac{2}{J}}. \] By rearranging and (\ref{eqn:derivativeorder}), we see that \[ |\xi_{i_0}+k_{i_0}| \leqsim \delta^{-O(1)} |P_{i_0}(N)|^{-1}. \] Since $k_{i_0}$ is an integer we have $\|\xi_{i_0}\|_\T \leq |\xi_{i_0}+k_{i_0}|$ and so $\|\xi_{i_0}\|_\T \leqsim \delta^{-O(1)} |P_{i_0}(N)|^{-1}$. Now let $\psi(n) = e((\xi_{i_0}+k_{i_0})P_{i_0}(n))$. For all $n$ we clearly have $|\psi(n)|=1$. Moreover, for all $n$ we have \[ \sum_{n=a}^b |\psi(n+1)-\psi(n)| \leqsim \sum_{n=a}^b |(\xi_{i_0}+k_{i_0})| |P_{i_0}(n+1)-P_{i_0}(n)| \leqsim \sum_{n=a}^b \delta^{-O(1)} |P_{i_0}(N)|^{-1} |P_{i_0}'(N)| \sim \delta^{-O(1)} \] by (\ref{eqn:derivativeorder}) again. Summation by parts therefore yields \[ \delta \leqsim \delta^{-O(1)} \sup_{a-1 \leq N' \leq b} \left| \frac{1}{N} \sum_{n=1}^{N'} 1_{n>\frac{N}{2}} e \left( \sum_{\substack{1 \leq i \leq m \\ i \neq i_0}} (\xi_i+k_i) P_i(n) \right) \right| \] and so there exists $N' \sim N$ such that \[ \delta^{O(1)} \leqsim \left| \frac{1}{N'} \sum_{n=1}^{N'} 1_{n>\frac{N}{2}} e \left( \sum_{\substack{1 \leq i \leq m \\ i \neq i_0}} (\xi_i+k_i) P_i(n) \right) \right|. \] Since $\{P_1,\dotsc,P_m\} \setminus\{P_{i_0}\} \in \P$, by induction we obtain $\|\xi_i\|_\T \leqsim \delta^{-O(1)} |P_{i}(N)|^{-1}$ for all $i \in \{1, \dotsc, m\}$.
\end{proof}

Recall that any Hardy field function $P$ eventually has a constant sign, so eventually $P = \pm P$. Thus any linear combination $\sum_{i=1}^m a_i P_i$ of Hardy field functions can be expressed as $\sum_{i=1}^m a_i' |P_i|$, a linear combination of their absolute values where $a_i' = \pm a_i$. Expressing it in this way means we only need to worry about the sign of $a_1',\dotsc,a_m'$ and not the sign of $P_1,\dotsc,P_m$ too. Despite the absolute values, because Hardy field functions have eventual constant sign we see that expressing linear combinations in this form does not violate any smoothness assumptions.

Let $\eta(t) = \sum_{i=1}^m (\xi_i+k_i)P_i(t)$. Then for all $t$ sufficiently large we can write \[ \eta^{(j)}(t) = \sum_{i=1}^m \alpha_{i,j}(\xi_i+k_i) |P_i^{(j)}(t)| \] for some coefficients $\alpha_{i,j} \in \{\pm 1\}$. If for some $j \geq \max_{1 \leq i \leq m} c_i+1$ the coefficients $\alpha_{i,j}(\xi_i+k_i)$ all have the same sign, then one can easily deduce \[ \max_{1 \leq i \leq m} |\xi_i+k_i||P_{i_0}^{(j)}(N)| \leqsim |\eta^{(j)}(t)| \leqsim \max_{1 \leq i \leq m} |\xi_i+k_i| |P_{i_0}^{(j)}(N)| \] for all $t \in [\frac{N}{2},N]$ hence (\ref{eqn:ontheorder}) holds and Lemma \ref{lem:ontheorder} can be applied. However, we cannot expect this to hold if at least one of the coefficients has an opposite sign to the rest. Thus our first task is to find a suitable interval on which this does hold. To facilitate this, we will find an exceptional set whose density in $[\frac{N}{2},N]$ can be made arbitrarily small. We will call any linear combination of Hardy field functions $Q_1,\dotsc,Q_k$ with $\{Q_1,\dotsc,Q_k\} \in \P$ a \emph{generalised polynomial}.

\begin{lemma} \label{lem:exceptionalset}
Let $N \geq 1$, $0<\delta<1$, and $A>0$. Let $\eta(t) = a_1 |P_1(t)|+\dotsb+a_m |P_m(t)|$ be a generalised polynomial defined on $[\frac{N}{2},N]$. If $E_{\eta,A}$ denotes the complement of the set of $t \in [\frac{N}{2},N]$ for which we have $\frac{\delta^A}{A} \max_{1 \leq i \leq m} |a_i|  |P_i(t)| \leq |\eta(t)| \leqsim \max_{1\leq i \leq m} |a_i| |P_i(t)|$ then $|E_{\eta,A}| \leqsim \frac{\delta^{O(A)}}{A} N$.
\end{lemma}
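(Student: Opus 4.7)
The plan is to proceed by induction on the number $m$ of terms. The upper bound $|\eta(t)|\leq m\max_i|a_i||P_i(t)|$ is immediate from the triangle inequality, so I focus entirely on the lower bound. The base case $m=1$ is trivial with $E_{\eta,A}=\emptyset$ since $|\eta(t)|=|a_1||P_1(t)|=\max_i|a_i||P_i(t)|$.

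For the inductive step with $m\geq 2$, reorder so that $c_1<\dotsb<c_m$ and, without loss of generality, take all $a_i\neq 0$ (else apply the inductive hypothesis directly to the nontrivial sub-collection). The key structural input I would exploit is that, by (\ref{eqn:halforder}) and the distinctness of the $c_i$, each ratio $|P_i(t)|/|P_m(t)|$ for $i<m$ lies in a Hardy field and has type $c_i-c_m<0$; in particular it is eventually strictly monotone, decreasing to zero on $[N/2,N]$. Consequently the set $\{t\in[N/2,N]:|a_m||P_m(t)|\geq 3m\sum_{i<m}|a_i||P_i(t)|\}$ is a terminal sub-interval $[T_\ast,N]$ (possibly empty, or all of $[N/2,N]$). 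On this sub-interval the reverse triangle inequality immediately gives $|\eta(t)|\geq|a_m||P_m(t)|/2\geq M(t)/2$, so it contributes nothing to $E_{\eta,A}$.

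On the complementary sub-interval $[N/2,T_\ast)$ one has $|a_m||P_m(t)|\leqsim_m\tilde{M}(t):=\max_{i<m}|a_i||P_i(t)|$, hence $M(t)\sim_m\tilde{M}(t)$. Setting $\tilde\eta(t):=\sum_{i<m}a_i|P_i(t)|$ and invoking the inductive hypothesis produces an exceptional set $\tilde{E}\subseteq[N/2,T_\ast)$ of measure $\leqsim\delta^{O(A)}N/A$ off which $|\tilde\eta(t)|\geqsim(\delta^A/A)\tilde{M}(t)$. Off $\tilde{E}$, whenever additionally $|\tilde\eta(t)|\geq 2|a_m||P_m(t)|$, the reverse triangle inequality again delivers $|\eta(t)|\geq|\tilde\eta(t)|/2\geqsim(\delta^A/A)M(t)$.

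The main obstacle is the residual cancellation regime, where $|\tilde\eta(t)|$ and $|a_m||P_m(t)|$ are comparable in size and may nearly cancel. After factoring $\eta(t)=|P_m(t)|(a_m+h(t))$ with $h(t)=\tilde\eta(t)/|P_m(t)|$, the inequality $|\eta(t)|<(\delta^A/A)M(t)$ becomes $|a_m+h(t)|\leqsim\delta^A/A$ in this regime, confining $h(t)$ to a tiny neighbourhood of the nonzero constant $-a_m$. Since $h$ is a sum of $m-1$ monotone Hardy ratios of distinct negative types and the constant $-a_m$ belongs to the enriched Hardy field $\{1,|P_1/P_m|,\dotsc,|P_{m-1}/P_m|\}$, one can bound this level set either by a secondary induction on the generalized polynomial $a_m+h(t)$ in this enriched Hardy field, or by a Chebyshev-system zero-counting argument exploiting the monotonicity of each summand. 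Combining the three sub-regimes—and accepting a fixed multiplicative loss in the exponent of $\delta$ at each inductive step—yields the claimed bound $|E_{\eta,A}|\leqsim\delta^{O(A)}N/A$.
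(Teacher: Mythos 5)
Your proposal correctly handles the two easy sub-regimes — where $|a_m||P_m(t)|$ dominates the tail, or where $|\tilde\eta(t)|$ dominates and the inductive hypothesis applies cleanly — but the crux of the lemma lives in exactly the third regime, and that is where your argument stops being a proof and becomes a list of hopes. You concede this yourself ("the main obstacle is the residual cancellation regime") and then offer two vague alternatives, neither of which closes the gap.

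Consider the "secondary induction" suggestion. You propose applying the lemma's induction to $a_m + h(t) = a_m\cdot 1 + \sum_{i<m} a_i |P_i(t)|/|P_m(t)|$. But this expression has $m$ summands, not $m-1$: the constant $a_m$ counts as a term. The functions $1, |P_1/P_m|, \dotsc, |P_{m-1}/P_m|$ have types $0, c_1-c_m, \dotsc, c_{m-1}-c_m$ which are indeed distinct, but they are all non-positive, violating the polynomial-growth hypothesis ($0 < \tau < \infty$) needed for the lemma; and more fundamentally, the number of terms has not decreased, so the induction never terminates. The Chebyshev-system alternative has a different problem: zero-counting (at most $m-1$ zeros) is a qualitative statement that bounds the number of connected components of the level set, not its measure, and to convert it into the quantitative $|E_{\eta,A}| \leqsim \delta^{O(A)}N/A$ bound you would need a Remez-type inequality for this particular Hardy-field Chebyshev system with constants you control. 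That is not automatic, and you give no argument for it.

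The paper's proof resolves precisely this difficulty through a more delicate mechanism. It introduces the correlation $\CC_\eta(t) = \eta(t)/\sum_i|a_i||P_i(t)|$, applies the Mean Value Theorem to bound $|t-t_0|$ in terms of $|\CC_\eta'(\xi)|$, and then performs an algebraic manipulation on the numerator of $\CC_\eta'$ — using the smallness of $\CC_\eta$ itself to substitute for one of the summands — so that the numerator factors through a new generalised polynomial $\chi$ having \emph{strictly fewer than $m$} terms. Applying the inductive hypothesis to $\chi$ gives a lower bound on $|\CC_\eta'|$ off a small exceptional set, which then bounds the interval where $\CC_\eta$ is small. It is this reduction to a genuinely shorter generalised polynomial that makes the induction close; your proposal never achieves that, and without it the argument in the cancellation regime is incomplete.
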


\begin{proof}
We proceed by induction. If $m=1$ then $E_{\eta,A}$ is empty and we are done. Now suppose $m>1$. Without loss of generality assume $a_m>0$. We define the correlation function \[ \CC_\eta(t) = \frac{\eta(t)}{\sum_{i=1}^m |a_i||P_i(t)|}. \] Note that if $|\CC_\eta(t)| \geq \frac{\delta^A}{A}$ for all $t \in [\frac{N}{2},N]$ then we have \[ \frac{\delta^A}{A} \max_{1 \leq i \leq m} |a_i| |P_i(t)| \leq |\eta(t)| \leqsim \max_{1 \leq i \leq m} |a_i||P_i(t)| \] for all $t \in [\frac{N}{2},N]$ so again $E_{\eta,A}$ is empty and we are done. Therefore, we can assume that there exists $t_0 \in [\frac{N}{2},N]$ such that $|\CC_\eta(t_0)| \leq \frac{\delta^A}{A}$. Since $\eta$ is a generalised polynomial, it is either monotonic on $[\frac{N}{2},N]$ or has $O(1)$ turning points. Thus the set of $t$ for which $|\CC_\eta(t)| \leq \frac{\delta^A}{A}$ is a union of $O(1)$ many intervals. Suppose $|\CC_\eta(t)|,|\CC_\eta(t_0)| \leq \frac{\delta^A}{A}$ for some $t$ lying in the same interval as $t_0$. Note that if $P_i$ is positive on $[\frac{N}{2},N]$ then since $c_i>0$ it follows that $P_i$ must be strictly increasing on $[\frac{N}{2},N]$, so $P_i'$ is also positive on $[\frac{N}{2},N]$. Similarly, if $P_i$ is negative then $P_i'$ is negative too. In other words, we have $\frac{d}{dt} |P_i(t)| = |P_i'(t)|$. Then by the Mean Value theorem we have \[ \frac{\delta^A}{A} \geq |\CC_\eta(t)-\CC_\eta(t_0)| = \left| \frac{\sum_{i=1}^m |a_i||P_i(t)| \sum_{j=1}^m a_j |P_j'(t)| - \sum_{i=1}^m a_i |P_i(t)| \sum_{j=1}^m |a_j| |P_j'(t)|}{(\sum_{i=1}^m |a_i| |P_i(\xi)|)^2} \right| |t-t_0| \] for some $\xi$ between $t$ and $t_0$, or \[ \frac{\delta^A}{A} \geq \left| \frac{\sum_{i \neq j} ( |a_i||P_i(t)|  a_j |P_j'(t)| -  a_i |P_i(t)|  |a_j| |P_j'(t)|) }{(\sum_{i=1}^m |a_i| |P_i(\xi)|)^2} \right| |t-t_0|. \] Since \[ |x|y-|y|x = \begin{cases} -2|x||y| & x>0,y<0 \\ 2|x||y| & x<0,y>0 \\ 0 & \text{otherwise} \end{cases} \] we see in fact this numerator is equal to \[ 2 \sum_{\substack{i \neq j \\ \omega_i \neq \omega_j}} \omega_j |a_i||P_i(t)||a_j||P_j'(t)|. \] where $\omega_j$ denotes the sign of $a_j$. Note the condition $|\CC(t_0)| \leq \frac{\delta^A}{A}$ implies that there exist $i,j \in \{1, \dotsc, m\}$ with $\omega_i \neq \omega_j$. In order to apply induction, we rewrite the numerator as \[ -\sum_{\omega_i=1} \sum_{\omega_j=-1} |a_i||a_j||P_i(t)||P_j'(t)| + \sum_{\omega_i=-1} \sum_{\omega_j=1} |a_i||a_j||P_i(t)||P_j'(t)| , \] which we can simplify to \begin{multline*} \sum_{\omega_i=1} \sum_{\omega_j=-1} |a_i||a_j| (|P_j(t)||P_i'(t)|-|P_i(t)||P_j'(t)|) \\ = \sum_{\omega_i=1} |a_i||P_i(t)| \sum_{\omega_j=-1} |a_j||P_j(t)| \left( \frac{|P_i'(t)|}{|P_i(t)|} - \frac{|P_j'(t)|}{|P_j(t)|} \right) . \end{multline*} Without loss of generality we will assume that $\max_{1 \leq k \leq m} |a_k| |P_k(\xi)| \neq |a_m||P_m(\xi)|$. The argument is analogous if this is not true with the role of $|a_1||P_1(\xi)|$, say, replacing that of $|a_m||P_m(\xi)|$. Then we decompose the numerator as \begin{multline*} \sum_{\substack{\omega_i=1 \\ i \neq m}} |a_i||P_i(\xi)| \sum_{\omega_j=-1} |a_j| |P_j(\xi)| \left( \frac{|P_i'(\xi)|}{|P_i(\xi)|} - \frac{|P_j'(\xi)|}{|P_j(\xi)|} \right) \\ + |a_m||P_m(\xi)| \sum_{\omega_j=-1} |a_j||P_j(\xi)| \left( \frac{|P_m'(\xi)|}{|P_m(\xi)|} - \frac{|P_j'(\xi)|}{|P_j(\xi)|} \right). \end{multline*} The condition $|\CC_\eta(\xi)| \leq \frac{\delta^A}{A}$ means that \[ |a_m||P_m(\xi)| =  \sum_{\omega_j=-1} |a_j||P_j(\xi)| - \sum_{\substack{\omega_i=1 \\ i \neq m}} |a_i||P_i(\xi)| + O \left( \frac{\delta^A}{A} \sum_{k=1}^m |a_k| |P_k(\xi)| \right). \] Thus we can rewrite our numerator as \begin{multline*} \sum_{\substack{\omega_i=1 \\ i \neq m}} |a_i||P_i(\xi)| \sum_{\omega_j=-1} |a_j||P_j(\xi)| \left( \frac{|P_i'(\xi)|}{|P_i(\xi)|} - \frac{|P_m'(\xi)|}{|P_m(\xi)|} \right) \\ + \sum_{\omega_j=-1} |a_j||P_j(\xi)| \sum_{\omega_{j'}=-1} |a_{j'}||P_{j'}(\xi)| \left( \frac{|P_m'(\xi)|}{|P_m(\xi)|} - \frac{|P_{j'}'(\xi)|}{|P_{j'}(\xi)|} \right)  \\ + O \left( \frac{\delta^A}{A} \sum_{k=1}^m |a_k| |P_k(\xi)| \sum_{\omega_j=-1} |a_j||P_j(\xi)| \left( \frac{|P_m'(\xi)|}{|P_m(\xi)|} - \frac{|P_j'(\xi)|}{|P_j(\xi)|} \right) \right). \end{multline*} Since $\frac{|P_i'(\xi)|}{|P_i(\xi)|} - \frac{|P_m'(\xi)|}{|P_m(\xi)|}$ is constant in $j$ we can change the order of summation in the first sum. Thus our numerator factorises as \begin{multline*} \left( \sum_{\omega_j=-1} |a_j||P_j(\xi)| \right) \left( \sum_{\substack{\omega_i=1 \\ i \neq m}} |a_i||P_i(\xi)| \left( \frac{|P_i'(\xi)|}{|P_i(\xi)|} - \frac{|P_m'(\xi)|}{|P_m(\xi)|} \right) \right. \\ + \left. \sum_{\omega_{j'}=-1} |a_{j'}||P_{j'}(\xi)| \left( \frac{|P_m'(\xi)|}{|P_m(\xi)|} - \frac{|P_{j'}'(\xi)|}{|P_{j'}(\xi)|} \right) \right) \\ + O \left(\frac{\delta^A}{A} \sum_{k=1}^m |a_k| |P_k(\xi)| \sum_{\omega_j=-1} |a_j||P_j(\xi)| \left( \frac{|P_m'(\xi)|}{|P_m(\xi)|} - \frac{|P_j'(\xi)|}{|P_j(\xi)|} \right) \right). \end{multline*} Let \[ \chi(\xi) = \xi \left( \sum_{\substack{\omega_i=1 \\ i \neq m}} |a_i||P_i(\xi)| \left( \frac{|P_i'(\xi)|}{|P_i(\xi)|} - \frac{|P_m'(\xi)|}{|P_m(\xi)|} \right) + \sum_{\omega_{j'}=-1} |a_{j'}||P_{j'}(\xi)| \left( \frac{|P_m'(\xi)|}{|P_m(\xi)|} - \frac{|P_{j'}'(\xi)|}{|P_{j'}(\xi)|} \right) \right). \] Then $\chi$ is a generalised polynomial of length strictly less than $m$. Indeed, we have \[ t\frac{P_i'(t)}{P_i(t)} \sim_\H \tau(P_i)>0 \] for all $i$, so for each $i \neq m$ we have \[ t|a_i| |P_i(t)| \left( \frac{|P_i'(t)|}{|P_i(t)|} - \frac{|P_m'(t)|}{|P_m(t)|} \right) \sim_\H (\tau(P_i)-\tau(P_m))|a_i||P_i(t)|. \]  With $B>0$ to be chosen later, by induction we have $|E_{\chi,B}| \leqsim \frac{\delta^{O(B)}}{B}N$. We decompose \[ E_{\eta,A} = (E_{\eta,A} \cap E_{\chi,B}) \cup (E_{\eta,A} \cap E_{\chi,B}^c). \] Since $|E_{\chi,B}| \leqsim \frac{\delta^{O(B)}}{B}N$ it suffices to estimate the measure of $E_{\eta,A} \cap E_{\chi,B}^c$. We can write this set as a union of $O(1)$ intervals. Suppose $t,t_0$ belong to the same interval. Then we have \[ \frac{\delta^A}{A} N \geqsim \left( \frac{|g(\xi)| |\sum_{ \omega_j=-1} |a_j| |P_j(\xi)|}{\xi (\max_{1 \leq k \leq m} |a_k| |P_k(\xi)|)^2} + O \left(\frac{\delta^A}{A} \frac{\xi \sum_{\omega_j=-1} |a_j||P_j(\xi)| \left( \frac{|P_m'(\xi)|}{|P_m(\xi)|} - \frac{|P_j'(\xi)|}{|P_j(\xi)|} \right)}{\xi \max_{1 \leq k \leq m} |a_k|\xi^{c_k}} \right) \right) |t-t_0|. \] Since $\xi$ lies in the same interval as $t$ and $t_0$, we have \[ \frac{\delta^B}{B} \max_{j \neq m} |a_j| |P_j(\xi)| \xi \left| \frac{|P_j'(\xi)|}{|P_j(\xi)|} - \frac{|P_m'(\xi)|}{|P_m(\xi)|} \right| \leq |g(\xi)|. \] In particular, we can bound \[ |g(\xi)| \geqsim \frac{\delta^B}{B} \xi \left| \frac{|P_k'(\xi)|}{|P_k(\xi)|} - \frac{|P_m'(\xi)|}{|P_m(\xi)|} \right| \max_{j \neq m} |a_j||P_j(\xi)| \] for some $k$. Since we allow constants to depend on $c_1, \dotsc, c_m$, we have \[ |g(\xi)| \geqsim \frac{\delta^B}{B} \max_{j \neq m} |a_j| |P_j(\xi)|. \] Recall $\max_{1 \leq k \leq m} |a_k||P_k(\xi)| \neq |a_m||P_m(\xi)|$, so $\max_{1 \leq k \leq m} |a_k||P_k(\xi)| = \max_{k \neq m} |a_k| |P_k(\xi)|$. We can bound \[ \xi \sum_{\omega_j=-1} |a_j||P_j(\xi)| \left( \frac{|P_m'(\xi)|}{|P_m(\xi)|} - \frac{|P_j'(\xi)|}{|P_j(\xi)|} \right) \leqsim \sum_{\omega_j=-1} |a_j| |P_j(\xi)|. \] Thus we obtain \[ \frac{\delta^A}{A} \geqsim \frac{\sum_{\omega_j=-1}|a_j| |P_j(\xi)|}{\xi \max_{1 \leq k \leq m} |a_k| |P_k(\xi)|} \left(\frac{\delta^B}{B}+O\left(\frac{\delta^A}{A} \right) \right)|t-t_0|. \] We have \[ \max_{1 \leq k \leq m} |a_k||P_k(\xi)| \in \left\{ \max_{\omega_i=-1} |a_i| |P_i(\xi)|, \max_{\omega_j=1} |a_j| |P_j(\xi)| \right\}. \] Moreover, the condition $|\CC_\eta(t)| \leq \frac{\delta^A}{A}$ implies \begin{multline*} \frac{\max_{\omega_j=-1} |a_j||P_j(\xi)|}{\max_{1 \leq k \leq m} |a_k||P_k(\xi)|} \geqsim \sum_{\omega_j=-1} \frac{|a_j||P_j(\xi)|}{\max_{1 \leq k \leq m} |a_k||P_k(\xi)|} \\ = \sum_{\omega_i=1} \frac{|a_i||P_i(\xi)|}{\max_{1 \leq k \leq m} |a_k||P_k(\xi)|} + O \left(\frac{\delta^A}{A} \right) \geq \frac{\max_{\omega_i=1} |a_i||P_i(\xi)|}{\max_{1 \leq k \leq m} |a_k||P_k(\xi)|} + O \left(\frac{\delta^A}{A} \right). \end{multline*} Since $\xi \in [\frac{N}{2},N]$, after bringing the $\xi$ in the denominator to the left hand side it follows that \[ \frac{\delta^A}{A} N \geqsim \left(1+O\left(\frac{\delta^A}{A} \right) \right) \left(\frac{\delta^B}{B}+O\left(\frac{\delta^A}{A} \right) \right)|t-t_0| \geqsim \left(\frac{\delta^B}{B}+O\left(\frac{\delta^A}{A} \right) \right)|t-t_0|. \] If we choose $B=cA$ with $c>0$ sufficiently small so that $\frac{\delta^B}{2B} \geqsim \frac{\delta^A}{A}$, we can conclude that \[ \frac{\delta^A}{A}N \geqsim \frac{\delta^B}{B} |t-t_0|, \] hence \[ |t-t_0| \leqsim \frac{\delta^{O(A)}}{A}N. \] It follows that $|E_{\eta,A}| \leqsim \frac{\delta^{O(A)}}{A}N$, as desired.
\end{proof}

With this lemma in hand, the major arc behaviour of our exponential sum follows readily.

\begin{proof}[Proof of Lemma \ref{lem:exponentialsum}]
By Theorem \ref{thm:removefloor}, as long as $N \geqsim \delta^{-O(1)}$, there exist $k_1, \dotsc, k_m \in \Z$ with $|k_i| \leqsim \delta^{-O(1)}$ such that \[ \delta^{O(1)} \leqsim \left| \frac{1}{N} \sum_{n=1}^N 1_{n>\frac{N}{2}} e((\xi_1+k_1) P_1(n)+\dotsb+(\xi_m+k_m) P_m(n)) \right|. \] Let $j=\max_{1\leq i \leq m} \lceil c_i \rceil+1$ and let $\eta$ denote the $j$\textsuperscript{th} derivative of $t \mapsto (\xi_1+k_1) P_1(t)+\dotsb+(\xi_m+k_m) P_m(t)$. Then note that \[ t^j P_i^{(j)}(t) \sim a_i P_i(t) \] by (\ref{eqn:derivativeorder}), where $a_i$ is a constant depending only on $c_1,\dotsc,c_m$. Thus $t^j\eta(t)$ is a generalised polynomial, so by Lemma \ref{lem:exceptionalset} for any $A>0$ there is a set $E_A$ with $|E_A| \leqsim \frac{\delta^{O(A)}}{A}$ such that \[ \frac{\delta^A}{A} \max_{1 \leq i \leq m} |\xi_i+k_i||t^j P_i^{(j)}(t)| \leq |t^j\eta(t)| \leqsim \max_{1 \leq i \leq m} |\xi_i+k_i|| t^j P_i^{(j)}(t)| \] holds for all $t \not \in E_A$. Partition $[\frac{N}{2},N]$ into intervals of length $\frac{\delta^{O(B)}}{B}N$, with $A$ sufficiently large compared to $B$. Then by the pigeonhole principle there exists a single interval $I$ with $\frac{\delta^B}{B}N \leqsim |I| \leq N$ an $i$ for which we have \[ \frac{\delta^A}{A}  |\xi_i+k_i|| P_i^{(j)}(t)| \leq |\eta(t)| \leqsim  |\xi_i+k_i|| P_i^{(j)}(t)| \] for all $t \in I$, and such that \[ \delta^{O(1)} \leqsim \E_{n \in I} e((\xi_1+k_1) P_1(n)+\dotsb+(\xi_m+k_m) P_m(n)). \] We therefore have \[ \frac{\delta^A}{A}|\xi_i+k_i||P_{i}^{(j)}(N)| \leqsim |\eta(t)| \leqsim |\xi_i+k_i||P_{i}^{(j)}(N)| \] for all $t \in I$, so the result follows by Lemma \ref{lem:ontheorder}.
\end{proof}

\subsection{The Single Scale Minor Arc Estimate}

Note that by (\ref{eqn:variationvsellr}), if for some $p$ and $f$ we can prove that $\|A_Nf\|_{\ell^p(\Z^m)} \leqsim a_N \|f\|_{\ell^p(\Z^m)}$ with $\sum_{N \in \D} a_N \leqsim 1$ then by the triangle inequality in $\ell^p(\Z^m)$ it will follow that $\|V^r(A_Nf)_{N \in \D}\|_{\ell^p(\Z^m)} \leqsim \|f\|_{\ell^p(\Z^m)}$. In general we have the following trivial estimate \begin{equation} \label{eqn:trivialestimate} \|A_Nf\|_{\ell^p(\Z^m)} \leq \|f\|_{\ell^p(\Z)}, \end{equation} which is easily seen by the triangle inequality in $\ell^p(\Z^m)$ and translation invariance, so we will require more information about $p$ or $f$.

Recall that $A_N$ is a convolution operator with kernel \[ K_N(x) = \int_{\T^m} m_{N;\Z}(\xi) e(x \cdot \xi) \,d\xi. \] By Lemma \ref{lem:exponentialsum} we know that $m_{N;\Z}$ is only large on a small box centred at the origin. This heuristic suggests that for $p=2$ and $f$ with a Fourier transform vanishing on a suitable box our operator $A_Nf$ should be small. Indeed, in this subsection we can easily prove the following theorem using Lemma \ref{lem:exponentialsum}.

\begin{theorem} \label{thm:minorarcestimate}
Let $N \geq 1$ and $l \in \N$. Suppose $f \in \ell^2(\Z^m)$ has a Fourier transform which vanishes on $\prod_{i=1}^m [[ 2^l |P_i(N)|^{-1} ]]$. Then \[ \|A_Nf\|_{\ell^2(\Z^m)} \leqsim (N^{-O(1)}+2^{-O(l)}) \|f\|_{\ell^2(\Z^m)}. \]
\end{theorem}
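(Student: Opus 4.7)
The plan is to interpret $A_N$ as a Fourier multiplier and use Plancherel's theorem to reduce the estimate to a pointwise bound on $m_{N;\Z}$ over the support of $\hat{f}$. Since $A_N f = K_N \ast f$ and $K_N$ has Fourier transform $m_{N;\Z}$, Plancherel gives
\[ \|A_N f\|_{\ell^2(\Z^m)} = \|m_{N;\Z} \hat{f}\|_{L^2(\T^m)} \leqsim \left( \sup_{\xi \in \supp(\hat{f})} |m_{N;\Z}(\xi)| \right) \|f\|_{\ell^2(\Z^m)}, \] so it suffices to show the supremum is bounded by $N^{-O(1)} + 2^{-O(l)}$.

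By hypothesis, $\hat{f}$ vanishes on the box $\prod_{i=1}^m [[2^l |P_i(N)|^{-1}]]$. Thus any $\xi \in \supp(\hat{f})$ has at least one coordinate $\xi_{i_0}$ with $\|\xi_{i_0}\|_\T > 2^l |P_{i_0}(N)|^{-1}$, and in particular $\|\xi_{i_0}\|_\T |P_{i_0}(N)| > 2^l$. I would then apply Lemma \ref{lem:exponentialsum} in its contrapositive form: setting $\delta = |m_{N;\Z}(\xi)|$, if $N \not\leqsim \delta^{-O(1)}$ then we must have $\|\xi_{i_0}\|_\T \leqsim \delta^{-O(1)} |P_{i_0}(N)|^{-1}$, which combined with the lower bound on $\|\xi_{i_0}\|_\T |P_{i_0}(N)|$ forces $2^l \leqsim \delta^{-O(1)}$, i.e.\ $\delta \leqsim 2^{-O(l)}$. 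In the remaining case we simply have $\delta \leqsim N^{-O(1)}$. Either way,
\[ |m_{N;\Z}(\xi)| \leqsim N^{-O(1)} + 2^{-O(l)} \] for every $\xi \in \supp(\hat{f})$. Equivalently, this is a direct application of (\ref{eqn:directform}), taking the minimum over $i$ to be at $i=i_0$.

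Plugging this pointwise bound back into the Plancherel estimate yields the desired inequality
\[ \|A_N f\|_{\ell^2(\Z^m)} \leqsim (N^{-O(1)} + 2^{-O(l)}) \|f\|_{\ell^2(\Z^m)}. \] There is no real obstacle here: all the hard work has already been done in establishing Lemma \ref{lem:exponentialsum}, and the single-scale minor arc estimate is essentially an immediate Plancherel consequence of that exponential sum bound together with the Fourier support hypothesis on $f$.
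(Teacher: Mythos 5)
Your proof is correct and uses the same ingredients as the paper's — Plancherel plus Lemma \ref{lem:exponentialsum}. The only cosmetic difference is in how the dichotomy of Lemma \ref{lem:exponentialsum} is unpacked: the paper splits into the two cases $N \geqsim 2^{O(l)}$ and $N \leqsim 2^{O(l)}$ up front (using a fixed $\delta = 2^{-l}$ in the first case and reducing the second to the first via a maximal $l_0$ with $2^{l_0}\sim N^{O(1)}$), whereas you take $\delta = |m_{N;\Z}(\xi)|$ and read off both terms of the bound directly from the disjunction, which is slightly more streamlined but logically the same.
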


\begin{proof}
First suppose that $N \geqsim 2^{O(l)}$. Let $\M_N = \prod_{i=1}^m [[2^l|P_i(N)|^{-1}]]$. Then by Plancherel's theorem, the support assumption on $\hat{f}$, H\"older's inequality, and Plancherel's theorem again we have \[ \|A_Nf\|_{\ell^2(\Z^m)} = \|m_{N;\Z} 1_{\M_N^c} \hat{f}\|_{L^2(\T^m)} \leq \|m_{N;\Z} 1_{\M_N^c}\|_{L^\infty(\T^m)} \|f\|_{\ell^2(\Z^m)}, \] so it now suffices to prove that \[ \|m_{N;\Z} 1_{\M_N^c}\|_{L^\infty(\T^m)} \leqsim 2^{-O(l)}. \] We now use Lemma \ref{lem:exponentialsum} in the contrapositive: if there exists $i$ such that $\|\xi_i\|_\T \geq \delta^{-1} |P_i(N)|^{-1}$ then $|m_{N;\Z}(\xi_1,\dotsc,\xi_m)| \leqsim \delta^{O(1)}$. Taking $\delta = 2^{-l}$ and noting that if $(\xi_1,\dotsc,\xi_m) \in \M_N^c$ then there exists $i$ such that $\|\xi_i\|_\T \geq \delta^{-1} |P_i(N)|^{-1}$ we obtain the desired bound. Let $l_0$ be the maximal $l$ such that $N \geqsim 2^{O(l)}$, and note that $2^{l_0} \sim N^{O(1)}$. For any $l$ such that $N \leqsim 2^{O(l)}$, if $\hat{f}$ vanishes on $\prod_{i=1}^m [[2^l|P_i(N)|^{-1}]]$ then $\hat{f}$ vanishes on $\prod_{i=1}^m [[2^{l_0} |P_i(N)|^{-1}]]$ so we obtain \[ \|A_Nf\|_{\ell^2(\Z^m)} \leqsim 2^{-O(l_0)} \|f\|_{\ell^2(\Z^m)} \sim N^{-O(1)} \|f\|_{\ell^2(\Z^m)} \] thus completing the proof.
\end{proof}

\section{Long Variation}

In this section we turn to the proof of Theorem \ref{thm:integervariation}. As discussed previously, we will use the circle method to decompose $f$ into a major arc piece and a minor arc piece.

We will need three constants $C_0,C_1,C_2 \in \N$ which will be determined by the end of the section. We allow them to depend on $P_1,\dotsc,P_m,m,p,r,\lambda$, and allow $C_2$ to depend on $C_0,C_1$.

\subsection{Reducing to Major Arcs}

Note that by (\ref{eqn:orderedpartition}), (\ref{eqn:variationvsellr}), and (\ref{eqn:trivialestimate}) we may assume that $N \geq C_2$ for all $N \in \D$. Thus it now suffices to prove that \begin{equation} \|V^r(A_Nf)_{N \in \D} \|_{\ell^p(\Z^m)} \leqsim_{C_2} \|f\|_{\ell^p(\Z^m)} \end{equation} for all $f \in \ell^p(\Z^m)$.

Now take $l_N = \lfloor C_0 \log \log N \rfloor$ and define \[ \widehat{f_{\M_N}}(\xi_1,\dotsc,\xi_m) = \hat{f}(\xi_1, \dotsc, \xi_m) \prod_{i=1}^m \Psi_{\leq 2^{l_N} |P_i(N)|^{-1}} (\xi_i) \] and $f_{\M_N^c} = f - f_{\M_N}$. Thus $\widehat{f_{\M_N}}$ is supported on $\prod_{i=1}^m [[ 2^{l_N+1} |P_i(N)|^{-1} ]]$ and $\widehat{f_{\M_N^c}}$ vanishes on $\prod_{i=1}^m [[ 2^{l_N} |P_i(N)|^{-1} ]]$. Thus by Theorem \ref{thm:minorarcestimate} we have \begin{equation} \|A_N f_{\M_N^c}\|_{\ell^2(\Z^m)} \leqsim_{C_2} (\log N)^{-O(C_0)} \|f\|_{\ell^2(\Z^m)}. \end{equation} Interpolation with (\ref{eqn:trivialestimate}) yields \begin{equation} \|A_Nf\|_{\ell^p(\Z^m)} \leqsim_{C_2} (\log N)^{-O(C_0)} \|f\|_{\ell^p(\Z^m)} \end{equation} for all $p>1$. Taking $C_0$ sufficiently large, but $O(1)$, we obtain \begin{equation} \|A_Nf\|_{\ell^p(\Z^m)} \leqsim_{C_2} (\log N)^{-10} \|f\|_{\ell^p(\Z^m)}. \end{equation} It follows that \[ \|V^r(A_N f_{\M_N^c})_{N \in \D}\|_{\ell^p(\Z^m)} \leqsim \sum_{N \in \D} \|A_N f_{\M_N^c}\|_{\ell^p(\Z^m)} \leqsim_{C_2} \|f\|_{\ell^p(\Z^m)} \sum_{N \in \D} (\log N)^{-10}, \] and since $\D$ is $\lambda$-lacunary we have $\sum_{N \in \D} (\log N)^{-10}\leqsim 1$. By the linearity of $A_N$ and the triangle inequality in $V^r$ and $\ell^p(\Z^m)$ it now suffices to prove that \begin{equation} \label{eqn:majorarcbound} \|V^r(A_N f_{\M_N})_{N \in \D} \|_{\ell^p(\Z^m)} \leqsim_{C_2} \|f\|_{\ell^p(\Z^m)}. \end{equation}

\subsection{The Major Arc Bound}

Note that \[ A_N f_{\M_N} = f*K_{\M_N} \] where \[ K_{\M_N}(x) = \int_{\T^m} e(x \cdot \xi) \E_{n \in [N]} 1_{n>\frac{N}{2}} e(\xi_1 \lfloor P_1(n) \rfloor + \dotsb + \xi_m \lfloor P_m(n) \rfloor) \prod_{i=1}^m \Psi_{\leq 2^{l_N} |P_i(N)|^{-1} } (\xi_i) \,d\xi. \]

Fix a cutoff function $\Psi \in C_c^\infty(\R,[0,1])$ which is non-negative, even, supported on $[-1,1]$, and equal to 1 on $[-\frac{1}{2},\frac{1}{2}]$. We allow all implied constants to depend on $\Psi$. For a real number $x$, we define \[ \Psi_{\leq x} = \Psi\left(\frac{\cdot}{2^{\lceil \log x \rceil}}\right). \] Then $\Psi_{\leq x}$ is supported on $\{ |\xi| \leq 2^{\lceil \log x \rceil} \}$ with $2^{\lceil \log x \rceil} \sim x$. Define \[ \Phi_{N,l}(\xi_1,\dotsc,\xi_m) = \begin{cases} \prod_{i=1}^m \Psi_{\leq 2^l|P_i(N)|^{-1}}(\xi_i) - \prod_{i=1}^m \Psi_{\leq 2^{l-1}|P_i(N)|^{-1}}(\xi_i), & l>-C_1 \\ \prod_{i=1}^m \Psi_{\leq 2^{-C_1} |P_i(N)|^{-1}}(\xi_i), & l=-C_1. \end{cases} \] Then we can decompose \[ \prod_{i=1}^m \Psi_{\leq N^{-c_i} 2^{l_N}} (\xi_i) = \sum_{-C_1 \leq l \leq l_N} \Phi_{N,l}(\xi_1,\dotsc,\xi_m). \] We let \[ f_{N,l}(x) = \int_{\T^m} \hat{f}(\xi) e(x \cdot \xi) \Phi_{N,l}(\xi) \,d\xi \] and define $\D_l = \{N \in \D \mid l \leq l_N \}$. By the triangle inequality in $V^r$ and $\ell^p(\Z)$ we can bound \[ \| V^r(\Tilde{A}_Nf_{\M_N})_{N \in \D} \|_{\ell^p(\Z^m)} \leqsim_{C_2} \sum_{l \geq -C_1} \|V^r(\Tilde{A}_N f_{N,l})_{N \in \D_l} \|_{\ell^p(\Z^m)}. \] Thus it suffices to prove the following.

\begin{theorem}
Let $l \geq -C_1$. Then \[ \|V^r(\Tilde{A}_N f_{N,l})_{N \in \D_l} \|_{\ell^p(\Z^m)} \leqsim_{C_2} \langle l \rangle^{O(1)} 2^{-O(l) 1_{p=2}} \|f\|_{\ell^p(\Z^m)}. \]
\end{theorem}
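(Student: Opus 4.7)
The plan is to transfer the major-arc operator $A_N$ to a smooth continuous model so that classical variational machinery applies, and to exploit Theorem~\ref{thm:minorarcestimate} to obtain the $2^{-O(l)}$ gain at $p=2$. First I would introduce the continuous multiplier
\[ L_N(\xi) = \frac{1}{N} \int_{N/2}^N e(\xi_1 P_1(t) + \dotsb + \xi_m P_m(t))\, dt \]
and, using the regularity conditions (\ref{eqn:halforder})--(\ref{eqn:derivativeorder}) and a Fourier expansion of the fractional parts $\{P_i(n)\}$ analogous to that in the proof of Theorem~\ref{thm:removefloor}, show that on the support of $\Phi_{N,l}$ the difference $m_{N;\Z}(\xi) - L_N(\xi)$ is bounded by $N^{-\sigma} \langle l \rangle^{O(1)}$ for some $\sigma > 0$. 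Summing this error over the lacunary set $N \in \D_l$ contributes only a constant to the $V^r$ norm, so it suffices to bound $\|V^r(\widetilde L_N f_{N,l})_{N \in \D_l}\|_{\ell^p(\Z^m)}$ for the convolution operator $\widetilde L_N$ with multiplier $L_N$.

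For this continuous operator, a dyadic rescaling realises $\widetilde L_N$ as an averaging-type multiplier at frequency scale $2^l$, so vector-valued Calder\'on--Zygmund theory together with L\'epingle's inequality for $r > 2$ should yield
\[ \|V^r(\widetilde L_N f_{N,l})_{N \in \D_l}\|_{\ell^p(\Z^m)} \leqsim \langle l \rangle^{O(1)} \|f\|_{\ell^p(\Z^m)} \]
for all $p > 1$; the factor $\langle l \rangle^{O(1)}$ is the standard cost of a L\'epingle-type estimate at a single frequency scale shifted by $l$.

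To obtain the improved factor $2^{-O(l)}$ when $p = 2$, I would use Plancherel together with Theorem~\ref{thm:minorarcestimate}: since $\widehat{f_{N,l}}$ vanishes on $\prod_{i=1}^m [[2^{l-2}|P_i(N)|^{-1}]]$ whenever $l > -C_1$, the single-scale minor arc estimate gives
\[ \|A_N f_{N,l}\|_{\ell^2(\Z^m)} \leqsim (N^{-O(1)} + 2^{-O(l)}) \|f_{N,l}\|_{\ell^2(\Z^m)}. \]
Since $\D_l$ is $\lambda$-lacunary and $|P_i(N)|$ is increasing in $N$, the annular supports of $\widehat{f_{N,l}}$ have bounded overlap as $N$ varies over $\D_l$, so a vector-valued Littlewood--Paley bound upgrades this pointwise-in-$N$ operator estimate to a $V^r$-bound on $\ell^2(\Z^m)$ with the desired decay. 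Interpolating the $p = 2$ bound (with $2^{-O(l)}$ decay) against the $p \neq 2$ bound from the previous paragraph produces the stated factor $\langle l \rangle^{O(1)} 2^{-O(l) 1_{p=2}}$.

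The principal obstacle is Step 1: obtaining the comparison estimate $|m_{N;\Z}(\xi) - L_N(\xi)| \leqsim N^{-\sigma} \langle l \rangle^{O(1)}$ with only polynomial-in-$l$ loss, rather than the exponential $2^{O(l)}$ loss that would come out of directly applying the arguments of Lemma~\ref{lem:exponentialsum} and Theorem~\ref{thm:removefloor}. I expect to handle this by summation by parts using the derivative estimate $tP_i'(t) \sim P_i(t)$ to obtain a trapezoidal-rule approximation of the exponential sum by its integral analog, controlling the residual error uniformly over the relevant range of frequencies.
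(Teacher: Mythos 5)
Your high-level strategy---replace $m_{N;\Z}$ by its continuous analogue, treat the error as a summable perturbation, and handle the continuous operator by smooth-multiplier variational machinery with the $p=2$ gain coming from Theorem~\ref{thm:minorarcestimate}---matches the structure of the paper's proof. However you have misidentified where the difficulty lies, and this misidentification would have sent you down a dead end.

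You flag as the ``principal obstacle'' the need to prove $|m_{N;\Z}(\xi)-L_N(\xi)|\leqsim N^{-\sigma}\langle l\rangle^{O(1)}$ on the support of $\Phi_{N,l}$, with only polynomial loss in $l$. In fact this estimate is both hard to obtain and unnecessary. The paper's Proposition~\ref{prop:replacemultiplier} proves only the weaker bound $\|A_Nf_{N,l}-f*K^l_{m_{N;\R}}\|_{\ell^p}\leqsim 2^{O(l)}N^{-O(1)}\|f\|_{\ell^p}$, with \emph{exponential} loss in $l$ coming from $|\xi_i|\leqsim 2^l|P_i(N)|^{-1}$ on $\supp\Phi_{N,l}$. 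The reason this suffices is that the indexing set $\D_l=\{N\in\D: l\leq l_N\}$ with $l_N=\lfloor C_0\log\log N\rfloor$ already forces $N\geq 2^{2^{l/C_0}}$, so $N^{-O(1)}$ is doubly-exponentially small in $l$ and absorbs any $2^{O(l)}$ factor while still leaving a $2^{-O(l)\one_{p=2}}$ gain. In other words the decay in $l$ you are trying to extract from the multiplier comparison is instead manufactured from the restriction of the index set $\D_l$. A summation-by-parts or trapezoidal argument to sharpen the multiplier comparison is not needed and likely would not succeed uniformly over $\supp\Phi_{N,l}$ anyway, since the Taylor-expansion error genuinely scales like $2^l N^{-1+\varepsilon}$.

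A second, smaller issue: you invoke L\'epingle's inequality as if it applies across all $l$. In the paper's argument L\'epingle-type input (via \cite{jones2008strong}) is needed only for the central piece $l=-C_1$, where $\Phi_{N,l}$ is a genuine ball cutoff and the martingale structure is what delivers the $V^r$ bound. For $l>-C_1$ the cutoffs $\Phi_{N,l}$ are annular and one does not need L\'epingle: one bounds $V^r$ by a square function via $V^r\leqsim\ell^r\leq\ell^2$, uses Khintchine and Plancherel at $p=2$ (where the lacunary annular supports give bounded overlap), and runs a Calder\'on--Zygmund weak-$(1,1)$ argument on the randomised sum using the mean-zero property of the bad part. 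Lumping the two cases together as ``averaging-type multiplier at frequency scale $2^l$ plus L\'epingle'' papers over the place where the mean-zero/cancellation structure actually enters and where the $l=-C_1$ piece requires separate treatment (subtracting $\Phi_{N,l}$, which vanishes at the origin, before applying the CZ argument, and then appealing to the pure L\'epingle estimate for $f*\Phi_{N,l}$).
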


There are some cases we need to distinguish between here. Firstly we have cases depending on whether or not $p=2$ and secondly depending on whether or not $l=-C_1$. We can easily handle the case $p=2$ and $l>-C_1$ using Theorem \ref{thm:minorarcestimate}.

\begin{proposition}
Suppose $l>-C_1$. Then \[ \|V^r(A_Nf_{N,l})_{N \in \D} \|_{\ell^2(\Z^m)} \leqsim_{C_2} 2^{-O(l)} \|f\|_{\ell^2(\Z^m)}. \]
\end{proposition}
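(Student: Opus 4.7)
The plan is to exploit the Plancherel-based orthogonality available when $p=2$, together with the single-scale minor arc decay furnished by Theorem \ref{thm:minorarcestimate}. Since $r>2$, one has the pointwise chain $V^r(\cdot) \leq V^2(\cdot)$, and the elementary estimate
\[
V^2(a_N)_{N \in \D_l}^2 \leqsim \sup_{N} |a_N|^2 + \sum_{N \in \D_l} |a_N|^2 \leqsim \sum_{N \in \D_l} |a_N|^2
\]
(each $N$ appearing in at most two consecutive differences) applied to $a_N = A_N f_{N,l}(x)$ and integrated in $x$ reduces the proposition to the square-function bound $\sum_{N \in \D_l} \|A_N f_{N,l}\|_{\ell^2(\Z^m)}^2 \leqsim 2^{-O(l)} \|f\|_{\ell^2(\Z^m)}^2$. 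For each summand I would apply Theorem \ref{thm:minorarcestimate}: by construction the two product cutoffs defining $\Phi_{N,l}$ coincide on a box of the form $\prod_{i=1}^m [[c\cdot 2^{l-1} |P_i(N)|^{-1}]]$ for some absolute $c>0$, so $\widehat{f_{N,l}} = \Phi_{N,l}\hat f$ vanishes on this box, which gives $\|A_N f_{N,l}\|_{\ell^2(\Z^m)} \leqsim (N^{-O(1)} + 2^{-O(l)}) \|f_{N,l}\|_{\ell^2(\Z^m)}$. The constraint $l \leq l_N = \lfloor C_0 \log\log N \rfloor$ implicit in $N \in \D_l$ forces $N \geq 2^{2^{l/C_0}}$, so with $C_2$ chosen sufficiently large the $N^{-O(1)}$ term is dominated by $2^{-O(l)}$.

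It then remains to establish the almost-orthogonality bound $\sum_{N \in \D_l} \|f_{N,l}\|_{\ell^2(\Z^m)}^2 \leqsim \|f\|_{\ell^2(\Z^m)}^2$. By Plancherel this is equivalent to the uniform pointwise estimate $\sum_{N \in \D_l} |\Phi_{N,l}(\xi)|^2 \leqsim 1$ for all $\xi \in \T^m$. In order for $\Phi_{N,l}(\xi)$ to be non-zero some coordinate $\xi_i$ must lie in an annular shell of the form $|\xi_i| \sim 2^l |P_i(N)|^{-1}$, equivalently $|P_i(N)| \sim 2^l |\xi_i|^{-1}$; since (\ref{eqn:Hardybound}) gives $|P_i(N)| \sim N^{c_i \pm \varepsilon}$ and $\D$ is $\lambda$-lacunary, for each fixed $i$ and $\xi_i \neq 0$ only $O(1)$ values of $N$ can satisfy this. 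Union-bounding over $i \in \{1,\dotsc,m\}$ and using $|\Phi_{N,l}| \leq 2$ yields the required pointwise estimate, and combining the three ingredients above gives the desired $2^{-O(l)}$ decay.

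The main technical point is the final almost-orthogonality step: while the supports of the multipliers $\Phi_{N,l}$ for different $N$ do in general overlap, the interaction between the $\lambda$-lacunarity of $\D$ and the super-polynomial separation of $|P_i(N)|$ encoded in (\ref{eqn:Hardybound}) forces this overlap to be $O(1)$ uniformly. Once this localisation is in place, the reduction via $V^r \leq V^2$ and the single-scale minor arc estimate combine directly to produce the proposition, so the remainder of the argument is essentially mechanical.
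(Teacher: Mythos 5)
Your proposal is correct and follows essentially the same route as the paper: reduce the $V^r$ norm to a square function via (\ref{eqn:variationvsellr}) and Fubini, apply Theorem \ref{thm:minorarcestimate} to each $A_N f_{N,l}$ using the vanishing of $\widehat{f_{N,l}}$ near the origin, and close with the almost-orthogonality bound $\sup_\xi \sum_N |\Phi_{N,l}(\xi)|^2 \leqsim 1$ coming from the supports of the $\Phi_{N,l}$ and the lacunarity of the index set. You are in fact slightly more explicit than the paper on two points it leaves implicit: that the $N^{-O(1)}$ term from Theorem \ref{thm:minorarcestimate} is absorbed by $2^{-O(l)}$ because $N \in \D_l$ forces $N \geq 2^{2^{l/C_0}}$, and the precise overlap counting argument (each $\xi$ lies in the annular support of $\Phi_{N,l}$ for only $O(1)$ values of $N$) behind the final $L^\infty$ estimate.
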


\begin{proof}
By (\ref{eqn:variationvsellr}) and Fubini's theorem we may bound \[ \|V^r(A_N f_{N,l})_{N \in \D} \|_{\ell^2(\Z^m)} \leqsim \left( \sum_{N \in \D} \|A_N f_{N,l}\|_{\ell^2(\Z^m)}^2 \right)^\frac{1}{2}. \] Since $l>-C_1$, observe that $\hat{f}_{N,l}$ vanishes on $\prod_{i=1}^m [[ O(2^l |P_i(N)|^{-1}) ]]$. Thus by Theorem \ref{thm:minorarcestimate} we have \[ \|A_N f_{N,l} \|_{\ell^2(\Z^m)} \leqsim 2^{-O(l)} \|f_{N,l}\|_{\ell^2(\Z^m)}. \] Therefore, \[ \|V^r(A_N f_{N,l})_{N \in \D} \|_{\ell^2(\Z^m)} \leqsim 2^{-O(l)} \left( \sum_{N \in \D} \|f_{N,l}\|_{\ell^2(\Z^m)}^2 \right)^\frac{1}{2}. \] By Plancherel's theorem, Fubini's theorem, and Plancherel's theorem again, it suffices to show that \[ \left \| \sum_{N \in \D} \Phi_{N,l}(\xi) \right\|_{L^\infty(\T^m)} \leqsim 1. \] But this follows from the support of $\Phi_{N,l}$ and the lacunary nature of $\D$.
\end{proof}

To handle the rest of the cases we would like to replace the multiplier $m_{N;\Z}$ with its continuous counterpart \[ m_{N;\R}(\xi_1, \dotsc, \xi_m) = \frac{1}{N} \int_\frac{N}{2}^N e(\xi_1 P_1(t) + \dotsb + \xi_m P_m(t)) \,dt. \] We define \[ K_{m_{N;\R}}^l(y) = \int_{\T^m} m_{N;\R}(\xi) \Phi_{N,l}(\xi) e(y \cdot \xi) \,d\xi \] for all $y \in \Z^m$. Replacing these multipliers will be facilitated by the following proposition.

\begin{proposition} \label{prop:replacemultiplier}
For any $l \geq -C_1$ we have \[ \| A_Nf_{N,l} - f*K_{m_{N;\R}}^l\|_{\ell^p(\Z^m)} \leqsim_{C_2} 2^{O(l)} N^{-O(1)} \|f\|_{\ell^p(\Z^m)}. \]
\end{proposition}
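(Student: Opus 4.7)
The plan is to reduce to a kernel estimate. Write $A_N f_{N,l} - f * K_{m_{N;\R}}^l = f * D_N^l$ where $D_N^l := K_N^l - K_{m_{N;\R}}^l$ has Fourier transform $(m_{N;\Z}(\xi) - m_{N;\R}(\xi)) \Phi_{N,l}(\xi)$. By Young's convolution inequality on $\Z^m$, the claim reduces (uniformly in $p$) to establishing
\[ \|D_N^l\|_{\ell^1(\Z^m)} \leqsim 2^{O(l)} N^{-O(1)}. \]

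To prove this I would first compute both kernels explicitly in physical space. Setting $\check{\Phi_{N,l}}(z) := \int_{\R^m} \Phi_{N,l}(\xi) e(z \cdot \xi) \, d\xi$, which is Schwartz on $\R^m$ since $\Phi_{N,l}$ is smooth with small compact support inside the fundamental domain of $\T^m$, one gets
\[ K_N^l(y) = \frac{1}{N} \sum_{N/2 < n \leq N} \check{\Phi_{N,l}}\bigl(y + \lfloor P_1(n) \rfloor, \dotsc, \lfloor P_m(n) \rfloor\bigr), \]
and the analogous formula for $K_{m_{N;\R}}^l$ in which $\lfloor P_i(n) \rfloor$ is replaced by $P_i(t)$ and the average over $n$ by $\frac{1}{N}\int_{N/2}^N \cdot \, dt$. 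The difference $D_N^l$ then splits naturally as $E_1 + E_2$, where $E_1$ accounts for removing the floors inside $\check{\Phi_{N,l}}$ and $E_2$ for replacing the Riemann sum by the integral.

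Both pieces are controlled by increments of $\check{\Phi_{N,l}}$: for $E_1$ the Mean Value Theorem, the bound $|\lfloor P_i(n) \rfloor - P_i(n)| \leq 1$, summation in $y$, and translation invariance give $\|E_1\|_{\ell^1(\Z^m)} \leqsim \sum_i \|\partial_i \check{\Phi_{N,l}}\|_{\ell^1(\Z^m)}$, while for $E_2$ the analogous argument combined with $|P_i(\lceil t \rceil) - P_i(t)| \leqsim |P_i'(N)|$ gives $\|E_2\|_{\ell^1(\Z^m)} \leqsim \sum_i |P_i'(N)| \, \|\partial_i \check{\Phi_{N,l}}\|_{\ell^1(\Z^m)}$.

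The main obstacle---and the only step requiring care---is the kernel-scaling bound $\|\partial_i \check{\Phi_{N,l}}\|_{\ell^1(\Z^m)} \leqsim 2^l |P_i(N)|^{-1}$. Since $\Phi_{N,l}$ is a tensor-product rescaling of a fixed smooth bump at scales $\sim 2^l |P_j(N)|^{-1}$ in each coordinate, Fourier scaling immediately yields $\|\partial_i \check{\Phi_{N,l}}\|_{L^1(\R^m)} \leqsim 2^l |P_i(N)|^{-1}$, and the Schwartz decay of $\check{\Phi_{N,l}}$ at dual scales $\sim 2^{-l}|P_j(N)| \gg 1$ (valid since $l \leq l_N \leqsim \log \log N$ and $N$ is large) allows comparison of $\ell^1(\Z^m)$ with $L^1(\R^m)$ up to constants. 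Plugging this in gives $\|E_1\|_{\ell^1(\Z^m)} \leqsim 2^l \sum_i |P_i(N)|^{-1} \leqsim 2^l N^{-c}$ with $c := \min_{1 \leq i \leq m} c_i > 0$, and using (\ref{eqn:derivativeorder}) gives $\|E_2\|_{\ell^1(\Z^m)} \leqsim 2^l N^{-1}$, so that $\|D_N^l\|_{\ell^1(\Z^m)} \leqsim 2^l N^{-O(1)}$ as required.
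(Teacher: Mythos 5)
Your proof is correct, and it takes a genuinely different route from the paper's. The paper proves a $p$-independent trivial bound $\|A_Nf_{N,l} - f*K^l_{m_{N;\R}}\|_{\ell^p} \lesssim 2^{O(l)}\|f\|_{\ell^p}$ (by bounding the $\ell^1(\Z^m)$ mass of the mollified kernel), then separately uses Plancherel and H\"older to get the $L^\infty$ multiplier estimate $\|\Phi_{N,l}(m_{N;\Z}-m_{N;\R})\|_{L^\infty} \lesssim 2^{O(l)}N^{-O(1)}$ at $p=2$, and finally interpolates to propagate the $N^{-O(1)}$ gain to all $p>1$. You instead bound $\|D_N^l\|_{\ell^1(\Z^m)}$ directly and invoke Young's inequality, which gives the result uniformly in $p$ and avoids interpolation entirely. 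Your decomposition $D_N^l = E_1 + E_2$ isolates the two sources of error cleanly (floor removal versus Riemann-sum-to-integral), and both reduce to the kernel-gradient bound $\|\partial_i\check\Phi_{N,l}\|_{\ell^1(\Z^m)} \lesssim 2^l|P_i(N)|^{-1}$ via the Mean Value Theorem and the regularity hypothesis $tP'(t)\sim P(t)$. The underlying analytic content is the same — $|\lfloor P_i(n)\rfloor - P_i(t)| \lesssim 1 + |P_i'(N)|$ for $t\in[n-1,n]$ — but your formulation in physical space is arguably tidier, since one estimate serves all $p$ at once.

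One small point to be explicit about: the $\ell^1(\Z^m)$-versus-$L^1(\R^m)$ comparison for $\partial_i\check\Phi_{N,l}$ needs the dual scales $2^{-l}|P_j(N)|$ to be $\gtrsim 1$ for \emph{every} coordinate $j$, which you correctly observe holds when $l \leq l_N$ and $N \geq C_2$. The proposition as stated allows arbitrary $l\geq -C_1$, but for $l$ so large that some $2^l|P_j(N)|^{-1}\gtrsim 1$ the cutoff $\Psi_{\leq 2^l|P_j(N)|^{-1}}$ is identically $1$ on $\T$ and $\Phi_{N,l}$ eventually vanishes, so the claim is vacuous there; in any case the only application in the paper is with $N\in\D_l$, where $l\leq l_N$. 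It would be worth noting this explicitly so the restriction to the nontrivial regime is visible.
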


\begin{proof}
Note that

\begin{multline*} f * K^l_{m_{N;\R}}(x) = \frac{1}{N} \int_{\frac{N}{2}}^N \sum_{y \in \Z^m} f(x-y) 
 \\ \left( \prod_{i=1}^m  \frac{1}{2^{-l} |P_i(N)|} \F_\R^{-1} \Psi \left(\frac{y_i-P_i(t)}{2^{-l} |P_i(N)|} \right) - 1_{l>-C_1} \prod_{i=1}^m  \frac{1}{2^{-(l-1)} |P_i(N)|} \F_\R^{-1} \Psi \left(\frac{y_i-P_i(t)}{2^{-(l-1)} |P_i(N)|} \right) \right) \,dt. \end{multline*} By Minkowski's inequality and Young's inequality we therefore have \begin{multline*} \|f * K_{m_{N;\R}} \|_{\ell^p(\Z^m)} \leq \|f\|_{\ell^p(\Z^m)} \frac{1}{N} \int_\frac{N}{2}^N \\  \sum_{y\in \Z^m} \left| \prod_{i=1}^m  \frac{1}{2^{-l} |P_i(N)|} \F_\R^{-1} \Psi \left(\frac{y_i-P_i(t)}{2^{-l} |P_i(N)|} \right) - 1_{l>-C_1} \prod_{i=1}^m  \frac{1}{2^{-(l-1)} |P_i(N)|} \F_\R^{-1} \Psi \left(\frac{y_i-P_i(t)}{2^{-(l-1)} |P_i(N)|} \right) \right|. \end{multline*} Since $\Psi$ is Schwartz we have \begin{multline*} \sum_{y\in \Z^m} \left| \prod_{i=1}^m  \frac{1}{2^{-l} |P_i(N)|} \F_\R^{-1} \Psi \left(\frac{y_i-P_i(t)}{2^{-l} |P_i(N)|} \right) - 1_{l>-C_1} \prod_{i=1}^m  \frac{1}{2^{-(l-1)} |P_i(N)|} \F_\R^{-1} \Psi \left(\frac{y_i-P_i(t)}{2^{-(l-1)} |P_i(N)|} \right) \right| \\ \leqsim 2^{O(l)}. \end{multline*} Therefore, by the triangle inequality in $\ell^p(\Z^m)$ and (\ref{eqn:trivialestimate}) it follows that \[ \|A_N f_{N,l} - f*K_{m_{N;\R}}^l \|_{\ell^p(\Z^m)} \leqsim 2^{O(l)} \|f\|_{\ell^p(\Z^m)}. \] By interpolation it now suffices to establish the $p=2$ case of the proposition. By Plancherel's theorem and H\"older's inequality it suffices to estimate \[ \|\Phi_{N,l}(m_{N;\Z}-m_{N;\R}) \|_{L^\infty(\T^m)} \leqsim 2^{O(l)} N^{-O(1)}. \] If $\xi$ is in the support of $\Phi_{N,l}$ then $|\xi_i| \leqsim 2^l |P_i(N)|^{-1}$ for each $i$. We can then express \begin{multline*} m_{N;\Z}(\xi) - m_{N;\R}(\xi) \\ = \frac{1}{N} \sum_{n=1}^N 1_{n>\frac{N}{2}} \int_{n-1}^n e(-\xi_1 \lfloor P_1(n) \rfloor - \dotsb-\xi_m \lfloor P_m(n) \rfloor) - e(-\xi_1  P_1(t)  - \dotsb-\xi_m  P_m(t)) \,dt. \end{multline*} Thus by the triangle inequality it suffices to show that \[ |e(-\xi_1 \lfloor P_1(n) \rfloor - \dotsb-\xi_m \lfloor P_m(n) \rfloor) - e(-\xi_1  P_1(t)  - \dotsb-\xi_m  P_m(t))| \leqsim 2^{O(l)} N^{-O(1)}. \] By Taylor expansion and the triangle inequality, we have \[ |e(-\xi_1 \lfloor P_1(n) \rfloor - \dotsb-\xi_m \lfloor P_m(n) \rfloor) - e(-\xi_1  P_1(t)  - \dotsb-\xi_m  P_m(t))| \leq \sum_{i=1}^m 2^l |P_i(N)|^{-1} |\lfloor P_i(n) \rfloor - P_i(t)|. \] Since $P_i$ is a Hardy field function, it follows that \[ |P_i(N)|^{-1} |\lfloor P_i(n) \rfloor - P_i(t)| \leqsim_\varepsilon N^{-1+\varepsilon} \] for all $\varepsilon>0$. Taking $\varepsilon=\frac{1}{2}$, say, the result follows.
\end{proof}

Since $l \leq l_N$ for all $N \in \D_l$ we see that $N \geq \max( 2^{2^{l/C_0}}, C_2)$. It follows that \[ 2^{-O(l)} \sum_{N \in \D_l} N^{-O(1)} \leqsim_{C_2} \langle l \rangle^{O(1)}  2^{-O(l) 1_{p=2}}. \] Therefore, by (\ref{eqn:variationvsellr}) and Proposition \ref{prop:replacemultiplier} it follows that \[ \| V^r(A_N f_{N,l} - f * K_{m_{N;\R}}^l)_{N \in \D_l} \|_{\ell^p(\Z^m)} \leqsim_{C_2} \langle l \rangle^{O(1)} 2^{-O(l) 1_{p=2}} \|f\|_{\ell^p(\Z^m)}. \] By the triangle inequality in $V^r$ and $\ell^p(\Z^m)$, it therefore suffices to prove that \[ \|V^r( f*K_{m_{N;\R}}^l)_{N \in \D_l} \|_{\ell^p(\Z^m)} \leqsim_{C_2} \langle l \rangle^{O(1)} 2^{-O(l)1_{p=2}} \|f\|_{\ell^p(\Z^m)}. \]

The benefit of replacing $m_{N;\Z}$ with $m_{N;\R}$ is we can change variables to write \[ m_{N;\R}(\xi_1,\dotsc, \xi_m) = \int_\frac{1}{2}^1 e(\xi_1 P_1(Nt) + \dotsb + \xi_m P_m(Nt)) \,dt. \] Therefore, we have \[ f*K_{m_{N;\R}}^l = \int_{\frac{1}{2}}^1 f * \F_{\R^m}^{-1} \Phi_{N,l}(\cdot-(P_1(Nt),\dotsc,P_m(Nt)) \,dt. \] We can apply Minkowski's inequality to bound \[ \|V^r(f*K_{m_{N;\R}}^l)_{N \in \D_l} \|_{\ell^p(\Z^m)} \leq \int_\frac{1}{2}^1 \|V^r(f*\F_{\R^m}^{-1} \Phi_{N,l}(\cdot-(P_1(Nt),\dotsc,P_m(Nt)))_{N \in \D_l} \|_{\ell^p(\Z^m)} \,dt. \] Thus it now suffices to prove the following proposition.

\begin{proposition}
Suppose that either $p \neq 2$ and $l>-C_1$ or that $l=-C_1$. Then for each $t \in [\frac{1}{2},1]$ we have \[ \|V^r(f*\F_{\R^m}^{-1} \Phi_{N,l}(\cdot-(P_1(Nt),\dotsc,P_m(Nt)))_{N \in \D_l} \|_{\ell^p(\Z^m)} \leqsim_{C_2} \langle l \rangle^{O(1)} \|f\|_{\ell^p(\Z^m)} . \]
\end{proposition}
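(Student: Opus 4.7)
The plan is to regard the kernel $\F_{\R^m}^{-1}\Phi_{N,l}(\cdot - v_N(t))$, where $v_N(t) = (P_1(Nt),\dotsc,P_m(Nt))$, as a Schwartz bump (or, when $l > -C_1$, a difference of two nested Schwartz bumps) of $L^1$-mass $O(1)$, spatially localized in the box $v_N(t) + \prod_i [[2^{-l}|P_i(N)|]]$. Consequently each operator $T_N f = f * \F_{\R^m}^{-1}\Phi_{N,l}(\cdot - v_N(t))$ is pointwise dominated by a shifted multi-parameter Hardy--Littlewood maximal function, giving a uniform $\ell^p$ bound for the $\sup$ part of the variation for every $p > 1$ via Corollary \ref{cor:maximal} (or directly via the strong maximal theorem). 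The substantive content of the proposition is therefore control of the jump part of $V^r$.

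For the jump part the approach is a two-step reduction. First, apply a Rademacher--Menshov style argument along the lacunary set $\D_l$ to bound $V^r$ in terms of $\ell^2$ square functions over dyadic partitions of $\D_l$; because $r > 2$ strictly and $\D_l$ is $\lambda$-lacunary, the usual $\log|\D_l|$ factor is absorbed into $\langle l \rangle^{O(1)}$ by exploiting that $N \geq 2^{2^{l/C_0}}$ on $\D_l$ and that only $O(\langle l \rangle)$ intermediate spatial scales $2^{-l}|P_i(N)|$ actually contribute. Second, for a single dyadic block one estimates $\|T_{N_+}f - T_{N_-}f\|_{\ell^p}$: for $p = 2$ this uses Plancherel together with the observation that the multiplier difference $e(-v_{N_+}\cdot\xi)\Phi_{N_+,l}(\xi) - e(-v_{N_-}\cdot\xi)\Phi_{N_-,l}(\xi)$ is controllable both pointwise (by smoothness in $\xi$) and in support measure (by the nested, essentially lacunary, structure of the boxes $\prod_i [[2^l|P_i(N)|^{-1}]]$).

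For $p \neq 2$ with $l > -C_1$, one passes from the $\ell^2$ square-function estimate to $\ell^p$ by vector-valued Calder\'on--Zygmund theory applied to the $V^r(\D_l)$-valued operator $f \mapsto (T_N f)_{N \in \D_l}$; the required H\"ormander kernel-regularity condition follows from the Schwartz decay of $\F_{\R^m}^{-1}\Phi_{N,l}$ and its mean-zero cancellation (since $\Phi_{N,l}$ is a difference of two nested bumps for $l > -C_1$). The $\langle l \rangle^{O(1)}$ loss in $p$ is then inherited from the $p = 2$ square-function estimate. For $l = -C_1$ the kernel no longer has cancellation but the required bound is only $O(1)$, which comes from the maximal bound together with the $p = 2$ square-function estimate (whose output is $O(1)$ when $\langle l \rangle = O(1)$) interpolated with the trivial $\ell^\infty$ bound.

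The main obstacle is the presence of the shift $v_N(t)$, which prevents direct appeal to standard translation-invariant Littlewood--Paley or martingale machinery. The key observation making the analysis feasible is that the shift and the spatial scale are coupled by the ratio $|v_N(t)|/(2^{-l}|P_i(N)|) \sim 2^l$: on the Fourier side, the oscillation $e(-v_N(t)\cdot\xi)$ restricted to the support $|\xi_i| \lesssim 2^l|P_i(N)|^{-1}$ of $\Phi_{N,l}$ has phase of size only $O(2^l)$, so the smoothness estimates in $\xi$ used for the $\ell^2$ square function produce losses of at most $\langle l \rangle^{O(1)}$; in physical space the Schwartz decay of the kernel comfortably absorbs the polynomial displacement of $v_N(t)$, allowing the vector-valued Calder\'on--Zygmund extrapolation to go through.
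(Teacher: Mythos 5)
Your treatment of the case $p\neq 2$, $l>-C_1$ is essentially the paper's argument, phrased in a slightly different language: the paper bounds $V^r$ by the $\ell^2(\D_l)$ square function via (\ref{eqn:variationvsellr}) (since $r>2$, no Rademacher--Menshov step is needed), then reduces to scalar Calder\'on--Zygmund operators via Khintchine, whereas you invoke vector-valued Calder\'on--Zygmund theory for the $\ell^2(\D_l)$-valued operator directly. These are interchangeable. Your observation that the ratio of the shift $v_N(t)$ to the spatial scale $2^{-l}|P_i(N)|$ is $\sim 2^l$, which is the source of the $\langle l\rangle^{O(1)}$ loss, matches the paper's bookkeeping.

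The $l=-C_1$ case is where your proposal has a genuine gap. The issue is exactly the loss of cancellation you identify, but the proposed fix --- ``the maximal bound together with the $p=2$ square-function estimate $\ldots$ interpolated with the trivial $\ell^\infty$ bound'' --- does not close it. There is no usable $\ell^\infty$ endpoint for the $\ell^2(\D_l)$- or $V^r$-valued operator: the operator $f\mapsto (T_Nf)_{N\in\D_l}$ takes a bounded $f$ to a sequence whose $\ell^2(\D_l)$-norm can be of size $|\D_l|^{1/2}$, so there is nothing uniform to interpolate against. The maximal function controls only $\sup_N|T_Nf|$, not the jump contribution to $V^r$, and one cannot recover the jump bound from the $p=2$ estimate alone without a mechanism to extrapolate in $p$. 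The paper's resolution is a further decomposition you are missing: write
\[
\F_{\R^m}^{-1}\Phi_{N,l}(\cdot-v_N(t)) \;=\; \bigl(\F_{\R^m}^{-1}\Phi_{N,l}(\cdot-v_N(t))-\F_{\R^m}^{-1}\Phi_{N,l}\bigr) \;+\; \F_{\R^m}^{-1}\Phi_{N,l}.
\]
The first term has Fourier transform vanishing at the origin (the shift restores cancellation), so the Calder\'on--Zygmund argument from the $l>-C_1$ case applies to it. The second term is an honest translation-invariant Littlewood--Paley family, and its $V^r$ estimate on $\ell^p(\Z^m)$ is the L\'epingle-type input (Theorem 1.1 of \cite{jones2008strong}). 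Without some such invocation of L\'epingle/Jones--Seeger--Wright, the $l=-C_1$ jump bound for $p\neq 2$ is not established.

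One smaller point: be careful citing Corollary~\ref{cor:maximal} here. This proposition is part of the proof of Theorem~\ref{thm:integervariation}, which is what eventually yields Corollary~\ref{cor:maximal}, so that appeal would be circular. Your parenthetical ``or directly via the strong maximal theorem'' is the version you must use; a shifted maximal function with shift $\sim 2^l$ times the scale loses only $\langle l\rangle^{O(1)}$, which is acceptable.
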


\begin{proof}
First suppose $p \neq 2$ and $l>-C_1$. Then using (\ref{eqn:variationvsellr}) we bound \begin{multline*} \| V^r(f*\F_{\R^m}^{-1} \Phi_{N,l}(\cdot-(P_1(Nt),\dotsc,P_m(Nt)))_{N \in \D_l} \|_{\ell^p(\Z^m)} \\ \leqsim \left\| \left( \sum_{N \in \D_l} |f*\F_{\R^m}^{-1} \Phi_{N,l}(\cdot-(P_1(Nt),\dotsc,P_m(Nt))|^2 \right)^\frac{1}{2} \right\|_{\ell^p(\Z^m)}. \end{multline*} By Khintchine's inequality it suffices to prove that \[ \left\| \sum_{N \in \D_l} \varepsilon_N f * \F_{\R^m}^{-1} \Phi_{N,l}(\cdot-(P_1(Nt),\dotsc,P_m(Nt)) \right\|_{\ell^p(\Z^m)} \leqsim \langle l \rangle^{O(1)} \|f\|_{\ell^p(\Z^m)} \] for all 1-bounded $\varepsilon_N \in \C$. By Plancherel's theorem we have \begin{multline*} \left\| \sum_{N \in \D_l} \varepsilon_N f * \F_{\R^m}^{-1} \Phi_{N,l}(\cdot-(P_1(Nt),\dotsc,P_m(Nt)) \right\|_{\ell^2(\Z^m)}^2 \\ = \int_{\T^m} |\hat{f}|^2 \left| \sum_{N \in \D} \varepsilon_N  \Phi_{N,l}(\xi_i-P_i(Nt)) \right|^2 \,d\xi. \end{multline*} Since $l>-C_1$ and $\D_l$ is lacunary, it follows that \[ \left\| \sum_{N \in \D_l} \varepsilon_N f * \F_{\R^m}^{-1} \Phi_{N,l}(\cdot-(P_1(Nt),\dotsc,P_m(Nt)) \right\|_{\ell^2(\Z^m)} \leqsim \|f\|_{\ell^2(\Z^m)}. \] By duality and the Marcinkiewicz interpolation theorem it suffices to establish the weak type bound \[ \left| \left\{ x \in \Z^m : \left| \sum_{N \in \D_l} \varepsilon_N (f * \F_{\R^m}^{-1} \Phi_{N,l}(\cdot-(P_1(Nt),\dotsc,P_m(Nt)))(x) \right| \geq \alpha \right\} \right| \leqsim_{C_2} \langle l \rangle^{O(1)} \frac{\|f\|_{\ell^1(\Z^m)}}{\alpha}. \] By performing a Calder\'on--Zygmund decomposition on $\Z^m$, the standard proof that Calder\'on--Zygmund operators are weak-(1,1) reduces to showing that \[ \sum_{x \in (100 Q_j)^c} \left| \sum_{N \in \D_l} \varepsilon_N (b_j * \F_{\R^m}^{-1} \Phi_{N,l}(\cdot-(P_1(Nt),\dotsc,P_m(Nt)))(x) \right|  \leqsim_{C_2} \langle l \rangle^{O(1)} \|f\|_{\ell^1(Q_j)} \] where $b_j = b1_{Q_j}$, $b$ is the ``bad part" of $f$, and $\{Q_j\}_{j=1}^\infty$ are a disjoint collection of dyadic blocks whose union is \[ \left\{ x \in \Z^m : \left| \sum_{N \in \D_l} \varepsilon_N (f * \F_{\R^m}^{-1} \Phi_{N,l}(\cdot-(P_1(Nt),\dotsc,P_m(Nt)))(x) \right| \geq \alpha \right\}. \] We can bound \begin{multline*} \sum_{x \in (100 Q_j)^c} \left| \sum_{N \in \D_l} \varepsilon_N (b_j * \F_{\R^m}^{-1} \Phi_{N,l}(\cdot-(P_1(Nt),\dotsc,P_m(Nt)))(x) \right| \\ \leq \sum_{x \in (100Q_j)^c} \sum_{N \in \D_l} \sum_{y \in Q_j} |\F_{\R^m}^{-1} \Phi_{N,l}(x-y-(P_1(Nt),\dotsc,P_m(Nt)) \\-\F_{\R^m}^{-1} \Phi_{N,l}(x-(P_1(Nt),\dotsc,P_m(Nt))| |b_j(y)| \end{multline*} using the fact that $b_j$ has mean zero. By H\"older's inequality, it suffices to show that \begin{multline*} \sum_{x \in (100Q_j)^c} \sum_{N \in \D_l} |\F_{\R^m}^{-1} \Phi_{N,l}(x-y-(P_1(Nt),\dotsc,P_m(Nt))-\F_{\R^m}^{-1} \Phi_{N,l}(x-(P_1(Nt),\dotsc,P_m(Nt))| \\ \leqsim_{C_2} \langle l \rangle^{O(1)} \end{multline*} for all $y \in Q_j$. This argument is standard and can be found in Chapter 1 of \cite{stein1993harmonic}.

Now suppose $l=-C_1$. Then the Fourier transform of \[ \F_{\R^m}^{-1} \Phi_{N,l}(\cdot-(P_1(Nt),\dotsc,P_m(Nt))-\Phi_{N,l} \] vanishes at the origin so the above argument can be used analogously to show that \[ \|V^r(f*(\F_{\R^m}^{-1} \Phi_{N,l}(\cdot-(P_1(Nt),\dotsc,P_m(Nt))-\Phi_{N,l}))_{N \in \D_l} \|_{\ell^p(\Z^m)} \leqsim_{C_2} \langle l \rangle^{O(1)} \|f\|_{\ell^p(\Z^m)}. \] Thus by the triangle inequality in $V^r$ and $\ell^p(\Z^m)$ we have reduced to showing that \[ \|V^r(f*\Phi_{N,l})_{N \in \D_l} \|_{\ell^p(\Z^m)} \leqsim_{C_2} \langle l \rangle^{O(1)} \rangle \|f\|_{\ell^p(\Z^m)}. \] This follows from Theorem 1.1 of \cite{jones2008strong}.
\end{proof}

This completes the proof of Theorem \ref{thm:integervariation}.

\section{Short Variation}

In this section we complete the proof of Theorem \ref{thm:integerfullvariation}. Fix $P_1,\dotsc,P_m$ such that $\{P_,\dotsc,P_m\} \in \P'$. As discussed in the introduction, we first bound \[ V^r(A_Nf)_{N \in \N} \leqsim V^r(A_Nf)_{N \in \D} + \left( \sum_{k=1}^\infty V^2(A_Nf)_{N \in \I_k}^2 \right)^\frac{1}{2} \] where $\D = \{2^n : n \in \N\}$ and $\I_k = \{2^k,\dotsc,2^{k+1}\}$. By the triangle inequality in $\ell^p(\Z^m)$ and Theorem (\ref{thm:integervariation}), using the fact that $\P' \subseteq \P$, we only need to prove \begin{equation} \label{eqn:shortvariationbounded} \left\| \left( \sum_{k=1}^\infty V^2(A_Nf)_{N \in \I_k}^2 \right)^\frac{1}{2} \right\|_{\ell^p(\Z^m)} \leqsim \|f\|_{\ell^p(\Z^m)}. \end{equation}

We fix a partition of unity $\{\eta_j\}_{j=1}^\infty$ so that \[ \sum_{j=1}^\infty \eta_j(\xi)=1 \] for all $\xi \neq 0$ and such that $\eta_j$ is supported on $ \prod_{i=1}^m [[ |P_i(2^j)|^{-1} ]]$ and vanishes on $\prod_{i=1}^m [[ |P_i(2^{j+1})|^{-1} ]]$ for all $i$ and $j$. Thus for all $f \in \ell^p(\Z^m)$ with $p>1$ we have \begin{equation} \label{eqn:partitionofunityequality} f = \sum_{j=1}^\infty f * \check{\eta}_j \end{equation} with convergence in $\ell^p(\Z^m)$. We define $\eta_j=0$ for $j \leq 0$ and let \[ Sf = \left( \sum_{j \in \Z} |f*\check{\eta}_j|^2 \right)^\frac{1}{2} \] denote the square function with respect to this partition of unity. Since the $\eta_j$ are smooth and have lacunary supports, we have \begin{equation} \label{eqn:squarefunction} \|Sf\|_{\ell^p(\Z^m)} \sim \|f\|_{\ell^p(\Z^m)} \end{equation} for all $p>1$. Then by (\ref{eqn:partitionofunityequality}), changing variables in $k$, and Minkowski's inequality in $V^2$, $\ell^2(\N)$, and $\ell^p(\Z^m)$, the left hand side of (\ref{eqn:shortvariationbounded}) is bounded above by \[ \sum_{j \in \Z} \left\| \left( \sum_{k=1}^\infty V^2(A_N(f*\check{\eta}_{j+k}))_{N \in \I_k}^2 \right)^\frac{1}{2} \right\|_{\ell^p(\Z^m)}. \] By interpolation, it therefore suffices to prove \begin{equation} \label{eqn:absjbound} \left\| \left( \sum_{k=1}^\infty V^2(A_N(f*\check{\eta}_{j+k}))_{N \in \I_k}^2 \right)^\frac{1}{2} \right\|_{\ell^p(\Z^m)} \leqsim 2^{-O(|j|)1_{p=2}} \|f\|_{\ell^p(\Z^m)} \end{equation} for each $j \in \Z$.

We begin with the $p=2$ case. Then \[ \left\| \left( \sum_{k=1}^\infty V^2(A_N(f*\check{\eta}_{j+k}))_{N \in \I_k}^2 \right)^\frac{1}{2} \right\|_{\ell^2(\Z^m)} = \left( \sum_{k=1}^\infty \|V^2(A_N(f*\check{\eta}_{j+k}))_{N \in \I_k} \|_{\ell^2(\Z^m)}^2 \right)^\frac{1}{2}. \] We claim that \begin{equation} \label{eqn:shortell2bound} \|V^2(A_N(f*\check{\eta}_{j+k}))_{N \in \I_k} \|_{\ell^2(\Z^m)} \leqsim 2^{-O(|j|)} \|f*\check{\eta}_{j+k}\|_{\ell^2(\Z^m)}. \end{equation} To prove (\ref{eqn:shortell2bound}) we will use the following argument which was used by Bourgain in \cite{Bourgain89} (Lemma 3.11), see also Lemma 3.20 of \cite{krausediscrete}.

\begin{lemma} \label{lem:general2shortbound}
Fix $k \in \N$, $f \in \ell^2(\Z^m)$, and smooth functions $\phi_1,\dotsc,\phi_{2^k}$ on $\T^m$. Suppose that \[ \sup_{\xi \in \mathrm{supp} \hat{f}} |\phi_n(\xi)| \leq A \] and \[ \sup_{\xi \in \mathrm{supp} \hat{f}} |\phi_{n+1}(\xi)-\phi_n(\xi)| \leq a \] for all $n$. Then $\|V^2(f*\check{\phi}_N)_{N \in [2^k]} \|_{\ell^2(\Z^m)} \leqsim \sqrt{2^kAa} \|f\|_{\ell^2(\Z^m)}$.
\end{lemma}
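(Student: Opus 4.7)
The plan is to combine a Rademacher--Menshov type dyadic decomposition of the $V^2$ norm with Plancherel's theorem, and then interpolate between the two available pointwise bounds on the multipliers to produce the geometric mean $\sqrt{Aa}$. Recall the classical Rademacher--Menshov inequality
\[
V^2(b_N)_{N \in [2^k]} \leqsim \sum_{j=0}^{k} \left(\sum_{l=0}^{2^{k-j}-1} |b_{(l+1)2^j} - b_{l2^j}|^2\right)^{1/2}
\]
valid for any sequence $(b_N)$ indexed by $[2^k]$ (extending by $b_0 = b_1$ if needed). Applying this pointwise with $b_N = f \ast \check{\phi}_N(x)$, taking $\ell^2(\Z^m)$ norms and invoking Minkowski's inequality reduces the lemma to proving, for each $0 \leq j \leq k$, the single-scale estimate
\[
\left\|\left(\sum_{l=0}^{2^{k-j}-1} \left|f \ast \bigl(\check{\phi}_{(l+1)2^j} - \check{\phi}_{l2^j}\bigr)\right|^2\right)^{1/2}\right\|_{\ell^2(\Z^m)} \leqsim \sqrt{2^{k-j}} \cdot \min(2^j a, A) \cdot \|f\|_{\ell^2(\Z^m)},
\]
and then summing these bounds in $j$.

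For the single-scale estimate, Plancherel and Fubini give
\[
\left\|\left(\sum_l \left|f\ast\bigl(\check{\phi}_{(l+1)2^j} - \check{\phi}_{l2^j}\bigr)\right|^2\right)^{1/2}\right\|_{\ell^2(\Z^m)}^2 = \sum_{l} \int_{\T^m} |\hat{f}(\xi)|^2 \bigl|\phi_{(l+1)2^j}(\xi) - \phi_{l2^j}(\xi)\bigr|^2 d\xi,
\]
so only the values of $\phi_n$ on $\supp \hat{f}$ matter. There two upper bounds on $|\phi_{(l+1)2^j} - \phi_{l2^j}|$ are available: telescoping with the hypothesis $|\phi_{n+1}-\phi_n|\leq a$ gives the bound $2^j a$, while the absolute bound $|\phi_n|\leq A$ gives $2A$. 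Taking the minimum, pulling $\|\hat{f}\|_{L^2}^2 = \|f\|_{\ell^2}^2$ out of the integral, and noting that the outer sum has $2^{k-j}$ terms yields the desired estimate.

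To finish, split the sum $\sum_{j=0}^k \sqrt{2^{k-j}} \min(2^j a, A)$ at the threshold $j^\ast$ where $2^{j^\ast} \sim A/a$, i.e.\ where the two bounds coincide. For $j \leq j^\ast$ we use the $2^j a$ bound and sum the geometric series $a \cdot 2^{k/2} \sum_{j\leq j^\ast} 2^{j/2} \leqsim a \cdot 2^{k/2} \cdot 2^{j^\ast/2} \sim \sqrt{2^k A a}$, and for $j > j^\ast$ we use the $A$ bound and sum $A \cdot 2^{k/2} \sum_{j>j^\ast} 2^{-j/2} \leqsim A \cdot 2^{k/2} \cdot 2^{-j^\ast/2} \sim \sqrt{2^k A a}$. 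Combining the two halves gives the claimed bound $\|V^2(f \ast \check{\phi}_N)_{N \in [2^k]}\|_{\ell^2(\Z^m)} \leqsim \sqrt{2^k A a} \, \|f\|_{\ell^2(\Z^m)}$. The only non-routine ingredient is the Rademacher--Menshov inequality itself, which is classical and completely independent of the Hardy field setting or of the specific structure of the averaging operator $A_N$; everything else is Plancherel, Minkowski, and the interpolation of two elementary pointwise bounds.
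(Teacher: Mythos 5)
Your proof is correct and reproduces the standard argument behind the two references the paper cites for this lemma (Bourgain's Lemma 3.11 and Lemma 3.20 of Krause's notes): a Rademacher--Menshov dyadic decomposition of $V^2$, Plancherel to pass to pointwise multiplier bounds, and geometric interpolation between the telescoped bound $2^j a$ and the crude bound $A$ at the crossover scale $2^{j^*} \sim A/a$, which produces the geometric mean $\sqrt{2^k A a}$. The paper itself gives no proof of this lemma, deferring entirely to those references, so there is nothing further to compare against.

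One small point worth flagging: with the paper's convention that $V^2$ includes the leading $\sup_N |a_N|$ term, both the stated lemma and the Rademacher--Menshov inequality you invoke fail in the degenerate case $a=0$, $A>0$ (constant sequences); the correct general bound is $\leqsim (A + \sqrt{2^k A a})\|f\|_{\ell^2(\Z^m)}$. Your argument cleanly controls the difference-seminorm part, which is what the cited sources actually bound and all the application needs --- there one has $A \leqsim 2^k a$, so the extra $A\|f\|_{\ell^2(\Z^m)}$ is dominated by $\sqrt{2^k A a}\|f\|_{\ell^2(\Z^m)}$ anyway.
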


To use Lemma \ref{lem:general2shortbound}, note that since variation operators are defined using differences, we are free to replace $A_N$ by $B_N = A_N-A_{2^k}$. We denote $\nu_N = m_{N;\Z}-m_{2^k;\Z}$. By the triangle inequality we have \begin{equation} \|\nu_{N+1}-\nu_N\|_{L^\infty(\T)} = \|m_{N+1;\Z}-m_{N;\Z}\| \leqsim N^{-1} \leqsim 2^{-k} \end{equation} for all $N \in \I_k$. We now bound $|\nu_N(\xi)|$. On the one hand, by the triangle inequality and the Mean Value theorem we have \[ |\nu_N(\xi)| \leq |m_{N;\Z}(\xi)-1| + |m_{2^k}(\xi)-1| \leqsim \sum_{i=1}^m |\xi_i||P_i(2^k)| \leqsim \max_{1 \leq i \leq m} |\xi_i||P_i(2^k)| \] for all $N \in \I_k$. On the other hand, by the triangle inequality and (\ref{eqn:directform}) we have \[ |\nu_N(\xi)| \leqsim 2^{-Ak} + \left(\max_{1 \leq i \leq m} |\xi_i| |P_i(2^k)| \right)^{-B} \] for all $N \in \I_k$, where $A,B>0$ are the implicit constants written explicitly. On choosing $B>0$ sufficiently small but depending only on $A,P_1,\dotsc,P_m$, we can ensure that \[ 2^{-Ak} \leq \left(\max_{1 \leq i \leq m} |\xi_i| |P_i(2^k)| \right)^{-B} \] for all $\xi \in \T^m$ and $N \in \I_k$. Thus \begin{equation} \label{eqn:nuNbound} |\nu_N(\xi)| \leqsim \min \left( \max_{1 \leq i \leq m} |\xi_i| |P_i(2^k)| , \left(\max_{1 \leq i \leq m} |\xi_i| |P_i(2^k)| \right)^{-O(1)} \right). \end{equation}

We can now use Lemma \ref{lem:general2shortbound} to prove (\ref{eqn:shortell2bound}). Since $|\I_k| \leqsim 2^{-k}$, it suffices to prove that \[ \sup_{\xi \in \mathrm{supp} \eta_{j+k}} |\nu_N(\xi)| \leqsim 2^{-O(|j|)} \] for all $N \in \I_k$. This is trivial if $j+k \leq 0$. If $j \geq 0$ then by the first bound in (\ref{eqn:nuNbound}) and (\ref{eqn:noepsHardybound}) we have \[ \sup_{\xi \in \mathrm{supp} \eta_{j+k}} |\nu_N(\xi)| \leqsim \max_{1 \leq i \leq m} |P_i(2^{j+k})|^{-1}|P_i(2^k)| \leqsim 2^{-O(j)} = 2^{-O(|j|)}. \] Now suppose $-k<j<0$. Then by the second bound of (\ref{eqn:nuNbound}) and (\ref{eqn:noepsHardybound}) again we have \[ \sup_{\xi \in \mathrm{supp} \eta_{j+k}} |\nu_N(\xi)| \leqsim \left( \max_{1 \leq i \leq m} |P_i(2^{j+k})|^{-1} |P_i(2^k)| \right)^{-O(1)} \leqsim 2^{O(j)} = 2^{-O(|j|)}. \] This proves (\ref{eqn:shortell2bound}).

To complete the proof of (\ref{eqn:absjbound}) in the $p=2$ case, we bound \[ \left( \sum_{k=1}^\infty \|V^2(A_N(f*\check{\eta}_{j+k}))_{N \in \I_k} \|_{\ell^2(\Z^m)}^2 \right)^\frac{1}{2} \leqsim 2^{-O(|j|)} \left( \sum_{k=1}^\infty \|f * \check{\eta}_{j+k} \|_{\ell^2(\Z^m)}^2 \right)^\frac{1}{2} \] and \[ \left( \sum_{k=1}^\infty \|f * \check{\eta}_{j+k} \|_{\ell^2(\Z^m)}^2 \right)^\frac{1}{2} = \left\| \left( \sum_{k=1}^\infty |f*\check{\eta}_{j+k}|^2 \right)^\frac{1}{2} \right\|_{\ell^2(\Z^m)} \leq \|Sf\|_{\ell^2(\Z^m)} \leqsim \|f\|_{\ell^2(\Z^m)} \] by (\ref{eqn:squarefunction}).

Now suppose $p \neq 2$. Note that \begin{equation} V^2(A_Nf)_{N \in \I_k} \leqsim \sum_{N \in \I_k} |A_{N+1}f-A_Nf| \leqsim A_{2^{k+1}}|f| \end{equation} by monotonicity of $V^r$ norms and the triangle inequality. We will use the following lemma which can be found in \cite{krausediscrete} (Lemma 3.16).

\begin{lemma} \label{lem:vectorinequality}
Given a collection $\{f_k\}_{k=1}^\infty$ of functions on $\Z$, we have \[ \left\| \left( \sum_{k=1}^\infty | A_{2^k} f_k|^2 \right)^\frac{1}{2} \right\|_{\ell^p(\Z^m)} \leqsim \left\| \left( \sum_{k=1}^\infty |f_k|^2 \right)^\frac{1}{2} \right\|_{\ell^p(\Z^m)} \] for any $p>1$.
\end{lemma}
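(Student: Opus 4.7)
The plan is to handle the $p=2$ case directly by Plancherel and to reduce the general $p>1$ case to the scalar maximal bound in Corollary \ref{cor:maximal} via Jensen's inequality and duality. I would first reduce to $f_k \geq 0$ using the pointwise domination $|A_{2^k} f_k| \leq A_{2^k}|f_k|$, valid because the kernel of $A_{2^k}$ is non-negative. For $p=2$, Plancherel together with the trivial multiplier bound $|m_{2^k;\Z}(\xi)| \leq 1$ gives $\|A_{2^k} f_k\|_{\ell^2(\Z^m)} \leq \|f_k\|_{\ell^2(\Z^m)}$, and the square of the left hand side of the lemma then equals $\sum_k \|A_{2^k} f_k\|_{\ell^2}^2 \leq \sum_k \|f_k\|_{\ell^2}^2$ by Fubini's theorem, which is the square of the right hand side.

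The main case is $p > 2$. The key observation is that, because $A_{2^k}$ is convolution with a non-negative kernel of total mass at most $1$, Cauchy--Schwarz yields the pointwise bound $(A_{2^k} f_k)^2 \leq A_{2^k}(f_k^2)$. This reduces matters to showing
\begin{align*}
\left\| \sum_k A_{2^k}(f_k^2) \right\|_{\ell^{p/2}(\Z^m)} \leqsim \left\| \sum_k f_k^2 \right\|_{\ell^{p/2}(\Z^m)},
\end{align*}
where $p/2 > 1$. Dualising against a non-negative $h \in \ell^{(p/2)'}(\Z^m)$ and transferring the averages to $h$ via adjoints, one has
\begin{align*}
\sum_k \langle A_{2^k}(f_k^2), h \rangle = \sum_k \langle f_k^2, A_{2^k}^* h \rangle \leq \Big\langle \sum_k f_k^2, \sup_j A_{2^j}^* h \Big\rangle,
\end{align*}
and H\"older's inequality followed by the scalar maximal bound $\|\sup_j A_{2^j}^* h\|_{\ell^{(p/2)'}(\Z^m)} \leqsim \|h\|_{\ell^{(p/2)'}(\Z^m)}$ then completes the argument. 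This last scalar bound is Corollary \ref{cor:maximal} applied to the commuting measure-preserving family $\{T_i^{-1}\}$, since $A_{2^k}^*$ has the same form as $A_{2^k}$ with each $T_i$ replaced by $T_i^{-1}$.

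For $1 < p < 2$, I would appeal to duality: the desired bound is equivalent to the $\ell^{p'}(\Z^m, \ell^2)$ bound for the adjoint operator $(g_k) \mapsto (A_{2^k}^* g_k)$, and since $p' > 2$ the preceding argument applies with the roles of $A_{2^k}$ and $A_{2^k}^*$ interchanged. The only subtlety to check carefully is that Corollary \ref{cor:maximal} really does apply to the adjoint family with uniform constants; this follows at once from the fact that the hypotheses defining $\P$ involve only the Hardy field functions $P_i$, so they are symmetric under the relabelling $T_i \mapsto T_i^{-1}$, and I would expect no real obstacle here.
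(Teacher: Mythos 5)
Your proof is correct and reaches the result by a genuinely different route. The paper's argument is essentially a citation: it first notes that replacing $f$ by its reflection converts $A_{2^k}$ into its adjoint, so the scalar maximal inequality of Corollary \ref{cor:maximal} holds for both $A_{2^k}$ and $A_{2^k}^*$, and then invokes Lemma 3.16 of \cite{krausediscrete} as a black box to upgrade this to the vector-valued bound. You instead give a self-contained proof in the classical Fefferman--Stein style: the pointwise Jensen/Cauchy--Schwarz domination $(A_{2^k} f_k)^2 \leq A_{2^k}(f_k^2)$ (valid because the kernel of $\tilde{A}_N$ is non-negative with total mass at most $1$), followed by dualizing the resulting $\ell^{p/2}$ estimate against $h \in \ell^{(p/2)'}$, passing to $A_{2^k}^*h$, dominating by the maximal function $\sup_j A_{2^j}^* h$, and finishing with H\"older and the scalar maximal bound for the adjoint family; the range $1<p<2$ then follows by $\ell^p(\ell^2)$--$\ell^{p'}(\ell^2)$ duality, interchanging the roles of $A_{2^k}$ and $A_{2^k}^*$. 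Both approaches ultimately rest on the same two ingredients (Corollary \ref{cor:maximal} for the operator and, via reflection on $\Z^m$, for its adjoint), so neither is logically stronger; what yours buys is transparency and independence from the cited lemma, at the cost of a couple of extra paragraphs. One small remark: the adjoint maximal bound is most directly seen, as the paper does, from the identity $\E_{n\in[N]} f(x + (\lfloor P_1(n)\rfloor,\dotsc,\lfloor P_m(n)\rfloor)) = A_N g(-x)$ with $g(x) = f(-x)$ on $\Z^m$, rather than by re-running Corollary \ref{cor:maximal} for the abstract family $\{T_i^{-1}\}$, though both amount to the same thing.
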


\begin{proof}
We observe that \[ \E_{n \in [N]} f(x+(\lfloor P_1(n) \rfloor,\dotsc,\lfloor P_m(n) \rfloor)) = A_Ng(-x) \] where $g(x)=f(-x)$. Thus by the maximal inequality we have already obtained (by Theorem \ref{thm:integervariation} and Corollary \ref{cor:maximal}), we have \[ \left\| \sup_{k \in \N} |\E_{n \in [2^k]} f(\cdot+(\lfloor P_1(n) \rfloor,\dotsc,\lfloor P_m(n) \rfloor))| \right\|_{\ell^p(\Z^m)} = \|\sup_{k \in \N} |A_{2^k}g| \|_{\ell^p(\Z^m)} \leqsim \|g\|_{\ell^p(\Z^m)} = \|f\|_{\ell^p(\Z^m)} \] for any $p>1$. The result follows by Lemma 3.16 of \cite{krausediscrete}.
\end{proof}

To handle (\ref{eqn:absjbound}) we bound \[ \left\| \left( \sum_{k=1}^\infty V^2(A_N(f*\check{\eta}_{j+k}))_{N \in \I_k}^2 \right)^\frac{1}{2} \right\|_{\ell^p(\Z^m)} \leqsim \left \| \left( \sum_{k=1}^\infty (A_{2^k}|f*\check{\eta}_{j+k}|)^2 \right)^\frac{1}{2} \right\|_{\ell^p(\Z^m)}. \] By Lemma \ref{lem:vectorinequality} this is bounded above by \[ \left\| \left( \sum_{k=1}^\infty |f*\check{\eta}_{j+k}|^2 \right)^\frac{1}{2} \right\|_{\ell^p(\Z^m)} \leq \|Sf\|_{\ell^p(\Z^m)} \leqsim \|f\|_{\ell^p(\Z^m)}. \] This concludes the proof of (\ref{eqn:absjbound}) and hence of (\ref{eqn:shortvariationbounded}). Theorem \ref{thm:integerfullvariation} follows.

\printbibliography
\end{document}